\numberwithin{equation}{section}
\newtheorem{thm}[equation]{Theorem}
\newtheorem{lem}[equation]{Lemma}
\newtheorem{prop}[equation]{Proposition}
\newtheorem{cor}[equation]{Corollary}
\newtheorem{bad-conj}[equation]{False Conjecture}
\theoremstyle{definition}
\newtheorem{defn}[equation]{Definition}
\theoremstyle{remark}
\newtheorem{ex}[equation]{Example}
\newtheorem{rem}[equation]{Remark}
\newcommand{\m}{\mathfrak{m}}
\newcommand{\rto}{\to}
\renewcommand{\L}{\mathcal{L}}
\newcommand{\LL}{\mathbb{L}}
\newcommand{\RR}{\mathbb{R}}
\newcommand{\A}{\mathcal{A}}
\DeclareMathOperator{\Mod}{Mod}
\newcommand{\p}{\mathfrak{p}}
\renewcommand{\O}{\mathcal{O}}
\newcommand{\bs}{\setminus}
\newcommand{\Set}{\mathbf{Set}}
\newcommand{\F}{\mathbb{F}}
\newcommand{\xra}{\xrightarrow}
\def\Tor{\mathrm{Tor}}
\def\Ext{\mathrm{Ext}}
\def\Hom{\mathrm{Hom}}
\def\Z{\mathbb{Z}}
\def\Q{\mathbb{Q}}
\def\F{\mathbb{F}}
\def\leq{\leqslant}
\def\geq{\geqslant}
\def\call{\mathcal{L}}
\def\ra{\rightarrow}
\def\HH{\mathsf{HH}}
\def\THH{\mathsf{THH}}
\def\Sh{\mathsf{Sh}}
\def\Shukla{\mathsf{Sh}}
\def\ie{\emph{i.e.}}
\def\eg{\emph{e.g.}}
\def\id{\mathrm{id}}
\def\ba#1\ea{\begin{align*}#1\end{align*}}
\def\[#1\]{\begin{align*}#1\end{align*}}
\begin{document}
\title[Splittings and calculational techniques for higher
$\THH$]{Splittings and calculational techniques for higher $\THH$} 
\author[Bobkova]{Irina Bobkova}
\address{School of Mathematics, Institute for Advanced Study,
  Princeton, NJ 08540, USA} 
\email{ibobkova@ias.edu}
\author[H\"oning]{Eva H\"oning}
\address{Max-Planck-Institut f\"ur Mathematik, Vivatsgasse 7, 53111
  Bonn, Germany}
\email{hoening@mpim-bonn.mpg.de}
\author[Lindenstrauss]{Ayelet Lindenstrauss}
\address{Mathematics Department, Indiana University, 831 East Third Street,
  Bloomington, IN 47405, USA}
\email{alindens@indiana.edu}
\author[Poirier]{Kate Poirier}
\address{Mathematics Department, New York City
College of Technology, CUNY, 300 Jay Street, Brooklyn, NY 11201, USA}
\email{kpoirier@citytech.cuny.edu}
\author[Richter]{Birgit Richter}
\address{Fachbereich Mathematik der Universit\"at Hamburg,
Bundesstra{\ss}e 55, 20146 Hamburg, Germany}
\email{birgit.richter@uni-hamburg.de}
\author[Zakharevich]{Inna Zakharevich}
\address{Department of Mathematics, Cornell University, 
Malott Hall 587, Ithaca, NY 14853, USA}
\email{zakh@math.cornell.edu}
\date{\today}

\keywords{higher topological Hochschild homology, higher Shukla homology}
\subjclass[2000]{Primary 18G60; Secondary 55P43}
\begin{abstract}
Tensoring finite pointed simplicial sets $X$ with commutative ring
spectra $R$ yields important homology theories such as (higher)
topological Hochschild homology and torus homology. We prove several
structural properties of these constructions relating $X \otimes (-)$
to $\Sigma X \otimes (-)$ and we establish splitting results. This allows
us, among other important examples, to determine $\THH^{[n]}_*(\Z/p^m;
\Z/p)$ for all $n \geq 1$ and for all $m \geq 2$. 
\end{abstract}
\thanks{}
\maketitle

\section*{Introduction}
For any (finite)
pointed simplicial set $X$ one can define the tensor product of $X$
with a commutative ring spectrum $A$, $X \otimes A$, where the case
$X=S^1$ gives topological Hochschild homology of $A$. More generally,
for any sequence 
of maps of commutative ring spectra $R \ra A \ra C$ we can define
$\L^R_X(A; C)$, the Loday construction with respect to $X$ of
$A$ over $R$ with coefficients in $C$. Important examples are $X =
S^n$ or $X$ a torus. The construction specializes to $X \otimes A$ in the 
case $\L_X^S(A; A)$. For details see Definition \ref{def:loday}. 

An important question about the Loday construction concerns the
dependence on $X$: 
Given $X, Y\in s\Set_*$, with $\Sigma X \simeq \Sigma Y$, does that imply that
$\L^R_X(A) \simeq \L^R_Y(A)$?  If it does,  the Loday construction would be a
``stable invariant''. Positive cases arise from the work of Berest,
Ramadoss and Yeung \cite[Theorem 5.2]{BRY}: They identify the homotopy groups of
the Loday construction over an $X \in s\Set_*$ of a Hopf algebra over
a field with representation homology of the Hopf algebra with respect
to $\Sigma(X_+)$. In \cite{DT} Dundas and Tenti prove that stable
invariance holds if $A$ is a smooth algebra over a commutative ring $k$. However in
\cite{DT} they also provide a counterexample: $\L^{H\Q}_{S^2 \vee S^1 \vee
  S^1}(H\Q[t]/t^2)$ is \emph{not} equivalent to $\L^{H\Q}_{S^1 \times
S^1}(H\Q[t]/t^2)$ even though $\Sigma(S^2 \vee S^1 \vee
  S^1) \simeq \Sigma(S^1 \times
S^1)$. Our juggling formula (Theorem \ref{thm:juggling}) and our
generalized Brun splitting (Theorem \ref{thm:brun}) relate the Loday
construction on $\Sigma X$ to that of $X$. One application among  
others of these results is to establish stable invariance in certain examples. 

For commutative $\F_p$-algebras $A$ one often observes a splitting of
$\THH(A)$ as $\THH(\F_p) \wedge_{H\F_p} \THH^{H\F_p}(HA)$, so
$\THH(A)$ splits 
as topological Hochschild homology of $\F_p$ tensored with the
Hochschild homology 
of $A$ \cite{ll2}. It is natural to ask in which generality such
splittings occur. If one replaces $\F_p$ by
$\Z$, then there are many counterexamples. For instance if
$A=\mathcal{O}_K$ is a number ring then $\THH_*(\mathcal{O}_K)$ is known
by \cite[Theorem 1.1]{LM} and is far from being equivalent to $\pi_*(\THH(\Z)
\wedge^L_{H\Z} \THH^{H\Z}(H\mathcal{O}_K))$ in general (see
\cite[Remark 4.12.]{hhlrz} for a concrete example). We prove several splitting
results for higher $\THH$ and use one of them to determine higher
$\THH$ of $\Z/p^m$ with $\Z/p$-coefficients for all $m \geq 2$.

\subsection*{Content}

We start with a brief recollection of the Loday construction in
Section \ref{sec:loday}. 

In \cite{blprz} we determined the higher Hochschild homology of
$R=\F_p[x]$ and of 
$R = \F_p[x]/x^{p^\ell}$, both
Hopf algebras. In Section \ref{sec:truncated} we generalize our results
to the cases $R = \F_p[x]/x^{m}$ if $p$ divides $m$, which is not a
Hopf algebra unless $m=p^\ell$ for some $\ell$.

Building on work in \cite{hhlrz} we prove a juggling formula (see
Theorem \ref{thm:juggling}): For every sequence of cofibrations of
commutative ring spectra  
$S  \ra R \ra A \ra B \ra C$ there is an equivalence
\[ \L^R_{\Sigma X}(B; C) \simeq \L^R_{\Sigma X}(A;
C) \wedge^L_{\L_X^A(C)} \L_X^B(C).\]
We explain in \ref{subsec:phony} what happens if one oversimplifies
this formula.

Using a geometric argument, Brun \cite{brun} constructs a spectral
sequence for calculating $\THH_*$-groups. We prove a 
generalization of his splitting and show that for any sequence of
cofibrations of commutative ring spectra 
$S \ra R \ra A \ra B$ we obtain a generalized spectrum-level Brun splitting 
(see Theorem \ref{thm:brun}) 
\ie, an equivalence of commutative $B$-algebra spectra 
\[ \L^R_{\Sigma X}(A; B) \simeq B \wedge^L_{\L^R_X(B)}  \L_X^A(B). \]
Note that $B$, which only appears at the basepoint on the left, now appears
almost everywhere on the right.  This splitting also gives rise to
associated spectral sequences for calculating higher
$\THH_*$-groups. 

We apply our results to prove a generalization of Greenlees' splitting formula
\cite[Remark 7.2]{greenlees}: 
For an augmented commutative $k$-algebra $A$ we obtain in Corollary
\ref{cor:augmented} that  
$$ \L_{\Sigma X}(HA; Hk) \simeq \L_{\Sigma X}(Hk)
\wedge^L_{Hk} \L_X^{HA}(Hk)$$ 
and if $A$ is flat as a $k$-module then this can also be written as
$$ \L_{\Sigma X}(HA; Hk) \simeq \L_{\Sigma X}(Hk)
\wedge^L_{Hk} \L^{Hk}_{\Sigma X}(HA; Hk),$$ 
where all the Loday constructions are over the same simplicial set.
For $X=S^n$, for example, and $A$ a flat augmented commutative
$k$-algebra this yields  Theorem \ref{prop:genGreenlees},
$$ \THH^{[n]}(A; k) \simeq \THH^{[n]}(k) \wedge^L_{Hk}
\THH^{[n], k}(A; k)$$
where  $\THH^{[n]}=\L_{S^n}$. 

Shukla homology is a derived version of Hochschild homology. We define
higher order Shukla homology in Section \ref{sec:shukla} and calculate
some examples that will be used in subsequent results. We prove that
the Shukla homology of order $n$ of a ground ring $k$ over a flat
augmented $k$-algebra is isomorphic to the 
reduced Hochschild homology of
order $n+1$ of the flat  augmented algebra (Proposition \ref{prop:hhsh}). 

Tate shows \cite{Tate} how to control Tor-groups for certain quotients
of regular local rings. We use this to develop a splitting on the
level of homotopy groups for $\THH(R/(a_1, \ldots, a_r); R/\mathfrak{m})$ if $R$
is regular local, $\mathfrak{m}$ is the maximal ideal and the $a_i$'s are in
$\mathfrak{m}^2$. 

We prove a splitting result for $\THH^{[n]}(R/a, R/p)$ in Section
\ref{sec:splitting}, where $R$ is a commutative ring and $p,a\in R$
are not zero divisors, 
$(p)$ is a maximal ideal, and $a \in
(p)^2$.  In this situation,
$$ \THH^{[n]}(R/a, R/p) \simeq \THH^{[n]}(R, R/p)
\wedge^L_{HR/p} \THH^{[n];R}(R/a, R/p).$$ 
In many cases the homotopy groups of the factors on the right hand
side can be completely determined. Among other important examples we
get explicit 
formulas for $ \THH^{[n]}(\Z/p^m, \Z/p)$ for 
all $n \geq 1$ and all $m \geq 2$ (compare Theorem \ref{thm:thhnzpmzp}):  
$$ \THH^{[n]}_*(\Z/p^m, \Z/p) \cong
\THH^{[n]}_*(\Z, \Z/p) \otimes_{\Z/p}
\THH_*^{[n],\Z}(\Z/p^m, \Z/p).$$ 
We know $\THH^{[n]}_*(\Z, \Z/p)$ from
\cite{dlr} and we determine $\THH_*^{[n],\Z}(\Z/p^m, \Z/p)$
explicitly for all $n$. 

This generalizes previous results
by Pirashvili \cite{pirashvili}, Brun \cite{brun}, and Angeltveit
\cite{angeltveit}   from $n = 1$ to all $n$. 

In Section \ref{sec:sums} we provide a splitting
result for commutative ring spectra of the form $A \times B$: we show in
Proposition 
\ref{prop:product}  that for any finite 
connected simplicial set $X$, we have 
  $$ \xymatrix@1{ {\L_X(A \times B)}
  \ar[r]^(0.42){\simeq} &
   {\L_X(A)\times \L_X(B). }  } $$ 

We present some sample applications of our splitting results in Section
\ref{sec:appl}:  a splitting of higher $\THH$ of ramified number rings 
with reduced coefficients (\ref{subsec:numberrings}), a version of
Galois descent for higher $\THH$ (\ref{subsec:gd}) and a calculation
of higher $\THH$ of function fields over $\F_p$ (\ref{subsec:ff}). We
close with a discussion of the case of 
higher $\THH$ of $\Z/p^m$ (with unreduced coefficients)
(\ref{subsec:thhnzpm}). 

\subsection*{Acknowledgements}
This paper grew out of a follow-up project to our \emph{Women in
  Topology I} project 
on higher order topological Hochschild homology \cite{blprz}. 
We were supported by an AIM SQuaRE grant that allowed us to meet in 2015, 2016
and 2018. AL acknowledges support by Simons Foundation grant 
359565.  BR thanks the Department of Mathematics of the Indiana
University Bloomington for invitations in 2016, 2017 and 2018. AL and
BR thank Michael Larsen for lessening their ignorance of algebraic number
theory. We thank Bj{\o}rn Dundas, Mike Mandell, and Brooke Shipley for helpful comments.

\section{The Loday construction: basic features} 
\label{sec:loday}
We recall some definitions
concerning the Loday construction and we fix notation. 

For our work we can use any good symmetric monoidal category of
spectra whose category of commutative monoids is Quillen equivalent to
the category of $E_\infty$-ring spectra, such as symmetric
spectra \cite{HSS}, orthogonal spectra \cite{mm} or $S$-modules
\cite{ekmm}. As parts of the paper require to work with a specific
model category we chose to work with the category of $S$-modules. 

Let $X$ be a finite pointed simplicial set and let $R \ra A \ra C$ be a sequence
of maps of commutative ring spectra. 

\begin{defn} \label{def:loday}
The Loday construction with respect to $X$ of
$A$ over $R$ with coefficients in $C$ is the simplicial commutative
augmented $C$-algebra spectrum $\L^R_X(A; C)$ whose $p$-simplices are 
$$ C \wedge \bigwedge_{x \in X_p \setminus *} A$$
where the smash products are taken over $R$.  
Here, $*$ denotes the basepoint of $X$ and we place a copy of $C$ at
the basepoint. As the smash product over $R$ is the coproduct in the
category of commutative $R$-algebra spectra, the simplicial structure
is straightforward: Face maps $d_i$ on $X$ induce multiplication in
$A$ or the $A$-action on $C$ if the basepoint is 
involved. Degeneracies $s_i$ on $X$ correspond to the insertion of
the unit maps $\eta_A \colon R \ra A$ over all $n$-simplices which
are not hit by $s_i\colon X_{n-1} \to X_n$.
\end{defn}

As defined above, $\L^R_X(A; C)$ is a simplicial commutative augmented
$C$-algebra spectrum. If $M$ is an $A$-module spectrum, then $\L^R_X(A; M)$ is
defined. By slight abuse of notation we won't distinguish $\L^R_X(A;
C)$ or $\L^R_X(A; M)$ from their geometric realization.

If
$X$ is an arbitrary pointed simplicial set, then we can write it as 
the colimit of its finite pointed subcomplexes and the Loday
construction with respect to $X$ can then also be expressed as the
colimit of the Loday construction for the finite subcomplexes. 

An important case is $X = S^n$. In this case we write $\THH^{[n],R}(A;
C)$ for $\L_{S^n}^R(A; C)$; this is the \emph{higher order topological
Hochschild homology of order $n$ of $A$ over $R$ with coefficients in
$C$}.

Let $k$ be a commutative ring, $A$ be a commutative $k$-algebra,
and $M$ be an $A$-module.  Then we define 
\[
  \THH^{[n],k}(A;M) := \L^{Hk}_{S^n}(HA;HM).
\]
If $A$ is flat over $k$, then $\pi_*\THH^{k}(A;M) \cong
\HH_*^{k}(HA;HM)$ \cite[Theorem IX.1.7]{ekmm} and this also holds for
higher order Hochschild homology in the sense of Pirashvili
\cite{pira-hodge}: $\pi_*\THH^{[n],k}(A;M) \cong  HH_*^{[n],k}(A;M)$
if $A$ is $k$-flat \cite[Proposition 7.2]{blprz}. 

To avoid visual clutter, given a commutative ring $A$ and an element
$a\in A$, we write $A/a$ instead of $A/(a)$.

\section{Higher $\THH$ of truncated polynomial
  algebras} \label{sec:truncated} 
When the Loday construction is viewed as a functor on pointed simplicial sets,
it transforms homotopy pushouts of pointed simplicial sets into
homotopy pushouts of Loday constructions. In \cite[Section 3]{veen}
Veen uses this to express higher $\THH$ as a 
``topological $\Tor$'' of a lower $\THH$, that is: for any commutative
$S$-algebra $A$,
$$\THH^{[n]}(A) \simeq A \wedge^L_{\THH^{[n-1]}(A)} A.$$
This yields a spectral sequence 
$$E^2_{s, *}=\Tor_s^{\THH_*^{[n-1]}(A)}(A_*, A_*) \Rightarrow \THH_*^{[n]}(A).$$
In particular cases, this spectral sequence collapses for all $n\geq 
1$ making it possible to calculate $\THH^{[n]}_*(A)$ as iterated $\Tor$'s
of $A_*$. 
 In
\cite[Figures 1 and 2]{blprz} we had a flow chart showing the results
of iterated $\Tor$ of $\F_p$ over  
some $\F_p$-algebras with a particularly convenient form.  We can do
similar calculations over any field: 

\begin{prop} \label{prop:flowcharts}
If $F$ is a field of characteristic $p$ and $|\omega|$ is even, there
  is a flow chart as in Figure 
  \ref{fig:flowchart-one} showing the calculation of iterated $\Tor$'s
  of $F$:  If $\mathcal{A}$ is a term in the $n$th generation in the flow
  chart, then  $\Tor^{\mathcal{A}}(F,F)$ 
is the tensor product of all the terms in the $(n+1)$st generation
that arrows from $\mathcal{A}$ point to.  Here $|\rho^0y| =|y|+1$, $|\epsilon 
  z|=|z|+1$ and $|\varphi^0 z|=2+p|z|$.

\begin{figure}[H]
    \begin{center}
\begin{tikzpicture}
\node at (0,0) (1) {$F[\omega]$};
\node at (1.5,0) (2) {$\Lambda(\epsilon\omega)$};
\draw [->](1) to (2);

\node at (3.5, 0) (3) {$\Gamma(\rho^0\epsilon\omega)\cong$};
\node at (6.1,-0.15) (35) {$\bigotimes\limits_{k\geq 0} F[\rho^k\epsilon\omega]/{(\rho^k\epsilon\omega)^p}$};
\draw [->](2) to  (3);

\node at (10.5, 1) (4) {$\bigotimes\limits_{k\geq 0}\Lambda(\epsilon\rho^k\epsilon\omega)$};
\draw [->] (35) to (4);

\node at (6.5, -1.7) (5) {$\bigotimes\limits_{k\geq 0}\Gamma(\varphi^0 \rho^k\epsilon\omega)\cong$};
\node at (10, -1.7) (6) {$\bigotimes\limits_{k,i\geq 0}F[\varphi^i\rho^k\epsilon\omega]/{(\varphi^i\rho^k\epsilon\omega)^p}$};
\draw [->] (35) to (6);

\node at (13.7, 1) (7) {$\cdots$};
\draw [->] (4) to (7);
\node at (13.7, -2) (8) {$\cdots$};
\node at (13.7, -1.4) (9) {$\cdots$};
\draw [->] (6) to (8);
\draw [->] (6) to (9);
\end{tikzpicture}   
\end{center}
\caption{Iterated $\Tor$ flow chart
}\label{fig:flowchart-one}
\end{figure}
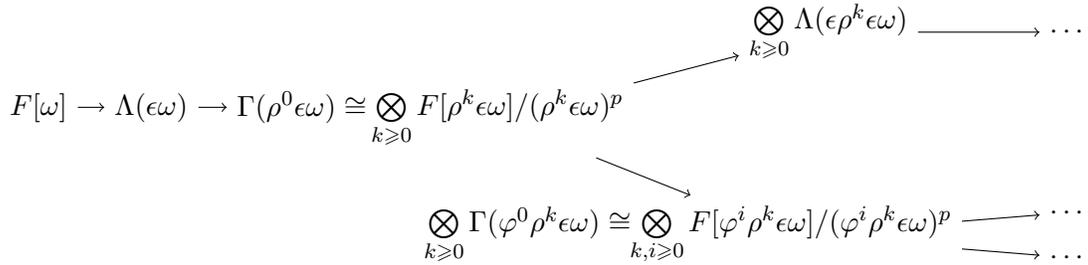

If $F$ a field of characteristic zero, the analogous flow chart for
$|x|$ even is 
  \begin{center}
    \begin{tikzpicture}
       \node at (0,0) (1) {$F[x]$};
       \node at (2,0) (2) {$\Lambda(\epsilon x)$};
       \node at (4,0) (3) {$F[\rho^0\epsilon x]$};
       \node at (6,0) (4) {$\Lambda(\epsilon \rho^0 \epsilon x)$};
       \node at (8,0) (5) {$\cdots$};
       \draw [->] (1) to (2);          
       \draw [->] (2) to (3);
       \draw [->] (3) to (4);
       \draw [->] (4) to (5);
    \end{tikzpicture} 
  \end{center}
\end{prop}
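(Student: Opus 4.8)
The plan is to reduce the proposition to four classical computations of $\Tor$ over a field together with the Künneth isomorphism, following exactly the scheme used for $F=\F_p$ in \cite{blprz}; the only new point is that each of these ingredients is available over an arbitrary field of the relevant characteristic. Concretely, for any field $F$: if $y$ has even degree then the Koszul resolution of $F$ over $F[y]$ gives an isomorphism of graded $F$-algebras
\[ \Tor^{F[y]}(F,F)\cong\Lambda(\epsilon y),\quad |\epsilon y|=|y|+1; \]
if $z$ has odd degree then the standard resolution of $F$ over $\Lambda(z)=F[z]/z^2$ gives
\[ \Tor^{\Lambda(z)}(F,F)\cong\Gamma(\rho^0 z),\quad |\rho^0 z|=|z|+1; \]
and if moreover $\mathrm{char}\,F=p$ and $y$ has even degree then the standard periodic resolution of $F$ over $F[y]/y^p$ gives
\[ \Tor^{F[y]/y^p}(F,F)\cong\Lambda(\epsilon y)\otimes_F\Gamma(\varphi^0 y),\quad |\varphi^0 y|=p|y|+2, \]
together with the algebra isomorphism $\Gamma(w)\cong\bigotimes_{k\geq 0}F[\rho^k w]/(\rho^k w)^p$ for $w$ of even degree, where $\rho^k w=\gamma_{p^k}(w)$ has degree $p^k|w|$. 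Finally, since $F$ is a field, $\Tor^{A\otimes_F B}(F,F)\cong\Tor^A(F,F)\otimes_F\Tor^B(F,F)$, and $\Tor^{(-)}(F,F)$ commutes with filtered colimits of augmented $F$-algebras.

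With these facts available I would first check that every arrow of Figure~\ref{fig:flowchart-one} is an instance of one of them: $F[\omega]\to\Lambda(\epsilon\omega)$ is the first, $\Lambda(\epsilon\omega)\to\Gamma(\rho^0\epsilon\omega)$ the second, the identification $\Gamma(\rho^0\epsilon\omega)\cong\bigotimes_k F[\rho^k\epsilon\omega]/(\rho^k\epsilon\omega)^p$ the fourth, and the two arrows leaving that node come from applying the third isomorphism to each tensor factor $F[\rho^k\epsilon\omega]/(\rho^k\epsilon\omega)^p$ and then regrouping the resulting exterior and divided-power factors, which is legitimate by the Künneth isomorphism together with the colimit statement, since in each internal degree only finitely many tensor factors are nontrivial. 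Iterating this local analysis, a $\Gamma$-node always expands into a (locally finite) tensor product of truncated polynomial algebras, each of which branches into one exterior node and one $\Gamma$-node; a $\Lambda$-node yields a single $\Gamma$-node; and a polynomial node yields a single exterior node. A short induction on the generation index then shows that the $(n+1)$st generation emanating from a node $\mathcal{A}$ equals $\Tor^{\mathcal{A}}(F,F)$: the induction step is one of the four isomorphisms, and the only thing to carry along is that, because $|\omega|$ is even, every polynomial and divided-power generator appearing has even degree and every exterior generator has odd degree, so the characteristic-$p$ formulas always apply. The same bookkeeping verifies the degree conventions $|\rho^0 y|=|y|+1$, $|\epsilon z|=|z|+1$, $|\varphi^0 z|=2+p|z|$.

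For the characteristic-zero chart the argument is identical except that the last two inputs are replaced by the single fact that $\Gamma(w)\cong F[w]$ for $w$ of even degree when $\mathrm{char}\,F=0$; there is then no branching, and the chart alternates polynomial algebras on even-degree generators with exterior algebras on odd-degree generators, as displayed: $F[x]\to\Lambda(\epsilon x)\to\Gamma(\rho^0\epsilon x)=F[\rho^0\epsilon x]\to\Lambda(\epsilon\rho^0\epsilon x)\to\cdots$.

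I do not expect a genuine obstacle here: the mathematics is entirely classical, and the real work is matching the operation notation of \cite{blprz} and organizing the bookkeeping. The two points that deserve a little care are the parity tracking that keeps the truncated-polynomial formula applicable at every $\Gamma$-node, and the interchange of $\Tor$ with the infinite tensor products $\Gamma(w)\cong\bigotimes_k F[\rho^k w]/(\rho^k w)^p$, which is justified by the filtered-colimit remark.
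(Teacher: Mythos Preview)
Your proposal is correct and follows essentially the same approach as the paper: reduce to the classical $\Tor$ computations for polynomial, exterior, and truncated-polynomial algebras (the ``resolutions of \cite[Section 2]{blprz}'' taken over $F$), together with the splitting $\Gamma(w)\cong\bigotimes_k F[\rho^k w]/(\rho^k w)^p$ in characteristic $p$ and the isomorphism $\Gamma(w)\cong F[w]$ in characteristic $0$. You spell out more of the bookkeeping (parity tracking, K\"unneth, filtered colimits for the infinite tensor products) than the paper's two-paragraph proof does, but the underlying argument is the same.
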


\begin{proof}
  In characteristic $p$, all divided power algebras split as tensor products of
  truncated polynomial algebras.  This allows us to use the resolutions of
  \cite[Section 
  2]{blprz}, where the tensor products in the respective bar constructions are all taken to be over $F$, to pass from each stage to the next.  
  
  In characteristic
  $0$, the $\Tor$ dual of an exterior algebra is a divided power algebra, but
  this is isomorphic to a polynomial algebra.  Thus the resolutions of \cite[Section
  2]{blprz}, with the tensor products in the bar constructions again taken to be over $F$,  can be used analogously to get the alternation between exterior and polynomial algebras.
\end{proof}

Let $x$ be a generator of even non-negative degree. In \cite[Theorem
8.8]{blprz} we calculated higher $\HH$ of truncated polynomial rings of the form
$\F_p[x]/x^{p^\ell}$ for any prime $p$. The decomposition due to
B\"okstedt which is described before the statement of the theorem
there does not work for $\F_p[x]/x^m$ when $m$ is not a power of $p$,
but we can nevertheless use a similar kind of argument to determine
higher $\HH$ of $\F_p[x]/x^m$ as long as $p$ divides $m$.    This
generalization of \cite[Theorem 8.8]{blprz} is  interesting because if
$m$ is not a power of $p$,  $\F_p[x]/x^m$  is no longer a Hopf
algebra, which the cases we discussed in \cite{blprz} were.   

In the following $\HH^{[n]}_*$ will denote Hochschild
homology groups of order $n$ whereas $\HH^{[n]}$ denotes the corresponding
simplicial object whose homotopy groups are $\HH^{[n]}_*$. 

\begin{thm}
Let $x$ be of even degree and let $m$ be a positive integer divisible by $p$.
 Then for all $n \geq 1$ 
$$ \HH^{[n],\F_p}_*(\F_p[x]/x^m) \cong \F_p[x]/x^m \otimes B''_n(\F_p[x]/x^m),$$
where 
$B''_1(\F_p[x]/x^m)\cong  \Lambda_{\F_p}(\varepsilon
  x) \otimes
\Gamma_{\F_p}(\varphi^0x)$,  with $|\varepsilon x| = |x| + 1$, $|\varphi^0x| = 2
+m|x|$ and 
$$B''_n(\F_p[x]/x^m)\cong \Tor_{*,*}^{B''_{n-1}(\F_p[x]/x^m)}(\F_p, \F_p).$$
\end{thm}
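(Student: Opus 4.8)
The plan is to mimic the argument from \cite[Theorem 8.8]{blprz} for $\F_p[x]/x^{p^\ell}$, replacing the B\"okstedt-type Hopf-algebra decomposition (which is unavailable when $m$ is not a prime power) by a direct construction of a small resolution. The key observation is that even without the coalgebra structure, $\F_p[x]/x^m$ as an $\F_p$-algebra admits an explicit free resolution of $\F_p$: the standard periodic resolution $\cdots \to \F_p[x]/x^m \xrightarrow{x} \F_p[x]/x^m \xrightarrow{x^{m-1}} \F_p[x]/x^m \xrightarrow{x} \F_p[x]/x^m \to \F_p$. Taking $\Tor^{\F_p[x]/x^m}_{*,*}(\F_p,\F_p)$ from this resolution yields $\Lambda_{\F_p}(\varepsilon x)\otimes \Gamma_{\F_p}(\varphi^0 x)$ with the stated degrees $|\varepsilon x|=|x|+1$ and $|\varphi^0 x|=2+m|x|$ — this is the $n=1$ computation, i.e.\ the identification of $B''_1$, and it is where the hypothesis $p\mid m$ enters (it guarantees the divided power structure appears rather than an honest polynomial algebra, and makes the Koszul-type signs and the $\Gamma$ vs.\ polynomial distinction work out; in characteristic $p$ the divided power algebra further splits as a tensor product of truncated polynomial algebras, as in Proposition~\ref{prop:flowcharts}).

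Next I would set up the inductive step. Using Veen's description recalled in Section~\ref{sec:truncated}, $\THH^{[n]}(A)\simeq A\wedge^L_{\THH^{[n-1]}(A)}A$, and the analogous statement for Hochschild homology over $\F_p$, one gets a spectral sequence with $E^2$-page $\Tor^{\HH^{[n-1],\F_p}_*(\F_p[x]/x^m)}_{*,*}(\F_p[x]/x^m,\F_p[x]/x^m)$. The strategy is to show this collapses at $E^2$ and that there are no extension problems, so that $\HH^{[n],\F_p}_*(\F_p[x]/x^m)\cong \Tor^{\HH^{[n-1],\F_p}_*}(\F_p[x]/x^m,\F_p[x]/x^m)$, and then to peel off the $\F_p[x]/x^m$ factor. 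Concretely, writing $\HH^{[n-1],\F_p}_*(\F_p[x]/x^m)\cong \F_p[x]/x^m\otimes B''_{n-1}$, one uses base change to rewrite the relevant $\Tor$ as $\F_p[x]/x^m\otimes \Tor^{B''_{n-1}}_{*,*}(\F_p,\F_p)$, which gives $B''_n\cong \Tor^{B''_{n-1}}_{*,*}(\F_p,\F_p)$ and hence the product formula for all $n$ by induction. The collapse and the splitting of the $\F_p[x]/x^m$ tensor factor are exactly what \cite[Section 2]{blprz} handles for prime powers via explicit bar resolutions over $F$; since those resolutions only used the algebra structure (tensor products taken over the ground field), the same resolutions apply verbatim here.

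The main obstacle will be justifying that the Veen spectral sequence collapses and has trivial multiplicative extensions in the non-Hopf-algebra case. For $\F_p[x]/x^{p^\ell}$ the Hopf structure was used (via B\"okstedt's decomposition) precisely to control this; without it one must argue directly. The plan is to handle this by producing, at each stage, an explicit small free resolution — the iterated bar construction of \cite[Section 2]{blprz} with all tensor products over $\F_p$ — whose homology is already the claimed answer $\F_p[x]/x^m\otimes B''_n$, so that the spectral sequence computing $\Tor$ degenerates because the resolution is minimal in the appropriate bigraded sense (the differentials land in strictly lower filtration or vanish for degree reasons, the generators $\varepsilon$, $\varphi^i$, $\rho^k$ having the degrees recorded in Proposition~\ref{prop:flowcharts} and \cite[Section 2]{blprz}). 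A secondary point to check is that $p\mid m$ is used only in the base case: once $B''_1$ has the stated form, every subsequent $\Tor$ is purely formal (iterated $\Tor$ of $\F_p$ over tensor products of exterior, divided-power and truncated-polynomial algebras), so the induction goes through uniformly. I would close by remarking that the degrees $|\varphi^0 x| = 2+m|x|$ (rather than $2+p|x|$ as in the prime-power case with $m=p$) are dictated by the length-$(m-1)$ gap in the periodic resolution of $\F_p$ over $\F_p[x]/x^m$.
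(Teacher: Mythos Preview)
Your inductive step (for $n\geq 2$) is essentially the paper's argument, but your base case has a real gap. The periodic resolution
\[
\cdots \to \F_p[x]/x^m \xrightarrow{\,x^{m-1}\,} \F_p[x]/x^m \xrightarrow{\,x\,} \F_p[x]/x^m \to \F_p
\]
is a resolution of $\F_p$ as an $A$-module (with $A=\F_p[x]/x^m$), so what you compute from it is $\Tor^{A}_{*,*}(\F_p,\F_p)$. By the classical identity $\HH^k_*(A;k)\cong\Tor^A_*(k,k)$ for augmented $k$-algebras, this is $\HH^{[1],\F_p}_*(A;\F_p)$, the Hochschild homology with \emph{reduced} coefficients; it is indeed $\Lambda_{\F_p}(\varepsilon x)\otimes\Gamma_{\F_p}(\varphi^0 x)$, and this holds for \emph{every} $m>1$, independent of whether $p\mid m$ (this is exactly Proposition~\ref{prop:reduced}). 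What the theorem requires at $n=1$ is the unreduced statement $\HH^{[1],\F_p}_*(A)\cong A\otimes B''_1$, i.e.\ $\Tor^{A\otimes A}_*(A,A)$, and your argument does not compute that. Nor can you recover it from your K\"unneth/base-change step: that step uses that $A$, as a module over $A\otimes B''_{n-1}$, has the form $A\otimes_{\F_p}\F_p$ with $B''_{n-1}$ acting through its augmentation---true for $n\geq 2$ because $B''_{n-1}$ lives in positive homological degree---but for $n=1$ one would need $A$ over $A\otimes A$ (via multiplication) to decompose as $A\otimes_{\F_p}\F_p$, and it does not.

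Consequently your localization of the hypothesis $p\mid m$ is misplaced. The paper establishes the base case with the small \emph{bimodule} resolution of \cite[(1.6.1)]{ll}, which computes $\HH^{\F_p}_*(A)$ as the homology of
\[
\cdots \xrightarrow{\,0\,} \Sigma^{m|x|}A \xrightarrow{\,\Delta(x,x)\,} \Sigma^{|x|}A \xrightarrow{\,0\,} A,
\]
with $\Delta(x,x)=mx^{m-1}$. The condition $p\mid m$ is exactly what kills this differential and yields $A\otimes\Lambda\otimes\Gamma$; when $(p,m)=1$ the answer is genuinely different (Remark~\ref{rem:pprimetom}). The paper then uses the explicit map $G.$ of \cite{ll} to identify the multiplicative structure, embedding $\Lambda\otimes\Gamma$ into the standard Hochschild complex via
\[
\varepsilon x\mapsto 1\otimes x - x\otimes 1,\qquad \varphi^0 x\mapsto \sum_{i_0=1}^{m}\sum_{j_0=0}^{1}(-1)^{1+j_0}x^{i_0-j_0}\otimes x^{m-i_0}\otimes x^{j_0}.
\]
Once the $n=1$ case is in hand this way, your inductive step and the paper's coincide.
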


\smallskip 
Since $ \F_p[x]/x^m$ is monoidal over $\F_p$, this gives a higher
$\THH$ calculation, 
\begin{align} \label{eq:splittr}
\begin{split}
\THH^{[n]}_*(\F_p[x]/x^m)  &\cong \THH^{[n]}_*(\F_p) \otimes
\HH^{[n],\F_p}_*(\F_p[x]/x^m) \\
& \cong B^n_{\F_p}(\mu) \otimes  \F_p[x]/x^m \otimes
B''_n(\F_p[x]/x^m)
\end{split}
\end{align}
where $B^1_{\F_p}(\mu) \cong \F_p[\mu]$ with $|\mu|=2$ and
$B^n_{\F_p}(\mu) = \Tor_{*,*}^{B^{n-1}_{\F_p}(\mu)}(\F_p, \F_p)$ for
$n > 1$ (see \cite[Remark 3.6]{dlr}).

\begin{proof}
We use the standard resolution \cite[(1.6.1)]{ll} and get that the Hochschild
homology of $\F_p[x]/x^m$ is the homology of the complex
$$ \xymatrix@1{
{\ldots} \ar[r]^(0.3){0} & {\Sigma^{m|x|}\F_p[x]/x^m}
\ar[rr]^{\Delta(x,x)} & &
{\Sigma^{|x|} \F_p[x]/x^m} \ar[r]^{0} & {\F_p[x]/x^m.}}$$
Since $p$ divides $m$, we have $\Delta(x,x) = mx^{m-1} \equiv 0$ so the above
differentials are all trivial and
$$\HH^{\F_p}_*(\F_p[x]/x^m) \cong \F_p[x]/x^m \otimes
\Lambda_{\F_p}(\varepsilon x) \otimes \Gamma_{\F_p}(\varphi^0x)$$
at least as an $\F_p[x]/x^m$-module, with $|\varepsilon x| = |x| + 1$,
$|\varphi^0x| = 2 +m|x|$.  The map $G.$ from \cite[equation
(1.8.6)]{ll} embeds this small complex quasi-isomorphically with its
stated multiplicative structure into the standard Hochschild complex
for $\F_p[x]/x^m$.  It sends  $\F_p[x]/x^m$ to itself in degree zero
of the Hochschild complex and 
$$\varepsilon x\mapsto 1\otimes x-x\otimes 1,\quad \varphi^0x \mapsto
\sum_{i_0=1}^m\sum_{j_0=0}^1 (-1)^{1+j_0} x^{i_0-j_0}\otimes x^{m-i_0}
\otimes x^{j_0}$$ 
which generate an exterior and divided power subalgebra, respectively,
inside the standard Hochschild complex equipped with the shuffle
product.  The map $G.$ is  shown in \cite{ll} to be half of a chain
homotopy equivalence between the small complex and the standard
Hochschild complex.  So we get that  $\HH^{\F_p}_*(\F_p[x]/x^m)
=\HH^{[1],\F_p}_*(\F_p[x]/x^m) $ has the desired form as an algebra and
sits as a deformation retract inside the standard complex calculating it. 

For the higher $\HH_*$-computation we use that the $E^2$-term of the
spectral sequence for $\HH^{[2]}_*$
is $$ E^2_{*,*} = \Tor^{\F_p[x]/x^m \otimes
\Lambda_{\F_p}(\varepsilon x) \otimes
\Gamma_{\F_p}(\varphi^0x)}(\F_p[x]/x^m, \F_p[x]/x^m)$$
and as the generators $\varepsilon x$ and $\varphi^0x$ come from
homological degree one and two, the module structure of $\F_p[x]/x^m$
over $\Lambda_{\F_p}(\varepsilon x)$ and $\Gamma_{\F_p}(\varphi^0x)$
factors over the augmentation to $\F_p$. Therefore the above Tor-term splits as
\begin{equation}\label{eqn:form}
 \F_p[x]/x^m \otimes \Tor_{*,*}^{\Lambda_{\F_p}(\varepsilon
  x)}(\F_p, \F_p) \otimes
\Tor_{*,*}^{\Gamma_{\F_p}(\varphi^0x)}(\F_p, \F_p). 
\end{equation}

Now we can argue as in \cite{blprz} to show that there cannot be any
differentials or 
extensions in this spectral sequence: although we are calculating the
homology of the total complex of the bisimplicial $\F_p$-vector space
of the bar construction $B(\F_p[x]/x^m, \HH^{\F_p}(\F_p[x]/x^m),
\F_p[x]/x^m)$ which involves both vertical and horizontal boundary
maps, we can map the bar construction  
$$B(\F_p[x]/x^m, \F_p[x]/x^m \otimes
\Lambda_{\F_p}(\varepsilon x) \otimes \Gamma_{\F_p}(\varphi^0x), \F_p[x]/x^m)$$
quasi-isomorphically into it, and the latter complex involves only
non-trivial horizontal maps (all vertical differentials vanish) and
has homology exactly equal to the algebra in equation
(\ref{eqn:form}).   When all the vertical differentials in the
original double complex are zero, there can be no nontrivial spectral
sequence differentials  $d^r$ for $r\geq 2$.  Also, the trivial
vertical differentials mean that there can be no nontrivial extensions
involving anything but the $i$th and $(i+1)$st filtration, but since
we can produce explicit generators whose $p$th powers (in the even
dimensional case) or squares (in the odd dimensional case) actually
vanish, we do not need to worry about extensions at all.    
Thus we obtain the claim about
$\HH_*^{[2],\F_p}(\F_p[x]/x^m)$.

An iteration of this argument yields the result for higher
Hochschild homology, since now we only have exterior algebras on
odd-dimensional classes and truncated algebras, truncated at the
$p$'th power, on even-dimensional ones where the powers that vanish do
so for  combinatorial reasons not relating to the power of $x$ that
was truncated at in the original algebra.  At each stage, the tensor
factor $\F_p[x]/x^m$ will split off the $E^2$-term for 
degree reasons.  What remains will be the $\Tor$ of $\F_p$ with itself over a
differential graded algebra that can be chosen up to chain homotopy
equivalence to be a graded algebra $A$ with a zero differential which
is moreover guaranteed by the flow chart to have the property that
$B(\F_p, A, \F_p)$ is chain homotopy equivalent to its homology
embedded as a subcomplex with trivial differential inside it. 
 
The splitting for higher $\THH$ follows from the splitting for higher
$\HH$ by arguing as in 
\cite[6.1]{blprz} (following \cite[Theorem 7.1]{hm}) for 
the abelian pointed monoid $\{1, x, \ldots, x^{m-1}, x^m=0\}$.
\end{proof}

Reducing the coefficients via the augmentation simplifies things even
further. Here, the result does not depend on the $p$-valuation of $m$,
because $x$ augments to zero and therefore Hochschild
homology of $\F_p[x]/x^m$ with coefficients in $\F_p$ is the homology
of the complex 
$$ \xymatrix@1{
{\ldots} \ar[r]^(0.3){0} & {\Sigma^{m|x|}\F_p}
\ar[rr]^{\Delta(x,x) = 0} & &
{\Sigma^{|x|} \F_p} \ar[r]^{0} & {\F_p.}}$$
Thus we obtain the following result.  
\begin{prop} \label{prop:reduced}
For all primes $p$ and for all $m > 1$ 
$$\HH^{[n],\F_p}_*(\F_p[x]/x^m; \F_p) \cong {B}''_n(\F_p[x]/x^m)$$
where ${B}''_1(\F_p[x]/x^m) \cong \Lambda_{\F_p}(\varepsilon
  x) \otimes \Gamma_{\F_p}(\varphi^0x)$ and
${B}''_n(\F_p[x]/x^m) =
\Tor^{{B}''_{n-1}(\F_p[x]/x^m)}_{*,*}(\F_p, \F_p)$
for $n > 1$. Therefore we obtain 
\begin{equation} \label{eq:splittrred}
\THH^{[n]}_*(\F_p[x]/x^m; \F_p) \cong \THH^{[n]}_*(\F_p) \otimes
{B}''_n(\F_p[x]/x^m).
\end{equation}
\end{prop}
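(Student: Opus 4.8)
The plan is to follow the exact same strategy as in the proof of the preceding theorem, but starting from the reduced Hochschild complex, which is now identical for all $m > 1$ because $x$ maps to zero under the augmentation. First I would record the base case $n = 1$: the standard resolution \cite[(1.6.1)]{ll} computes $\HH^{\F_p}_*(\F_p[x]/x^m; \F_p)$ as the homology of the complex with terms $\Sigma^{(km)|x|}\F_p$ (and intermediate odd-degree shifts) whose differentials are multiplication by $\Delta(x,x)$ evaluated after augmentation, hence all zero — so $\HH^{[1],\F_p}_*(\F_p[x]/x^m;\F_p) \cong \Lambda_{\F_p}(\varepsilon x) \otimes \Gamma_{\F_p}(\varphi^0 x)$ with the stated degrees, and this is $B''_1(\F_p[x]/x^m)$ by definition. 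As in the theorem, I would invoke the map $G.$ of \cite[(1.8.6)]{ll} to see that this small complex sits inside the standard Hochschild complex as a deformation retract with the correct (exterior $\otimes$ divided power) multiplicative structure, so that the identification is one of algebras.

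Next I would run the inductive step exactly as before. The $E^2$-term of Veen's (equivalently, the bar construction) spectral sequence for $\HH^{[n]}_*$ with $\F_p$-coefficients is $\Tor^{B''_{n-1}(\F_p[x]/x^m)}(\F_p, \F_p)$, which by definition is $B''_n(\F_p[x]/x^m)$; I would argue as in \cite{blprz} and in the theorem above that this spectral sequence has no differentials and no extension problems. The key input is that at each stage $B''_{n-1}(\F_p[x]/x^m)$ is, up to chain homotopy equivalence, a graded algebra with zero differential that is a tensor product of exterior algebras on odd classes and $p$-truncated polynomial algebras on even classes, and — by the flow chart of Proposition \ref{prop:flowcharts} (over $F = \F_p$, $|\omega|$ even) — has the property that $B(\F_p, A, \F_p)$ is chain homotopy equivalent to its homology embedded as a subcomplex with trivial differential. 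This forces all $d^r$ ($r \geq 2$) to vanish, and the explicit generators whose $p$th powers (even case) or squares (odd case) vanish for combinatorial reasons kill any potential extensions. Iterating gives $\HH^{[n],\F_p}_*(\F_p[x]/x^m;\F_p) \cong B''_n(\F_p[x]/x^m)$ for all $n \geq 1$.

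Finally, the higher $\THH$ statement \eqref{eq:splittrred} follows from the $\HH$ computation by the same argument as in \cite[6.1]{blprz}, following \cite[Theorem 7.1]{hm}, applied to the pointed abelian monoid $\{1, x, \ldots, x^{m-1}, x^m = 0\}$: one gets $\THH^{[n]}_*(\F_p[x]/x^m; \F_p) \cong \THH^{[n]}_*(\F_p) \otimes \HH^{[n],\F_p}_*(\F_p[x]/x^m; \F_p) \cong \THH^{[n]}_*(\F_p) \otimes B''_n(\F_p[x]/x^m)$.

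I expect there to be essentially no new obstacle: the only thing that changes relative to the preceding theorem is the disappearance of the $\F_p[x]/x^m$ tensor factor (coefficients are reduced) and the fact that the vanishing of $\Delta(x,x)$ no longer needs $p \mid m$. The one point requiring a little care is making sure the degrees of the generators $\varepsilon x$ and $\varphi^0 x$ are genuinely independent of the $p$-valuation of $m$ — they are, since $|\varphi^0 x| = 2 + m|x|$ depends on $m$ but not on its valuation, and the combinatorial vanishing of higher powers in subsequent generations is, as already emphasized in the theorem's proof, unrelated to which power of $x$ was truncated. So the proof is a streamlined reprise of the previous one, and I would present it as such rather than repeating all the spectral-sequence bookkeeping.
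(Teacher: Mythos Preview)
Your proposal is correct and takes essentially the same approach as the paper: the paper's proof is precisely the streamlined reprise you describe, invoking the previous theorem's method via \cite{blprz}. The only cosmetic difference is that the paper embeds the generators directly in the bar complex $B(\F_p, \F_p[x]/x^m, \F_p)$ via $\varepsilon x \mapsto 1 \otimes x \otimes 1$ and $\varphi^0 x \mapsto 1 \otimes x^{m-1} \otimes x \otimes 1$ (which generate exterior and divided power subalgebras regardless of the divisibility of $m$), rather than reducing the map $G.$ of \cite{ll} from the unreduced case.
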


This is shown as in the proof of the previous theorem using the method
of \cite{blprz}, embedding 
$$\varepsilon x\mapsto 1\otimes x\otimes 1,\quad 
\varphi^0x \mapsto 1\otimes x^{m-1} \otimes x\otimes 1$$
inside the bar complex $B(\F_p, \F_p[x]/x^m,\F_p)$, where they
generate exterior and divided power algebras, respectively, regardless
of the divisibility of $m$. 

\begin{rem} \label{rem:pprimetom}
Note that the calculation  becomes drastically different if $(p,m)=1$
and we look at the full $\HH^{[n],\F_p}_*(\F_p[x]/x^m)$ rather than reducing
coefficients to get $\HH^{[n],\F_p}_*(\F_p[x]/x^m;\F_p)$. Then
multiplication by $m$ is an isomorphism on $\F_p[x]/x^m$-modules and
hence 
\[ \HH^{\F_p}_*(\F_p[x]/x^m) \cong 
\begin{cases} \F_p[x]/x^m, & \text{ for } *=0,
  \\
(\Sigma^{|x|(km+1)}\F_p[x]/x^m)/x^{m-1}, & \text{ for } * = 2k+1, \\
\Sigma^{km|x|}\text{ker}(\cdot x^{m-1}), & \text{ for } * = 2k, k > 0.
\end{cases}\]
\end{rem}

\section{A juggling formula}
In this section we generalize juggling formulas from \cite[\S 2 and \S
3]{hhlrz}: we allow working relative to a ring spectrum $R$ that can
be different from the sphere spectrum and we relate the Loday
construction on a suspension on a pointed simplicial set $X$ to the Loday
construction on $X$. In \cite{hhlrz} we mainly considered the cases
where $X$ is a sphere. 

\begin{lem} \label{lem:coequ}
Let $X$ be a pointed simplicial set. For a sequence of cofibrations of
commutative $S$-algebras
\[S \longrightarrow R \longrightarrow A \longrightarrow B \longrightarrow C\]
there is an equivalence of augmented commutative $C$-algebras.
\[\L^A_X(B;C) \simeq C \wedge^L_{\L^R_X(A;C)} \L_X^R(B;C).\]
\end{lem}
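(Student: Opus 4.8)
The plan is to identify both sides with the Loday construction of a suitable pushout of commutative $R$-algebras and then invoke the fact (recalled in Section~\ref{sec:truncated}) that $\L_X^R(-)$ sends pushouts of commutative ring spectra to pushouts. First I would recall that $\L^A_X(B;C)$ means the Loday construction of $B$ over $A$ with coefficients in $C$, i.e.\ its $p$-simplices are $C \wedge_A \bigwedge_{x \in X_p \setminus *} B$ with the smash products taken over $A$. The key algebraic observation is that, for a fixed pointed finite simplicial set $X$, one has, levelwise in $p$, a pushout square of commutative $R$-algebra spectra
\[
\xymatrix{
\L^R_X(A;C)_p \ar[r] \ar[d] & C \ar[d] \\
\L^R_X(B;C)_p \ar[r] & \L^A_X(B;C)_p,
}
\]
where the left vertical map is induced by $A \to B$, the top map is the augmentation $\L^R_X(A;C) \to C$ (collapsing each $A$-factor to $C$ via $A \to C$), and the bottom and right maps are the evident structure maps. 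Concretely, $\L^A_X(B;C)_p = C \wedge_A \bigwedge^A_{X_p\setminus *} B$, and one rewrites each smash factor over $A$ using $B \wedge_A C \simeq B \wedge_R C \wedge_{A \wedge_R C} C$ and more generally the base-change identity $\bigwedge^A_{X_p \setminus *} B \;\simeq\; \bigl(\bigwedge^R_{X_p \setminus *} B\bigr) \wedge_{\bigwedge^R_{X_p \setminus *} A} R$; smashing with $C$ over the copy of $A$ (resp.\ $R$) at the basepoint then produces exactly $\L^R_X(B;C)_p \wedge_{\L^R_X(A;C)_p} C$.

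The steps, in order, would be: (1) set up the simplicial commutative $R$-algebra objects $\L^R_X(A;C)$, $\L^R_X(B;C)$, and the constant object $C$, together with the maps forming the span $\L^R_X(B;C) \leftarrow \L^R_X(A;C) \to C$; (2) check that the cofibration hypotheses $S \to R \to A \to B \to C$ guarantee the relevant smash products are homotopy pushouts, so that the span is a diagram of cofibrant commutative $R$-algebras and the homotopy pushout is computed by the honest (derived) relative smash product; (3) verify the levelwise pushout square above by the base-change manipulation of iterated smash products indicated, being careful that the copy of $C$ sitting at the basepoint is exactly what makes the corner $\L^A_X(B;C)_p$ rather than something larger; (4) observe that taking homotopy pushouts commutes with geometric realization of simplicial commutative ring spectra, so the levelwise identification assembles to the claimed equivalence $\L^A_X(B;C) \simeq C \wedge^L_{\L^R_X(A;C)} \L^R_X(B;C)$ of augmented commutative $C$-algebras, the augmentation on the right being induced from the augmentations of the two factors over $\L^R_X(A;C)$.

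The main obstacle I anticipate is Step~(3): making the base-change identity $\bigwedge^A_{X_p\setminus *} B \simeq \bigl(\bigwedge^R_{X_p\setminus *} B\bigr) \wedge_{\bigwedge^R_{X_p\setminus *} A} R$ and its coefficient-twisted version genuinely rigorous at the point-set level in $S$-modules, i.e.\ tracking cofibrancy so that all smash products are automatically derived and the pushout squares are homotopy pushouts. One must check that the maps $\L^R_X(A;C) \to \L^R_X(B;C)$ and $\L^R_X(A;C) \to C$ can be arranged to be cofibrations (or at least that one of them is), which follows from the cofibration hypotheses on $S \to R \to A \to B \to C$ together with the fact that $\L^R_X(-)$ preserves cofibrations of commutative ring spectra; the combinatorics of the simplicial set $X$ only enter through the bookkeeping of which factor is a copy of $A$ versus $B$ versus $C$ and plays no essential role beyond that. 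Everything else — the compatibility of the squares with face and degeneracy maps, and the interchange of homotopy colimits — is formal once the levelwise statement is in place.
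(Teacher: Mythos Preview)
Your approach is essentially the paper's: both prove the lemma by a levelwise identification of $\L^A_X(B;C)_p$ with the pushout $C \wedge_{\L^R_X(A;C)_p} \L^R_X(B;C)_p$, the paper writing this out as an explicit map of coequalizer diagrams (and checking it is an isomorphism by reducing to a trivial square in $C$) while you package the same computation as a base-change identity for iterated smash products. One correction: your displayed formula should read $\bigwedge^A_{X_p\setminus *} B \simeq \bigl(\bigwedge^R_{X_p\setminus *} B\bigr) \wedge_{\bigwedge^R_{X_p\setminus *} A} A$, with $A$ (via the multiplication map $\bigwedge^R A \to A$) rather than $R$ on the right---there is no map $\bigwedge^R A \to R$ in general---and with that fix the argument goes through; the paper's version is slightly leaner because it exhibits a strict isomorphism of coequalizers and so sidesteps the cofibrancy bookkeeping you flag in Step~(3).
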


\begin{proof}
For the duration of this proof smash products are formed over $R$, not
$S$, but we still denote them by $\wedge$ in order to simplify
notation. For finite $X$ the unit map of $C$, $\eta_C\colon R \ra C$,
induces a map of coequalizer diagrams
$$ 
\xymatrix@C=5em{
\displaystyle{C \wedge \left(\bigwedge_{X_n \setminus *} A\right) \wedge
\left(\bigwedge_{X_n \setminus *} B \right) \ar@<1ex>[d]^{\nu_R}}
\ar@<-1ex>[d]_{\nu_L} \ar[r]^-{\id 
\wedge \eta_C \wedge \eta_C} &  \displaystyle{{C \wedge \left(C \wedge
    \bigwedge_{X_n \setminus *} A\right) \wedge 
\left(C \wedge \bigwedge_{X_n\bs*} B \right)}} \ar@<1ex>[d]^{n_R}
\ar@<-1ex>[d]_{n_L}\\
\displaystyle{{C \wedge \left(\bigwedge_{X_n \setminus *} B \right)}}
\ar[r]^-{\id \wedge \eta_C}  & \displaystyle{C \wedge \left(C \wedge
    \bigwedge_{X_n \bs *} B \right)}
}
$$ 
Here, $\nu_L$ sends the copies of $A$ to $C$ first and then multiplies 
$C \wedge \left(\bigwedge_{X_n \setminus *} C\right)$ to $C$
whereas $\nu_R$ sends the copies of $A$ to $B$ and then uses the
multiplication in each coordinate separately.
The map $n_L$ sends the copies of $A$ to $C$ and multiplies 
$C \wedge \left(C \wedge \bigwedge_{X_n \setminus *} C\right)$ to $C$
sitting at the basepoint 
whereas $n_R$ sends the copies of $A$ to $B$ and then uses the
multiplication in $B$ in every coordinate. This morphism of
coequalizer diagrams induces an isomorphism $\eta$ on the corresponding
coequalizers, \ie, 
$$\eta \colon (\call_X^A(B;C))_n \longrightarrow  (C \wedge_{\call^R_X(A;C)}
\call^R_X(B;C))_n.$$
This is an isomorphism because the diagram
$$ 
\xymatrix@C=5em{
  C
  \ar@<0.5ex>[d]^=
\ar@<-0.5ex>[d]_{=} \ar[r]^-{\id 
\wedge \eta_C \wedge \eta_C} & C\wedge C \wedge C
\ar@<0.5ex>[d]^{\mu\wedge \id} 
\ar@<-0.5ex>[d]_{\id \wedge \mu}\\
C
\ar[r]^-{\id \wedge \eta_C}  & C \wedge C}
$$
induces the identity map between the coequalizers. A colimit argument
proves the claim for general $X$. 
\end{proof}

\begin{thm}[Juggling Formula] \label{thm:juggling}
Let $X$ be a 
pointed simplicial set. Then for any sequence of cofibrations of 
commutative $S$-algebras $S \ra R \ra A \ra B \ra C$ we get an equivalence of 
augmented commutative $C$-algebras
$$ \call_{\Sigma X}^R(B; C) \simeq \call_{\Sigma X}^R(A; C) 
\wedge^L_{\call_X^A(C)} \call_X^B(C).$$
\end{thm}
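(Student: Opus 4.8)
The plan is to derive the juggling formula from Lemma \ref{lem:coequ} by exploiting the pushout decomposition of a suspension. Recall that $\Sigma X$ can be written as the homotopy pushout of $* \leftarrow X \rightarrow *$ in $s\Set_*$ (that is, $\Sigma X = CX \cup_X CX$ with both cones contractible), so since the Loday construction sends homotopy pushouts of pointed simplicial sets to homotopy pushouts of commutative algebra spectra, for any sequence $R \to D \to E$ we get
\[
\L^R_{\Sigma X}(D; E) \simeq E \wedge^L_{\L^R_X(D; E)} E,
\]
the two-sided bar construction $B(E, \L^R_X(D;E), E)$. This is precisely the kind of identity Veen uses in Section \ref{sec:truncated}; the point at the basepoint is $E$, and $CX$ contributes a contractible factor so each copy of $\L^R_{CX}(D;E)$ collapses to $E$.

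First I would apply this to the two outer terms of the display we want to prove. On the left-hand side, $\L^R_{\Sigma X}(B;C) \simeq C \wedge^L_{\L^R_X(B;C)} C$. For the first factor on the right-hand side, $\L^R_{\Sigma X}(A;C) \simeq C \wedge^L_{\L^R_X(A;C)} C$. So the claimed equivalence becomes
\[
C \wedge^L_{\L^R_X(B;C)} C \;\simeq\; \left(C \wedge^L_{\L^R_X(A;C)} C\right) \wedge^L_{\L_X^A(C)} \L_X^B(C).
\]
Next I would bring in Lemma \ref{lem:coequ}, which gives $\L^A_X(B;C) \simeq C \wedge^L_{\L^R_X(A;C)} \L^R_X(B;C)$ and, in the special case $B = C$ (using $S \to R \to A \to C \to C$), $\L^A_X(C) \simeq C \wedge^L_{\L^R_X(A;C)} \L^R_X(C)$. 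The natural strategy is then to rewrite the outer bar construction $C \wedge^L_{\L^R_X(B;C)} C$ as an iterated/relative bar construction over the intermediate ring $\L^R_X(A;C)$: the unit maps induce a sequence of commutative algebra maps $\L^R_X(A;C) \to \L^R_X(B;C) \to C$, and there is a base-change identity $C \wedge^L_{\L^R_X(B;C)} C \simeq C \wedge^L_{\L^R_X(A;C)} \bigl(C \wedge^L_{\L^R_X(A;C) \wedge^L_{?} \L^R_X(B;C)} \cdots\bigr)$ — more cleanly, one uses that $\L^R_X(B;C)$ is, by Lemma \ref{lem:coequ} applied with coefficients, an algebra built from $\L^R_X(A;C)$, so that the two-sided bar constructions can be combined. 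Concretely I expect to identify both sides with $B(C, \L^R_X(A;C), \L^R_X(B;C)) \wedge^L_{\L^A_X(C)} \cdots$ after repeatedly applying Lemma \ref{lem:coequ} and the pushout formula, and also the degenerate instance $\L_X^B(C) \simeq C \wedge^L_{\L_X^B(C)} C$-type rewrites where appropriate.

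The main obstacle will be the bookkeeping in chaining the base-change/bar-construction identities so that the associativity of the derived smash product is applied legitimately — in particular, verifying that the various $\L^?_X(-;-)$ appearing really do form the commutative-algebra tower $\L^R_X(A;C) \to \L^R_X(B;C) \to C$ and $\L^A_X(C) \to \L^B_X(C)$, compatibly, so that one may write $C \wedge^L_{\L^R_X(B;C)} C \simeq \bigl(C \wedge^L_{\L^R_X(A;C)} C\bigr) \wedge^L_{C \wedge^L_{\L^R_X(A;C)} \L^R_X(B;C)} \bigl(C \wedge^L_{\L^R_X(B;C)} \L^R_X(B;C)\bigr)$ and recognize the middle and right terms via Lemma \ref{lem:coequ} as $\L^A_X(C)$ and $\L^A_X(B;C)$ respectively. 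Making this rigorous in the model category of $S$-modules requires knowing the relevant objects are cofibrant enough that the point-set smash products compute the derived ones; since the hypotheses are stated in terms of cofibrations of commutative $S$-algebras, and the Loday construction of a cofibration is again suitably cofibrant (as in \cite{hhlrz}), this should go through, but it is where the care is needed. An alternative, cleaner route I would also try: prove the formula directly at the simplicial-set level by observing $\Sigma X = \Sigma X \vee_{*} \ast$ is not enough — instead use that $\L_{\Sigma X}$ fits into the pushout over $\L_X$, and combine with Lemma \ref{lem:coequ} applied "twice" to collapse everything in one diagram chase, which avoids iterated base change entirely.
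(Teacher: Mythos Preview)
Your overall strategy---decompose $\Sigma X$ as a double cone and combine with Lemma~\ref{lem:coequ}---is the right one, but the execution has a concrete error and, even once corrected, lands on a different formula than the one stated.

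The error is in your identification of the terms in
\[
C \wedge^L_{\L^R_X(B;C)} C \;\simeq\; \bigl(C \wedge^L_{\L^R_X(A;C)} C\bigr)\; \wedge^L_{\,C \wedge^L_{\L^R_X(A;C)} \L^R_X(B;C)}\;\bigl(C \wedge^L_{\L^R_X(B;C)} \L^R_X(B;C)\bigr).
\]
By Lemma~\ref{lem:coequ} the middle term is $\L^A_X(B;C)$, not $\L^A_X(C)$, and the right term is just $C$. So what your base-change identity actually proves is
\[
\L^R_{\Sigma X}(B;C)\;\simeq\;\L^R_{\Sigma X}(A;C)\wedge^L_{\L^A_X(B;C)} C,
\]
which is a valid equivalence but is \emph{not} the juggling formula. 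To bridge the gap you would need a further application of Lemma~\ref{lem:coequ} (for $A\to B\to C\to C$) to rewrite $\L^B_X(C)\simeq \L^A_X(C)\wedge^L_{\L^A_X(B;C)} C$ and then cancel $\L^A_X(C)$ against itself; but that last manoeuvre is precisely the kind of module-structure rearrangement that can go wrong---you must verify that the $\L^A_X(B;C)$-action on $\L^R_{\Sigma X}(A;C)$ really factors through $\L^A_X(C)$, which is not automatic from the displayed identities alone (cf.\ the paper's own warning in \S\ref{subsec:phony}).

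The paper sidesteps all of this with a single $3\times 3$ diagram of commutative $C$-algebras whose row pushouts are $\L^R_{\Sigma X}(A;C)$, $\L^A_X(C)$, $\L^B_X(C)$ and whose column pushouts are $C$, $\L^R_X(B;C)$, $C$; commuting the two iterated pushouts gives the result in one stroke. The ingredient you are missing is the appearance of $\L^R_X(C)$ (the Loday construction with $C$ at every point) in the left column of the middle and bottom rows: it is what makes the row pushouts come out as $\L^A_X(C)$ and $\L^B_X(C)$ directly via Lemma~\ref{lem:coequ}, rather than as $\L^A_X(B;C)$ and $C$. Your ``alternative, cleaner route'' at the end is pointing in the right direction; the $3\times 3$ grid is the diagram chase you are looking for.
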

\begin{proof}
Consider the diagram
$$ \xymatrix{
  {C} & {\call^R_X(A;C)} \ar[r] \ar[l]& {C}\\
  {\call^R_X(C)} \ar[u] \ar[d] & {\call^R_X(A; C)} \ar[r] \ar[l] \ar[u]
  \ar[d] & {C} \ar[u] \ar[d] \\
  {\call^R_X(C)} & {\call^R_X(B;C)} \ar[r] \ar[l] & {C.} }$$ 
By Lemma 
\ref{lem:coequ}, taking the homotopy pushouts of the rows produces the diagram
$$ \xymatrix{
{\call^R_{\Sigma X}(A; C)} \\
{\call_X^A(C)} \ar[u] \ar[d]\\
{\call_X^B(C)}
}$$
whose homotopy pushout is 
$$ \call^R_{\Sigma X}(A; C) \wedge^L_{\call_X^A(C)} \call_X^B(C).$$

We get an equivalent result by first taking the homotopy pushouts 
of the columns and then of the rows. Homotopy pushouts on the columns produces 
$$ \xymatrix{
{C \wedge^L_{\call^R_X(C)} \call^R_X(C)} & 
{\call^R_X(A;C) \wedge^L_{\call^R_X(A;C)} \call^R_X(B;C)} \ar[l] \ar[r] & 
{C \wedge_C^L C}
}$$
which simplifies to
$$ \xymatrix{
{C} & 
{\call^R_X(B;C) } \ar[l] \ar[r] & 
{C}
}$$
whose homotopy pushout is equivalent to $\call^R_{\Sigma X}(B; C)$.
\end{proof}

Restricting our attention to spheres we obtain the following
result. This is a relative variant of \cite[Theorem 3.6]{hhlrz}. 
\begin{cor}\label{cor:juggling}
  Let $S \ra R \ra A \ra B \ra C$ be a sequence of cofibrations of commutative
  $S$-algebras.  Then for all $n \geq 0$ there is an equivalence of 
  augmented commutative $C$-algebras:
$$\THH^{[n+1], R}(B; C) \simeq \THH^{[n+1], R}(A; C) 
\wedge^L_{\THH^{[n], A}(C)} \THH^{[n], B}(C).$$
\end{cor}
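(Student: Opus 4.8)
The plan is to obtain this as the special case $X = S^n$ of the Juggling Formula (Theorem~\ref{thm:juggling}). The sequence $S \ra R \ra A \ra B \ra C$ of cofibrations of commutative $S$-algebras is exactly the input that theorem requires, so the only thing to do is to identify, for $X = S^n$, the four Loday constructions occurring in Theorem~\ref{thm:juggling} with the four higher $\THH$-terms in the statement.

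First I would record that $\Sigma S^n \simeq S^{n+1}$ in $s\Set_*$, where $\Sigma$ denotes reduced simplicial suspension. With the model $S^n = (S^1)^{\wedge n}$ for $S^1 = \Delta^1/\partial\Delta^1$ (and $S^0$ the two-point pointed set) one has in fact $\Sigma S^n = S^1 \wedge S^n = (S^1)^{\wedge(n+1)} = S^{n+1}$ on the nose. For any other model of the spheres one instead invokes the fact that $\L^R_{(-)}(A;C)$ takes weak equivalences of pointed simplicial sets to equivalences — it is assembled as a homotopy colimit of smash powers of the cofibrant objects in the given cofibration sequence — so in all cases $\L^R_{\Sigma S^n}(-) \simeq \L^R_{S^{n+1}}(-) = \THH^{[n+1],R}(-)$.

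Now I would substitute $X = S^n$ into Theorem~\ref{thm:juggling}. Recalling the defining equation $\THH^{[k],T}(-) = \L^T_{S^k}(-)$ of higher topological Hochschild homology and the convention $\L^R_X(A) = \L^R_X(A;A)$, the left-hand side $\L^R_{\Sigma S^n}(B;C)$ equals $\THH^{[n+1],R}(B;C)$, the first right-hand factor $\L^R_{\Sigma S^n}(A;C)$ equals $\THH^{[n+1],R}(A;C)$, and the derived smash product is formed over $\L^A_{S^n}(C) = \THH^{[n],A}(C)$ between that factor and $\L^B_{S^n}(C) = \THH^{[n],B}(C)$. This is precisely the asserted equivalence, and since Theorem~\ref{thm:juggling} produces an equivalence of augmented commutative $C$-algebras, so does the corollary. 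The case $n = 0$ is included: there $\Sigma S^0 \cong S^1$ and the statement reads $\THH^{[1],R}(B;C) \simeq \THH^{[1],R}(A;C) \wedge^L_{C \wedge^L_A C} (C \wedge^L_B C)$. No genuine obstacle arises beyond this bookkeeping; the sole ingredient not literally contained in Theorem~\ref{thm:juggling} is the homotopy invariance of $\L^R_{(-)}(A;C)$ in the simplicial-set variable used to pass from $\Sigma S^n$ to $S^{n+1}$, and even this can be avoided by using the smash-power model of the spheres.
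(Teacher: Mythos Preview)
Your proposal is correct and is exactly the approach the paper takes: the corollary is obtained by setting $X = S^n$ in Theorem~\ref{thm:juggling} and using $\Sigma S^n \simeq S^{n+1}$ together with the definition $\THH^{[k],T}(-) = \L^T_{S^k}(-)$. The paper does not even spell this out as a formal proof, merely remarking that the corollary follows by ``restricting our attention to spheres''; your write-up is, if anything, more careful than the original.
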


\begin{rem}
  The previous corollary gives a splitting of the same form as \cite[Theorem
  3.6]{hhlrz}.  However, as the proof is different it is not obvious that the
  maps in the smash product are the same.  Thus (unlikely as it may be) it may
  turn out to be the case that this gives two different but similar-looking
  splittings.
\end{rem}

\subsection{Beware the phony right-module structure!} \label{subsec:phony}

In some cases it is tempting to use the (valid) splitting of
$\THH^{[n+1],R}(A;C)$ as $\THH^{[n+1],R}(A) \wedge_A C$ and
oversimplify the juggling formula we got in Corollary \ref{cor:juggling} to the
\textbf{invalid} identification  
of $\THH^{[n+1], R}(B; C)$ with $\THH^{[n+1], R}(A) \wedge_A (C
\wedge^L_{\THH^{[n], A}(C)} \THH^{[n], B}(C))$ 
which in the case $B=C$  becomes
\begin{equation} \label{eq:phony} 
\THH^{[n+1], R}(A) \wedge_A \THH^{[n+1], A}(C).
\end{equation}
This transformation is incorrect because it disregards the module
structures, without which the maps of  
pushouts are not well-defined.  The spectrum 
$\THH^{[n+1],R}(A;C)$ is \emph{not}  
equivalent to $\THH^{[n+1],R}(A) \wedge_A C$ as a right-module
spectrum 
over $\THH^{[n], A}(C)$.  
Assuming that the rearrangement that leads to \eqref{eq:phony} were valid,
any cofibration of commutative $S$-algebras
$S \ra A \ra B$ would produce an equivalence between 
$\THH^{[n]}(B)$ and $\THH^{[n]}(A) \wedge^L_A \THH^{[n], A}(B)$. 
But this equivalence does \emph{not} hold in many examples, \eg,  for $A = H\Z$
and $B$ equal
to the Eilenberg Mac Lane spectrum of $\F_p$ or of the ring of
integers in a number field.

\section{A generalization of Brun's spectral sequence} 
In \cite{brun} Morten Brun uses the geometry of the circle to identify
$\THH(HQ; HQ \wedge^L_{Hk} HQ)$ with $\THH(Hk; HQ)$ where $k$ is a
commutative ring and $Q$ is a commutative $k$-algebra: $\THH(Hk; HQ)$
is a circle with $HQ$ at the basepoint and $Hk$ sitting at every
non-basepoint of $S^1$. Homotopy invariance says that we can let the
point take over half the circle, so that it covers an interval. This idea
identifies $\THH(Hk; HQ)$ with $\THH(Hk; B(HQ, HQ, HQ))$ where $B$
denotes the two-sided bar construction. Brun then 
shows in \cite[Lemma 6.2.3]{brun} that the latter is equivalent to
$\THH(HQ; B(HQ, Hk, HQ))$ by a shift of perspective. This idea
inspired our juggling formula \ref{thm:juggling} and also the following result. 
\begin{thm}[Brun Juggling] \label{thm:brun}
  Let $X$ be a pointed simplicial set.  For any sequence of cofibrations of
  commutative $S$-algebras $S \ra R \ra A \ra B$ we get an equivalence of
  commutative $B$-algebras
  \[\L^R_{\Sigma X}(A;B) \simeq B \wedge^L_{\L^R_X(B)} \L^A_X(B).\]
\end{thm}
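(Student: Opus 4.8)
The plan is to obtain Theorem~\ref{thm:brun} directly from the Juggling Formula (Theorem~\ref{thm:juggling}), applied to the tower $S\to R\to R\to A\to B$ — that is, to the given tower $S\to R\to A\to B$ with the identity $R\xrightarrow{=}R$ inserted after the ground ring. An identity map is a cofibration, so the hypotheses of Theorem~\ref{thm:juggling} are satisfied; in fact its proof already invokes Lemma~\ref{lem:coequ} for towers ending in an identity, so nothing new is needed. With this choice the conclusion of Theorem~\ref{thm:juggling} becomes
\[
 \L^R_{\Sigma X}(A;B)\;\simeq\;\L^R_{\Sigma X}(R;B)\wedge^L_{\L^R_X(B)}\L^A_X(B),
\]
so everything reduces to identifying the first smash factor. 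The key observation is that for any pointed simplicial set $Y$ and any $R$-module $M$ the simplicial object $\L^R_Y(R;M)$ is constant with value $M$: in every simplicial degree it is a smash product over $R$ of copies of the unit $R$, together with a single copy of $M$ at the basepoint, so all face and degeneracy maps act as the identity on $M$. Hence $\L^R_{\Sigma X}(R;B)\simeq B$, on which $\L^R_X(B)$ acts through its augmentation, and the displayed equivalence is precisely the statement of Theorem~\ref{thm:brun}.

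Two self-contained variants of the same computation are available if one prefers not to cite Theorem~\ref{thm:juggling} wholesale. The first is to re-run the $3\times 3$-square argument from the proof of Theorem~\ref{thm:juggling} after making the substitution above and using $\L^R_X(R;B)\simeq B$: taking homotopy pushouts along the columns of the resulting square produces the span $B\leftarrow\L^R_X(A;B)\to B$ with both legs the augmentation, whose homotopy pushout is $\L^R_{\Sigma X}(A;B)$ because $\Sigma X$ is the homotopy pushout $\ast\cup_X\ast$ of pointed simplicial sets and the Loday construction sends homotopy pushouts of pointed simplicial sets to homotopy pushouts; taking homotopy pushouts along the rows instead produces the span $B\leftarrow\L^R_X(B)\to\L^A_X(B)$, the one nontrivial row pushout being identified via Lemma~\ref{lem:coequ} applied to $S\to R\to A\to B\to B$, and its homotopy pushout is $B\wedge^L_{\L^R_X(B)}\L^A_X(B)$; since both iterations compute the homotopy colimit of the same square, they agree. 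The second variant is purely algebraic: begin with $\L^R_{\Sigma X}(A;B)\simeq B\wedge^L_{\L^R_X(A;B)}B$, factor the augmentation $\L^R_X(A;B)\to B$ through the canonical map $j\colon\L^R_X(A;B)\to\L^R_X(B)$, apply the change-of-rings identity $B\wedge^L_{\L^R_X(A;B)}B\simeq\bigl(B\wedge^L_{\L^R_X(A;B)}\L^R_X(B)\bigr)\wedge^L_{\L^R_X(B)}B$, and finish with $B\wedge^L_{\L^R_X(A;B)}\L^R_X(B)\simeq\L^A_X(B)$, again from Lemma~\ref{lem:coequ}.

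The step I expect to be the genuine obstacle is not any of the homotopy-theoretic manipulations but the bookkeeping of module structures that underlies all of them — exactly the pitfall warned about in \S\ref{subsec:phony}. The single fact that makes the argument go through, whichever route one takes, is that the augmentation $\L^R_X(A;B)\to B$ factors as $\L^R_X(A;B)\xrightarrow{j}\L^R_X(B)\xrightarrow{\mathrm{aug}}B$: collapsing the $A$-smash-factors to $B$ and then multiplying gives the same map as first replacing those factors by $B$ and then multiplying. This factorization is what makes $B$ a $\L^R_X(B)$-algebra compatibly with its $\L^R_X(A;B)$-algebra structure, and hence makes the maps between the relevant homotopy pushouts — equivalently, the change-of-rings rearrangement — well defined. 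I would also record, for completeness, that $X$ need not be connected: the identification $\Sigma X\simeq\ast\cup_X\ast$ and the homotopy-pushout property of the Loday construction hold for an arbitrary pointed simplicial set, so no extra hypothesis is required, and the resulting equivalence is manifestly one of commutative $B$-algebras.
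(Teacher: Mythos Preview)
Your proof is correct and is genuinely different from the paper's argument. You deduce Brun Juggling as a formal corollary of the Juggling Formula (Theorem~\ref{thm:juggling}) by inserting the identity $R\xrightarrow{=}R$ into the tower and then observing that $\L^R_{\Sigma X}(R;B)$ is constant with value $B$; your check that the resulting $\L^R_X(B)$-module structure on $B$ is the augmentation is exactly the point that needs verifying, and it does hold. The paper instead gives a direct geometric proof: it models $(\Sigma X,*)$ as $(CX\cup_X CX, CX)$, uses homotopy invariance of the relative Loday construction to let the coefficient $B$ swallow one cone, then applies the gluing results of \cite{hhlrz} to express the result as a smash over $\L^R_X(B)$, finally invoking Lemma~\ref{lem:coequ} to recognize the remaining factor as $\L^A_X(B)$. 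Your route is shorter and shows that Brun Juggling is not logically independent of Theorem~\ref{thm:juggling}; the paper's route is self-contained, makes the ``basepoint eats half the suspension'' picture explicit, and identifies the maps in the pushout transparently without having to trace through the $3\times3$ diagram. Your second variant (the change-of-rings rearrangement via $\L^R_{\Sigma X}(A;B)\simeq B\wedge^L_{\L^R_X(A;B)}B$ and Lemma~\ref{lem:coequ}) is in spirit closest to what the paper does, just without the cone language.
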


Note that $B$, which only appears at the basepoint on the left, now appears
almost everywhere on the right.  Thus we can think of the basepoint as having
``eaten'' most of $\Sigma X$.

In the following we will use the notation from \cite[\S 2]{hhlrz}. If
$Y$ is a pointed simplicial subset of $X$, then we denote by
$\L^R_{(X,Y)}(A, B; B)$ the relative Loday construction where we attach
$B$ to every point in $Y$ including the basepoint, $A$ to every point
in the complement and we 
use the structure maps to turn this into a  augmented commutative 
$B$-algebra spectrum. Note that if $Y = *$, then $\L^R_{(X,*)}(A, B; B) =
\L^R_{X}(A; B)$, so in this case we  omit the $*$ from the
notation as in Definition \ref{def:loday}. 
\begin{proof}
  We consider the pair $(\Sigma X, *)$ as $(CX\cup_X CX, CX)$, with the cone
  sitting as the upper half of the suspension.  Then, since the Loday
  construction is homotopy invariant,
  \begin{align*}
  \L^R_{\Sigma X}(A; B) =  \L^R_{\Sigma X, *}(A, B;B) &=
  \L^R_{CX\cup_X CX,*}(A, B;B) 
    \simeq \L^R_{CX\cup_X CX,CX}(A, B;B)  \\ &= \L^R_{CX\cup_X CX, CX\cup_X
      X}(A, B;B).
  \end{align*}
  By \cite[Proposition 2.10(b)]{hhlrz}
\[
  \L^R_{CX\cup_X CX,CX\cup_XX}(A, B;B) 
\simeq \L^R_{CX,CX}(A, B;B) \wedge_{\L^R_{X,X}(A, B;B)} \L^R_{CX,X}(A,
B;B).  \]
By definition $\L^R_{CX,CX}(A, B;B) = \L^R_{CX}(B)$ and $\L^R_X(B) =
\L^R_{X,X}(A, B;B)$ and by homotopy invariance $\L^R_{CX}(B) \simeq B$,
hence 
\[
\L^R_{CX\cup_X CX,CX\cup_XX}(A, B;B) 
\simeq B \wedge_{\L^R_X(B)} \L^R_{CX,X}(A,
B;B).
\]
Using \cite[(3.0.1)]{hhlrz} we can identify $\L^R_{CX,X}(A, B;B)$ with 
\begin{equation} \label{eq:cones}
\L^R_{CX}(A; B) \wedge^L_{\L^R_X(A;B)} \L^R_X(B;B)
\end{equation}
and as $CX$ is contractible we obtain $B \simeq \L^R_{CX}(A; B)$ and
then Lemma \ref{lem:coequ} yields an equivalence of \eqref{eq:cones} with
$\L^A_X(B)$. 
\end{proof}

\begin{ex} \label{ex:brun}
  Consider the case when $X = S^0$.
  \begin{equation} \label{eq:THHeq1}
    \THH(A;B) \simeq B\wedge_{B \wedge B}(B\wedge_A B) = \THH(B; B\wedge_A B).
  \end{equation}
  
  There is an Atiyah--Hirzebruch spectral sequence \cite[IV.3.7]{ekmm}
  \[E^2_{p,q} = \pi_p(E\wedge_R H\pi_qM) \Longrightarrow \pi_{p+q}(E\wedge_R
    M).\] Let $B$ be a connective $A$-algebra.  Setting $R = B \wedge B$,
  $E = B$ and $M = B \wedge_A B$ we get
  \[E^2_{p,q} = \pi_p(B \wedge_{B\wedge B} H \pi_q(B\wedge_A B)) \Longrightarrow
    \pi_{p+q}(B\wedge_{B\wedge B} (B\wedge_A B)).\]
  
  Setting $B = HQ$ and $A = Hk$ gives us
  \[E^2_{p,q} = \THH_p(Q; \Tor_q^k(Q,Q)) \Longrightarrow
  &\pi_{p+q}(HQ\wedge_{HQ\wedge 
      HQ}(HQ\wedge_{Hk} HQ))\\ & \cong \THH_{p+q}(k;Q)\] by
    (\ref{eq:THHeq1}).  This 
  recovers a spectral sequence with the same $E^2$ page and limit as Brun's
  \cite[Theorem 6.2.10]{brun}. A substantial generalization of Brun's spectral
  sequence for $\THH$ can be found in \cite[Theorem 1.1]{hoening}. 
\end{ex}

\begin{ex}
  We can generalize Example~\ref{ex:brun} to any $X$.  In particular, consider a
  commutative ring $k$ and a commutative $k$-algebra $Q$.  If we apply
  the Atiyah--Hirzebruch spectral sequence in the case 
  \[E = HQ  \qquad R = \L_{S^n}(HQ) \qquad M = \L_{S^n}^{Hk}(HQ)\]
  then the Brun juggling formula \ref{thm:brun} gives us a spectral squence 
  \[E^2_{p,q} = \pi_p(HQ \wedge_{\THH^{[n]}(Q)} H\THH_q^{[n],k}(Q))
    \Longrightarrow \THH_{p+q}^{[n+1]}(k;Q).\]
  In the next section, we will see that we can identify
  $\THH^{[n],k}(Q)$ with higher order Shukla homology, $\Sh^{[n],k}(Q)$,
  so we get the simpler description 
  \[E^2_{p,q} = \pi_p(HQ \wedge_{\THH^{[n]}(Q)} H(\Sh_q^{[n],k}(Q))
    \Longrightarrow \THH_{p+q}^{[n+1]}(k;Q).\]
\end{ex}

\begin{cor} \label{cor:augmented}
Let $B$ be an augmented commutative $A$-algebra spectrum. Then applying Theorem \ref{thm:juggling} to
 the sequence 
$\xymatrix@1@C=15pt{S \ar[r]^{=} & S \ar[r] & A \ar[r] & B \ar[r] &  A}$ gives
$$ \call_{\Sigma X}(B; A) \simeq \call_{\Sigma X}(A; A) \wedge^L_A 
\call_X^B(A).$$ 
In particular, if $k$ is a commutative ring,  $A = Hk$,  and $B = HQ$ for
an augmented  commutative $k$-algebra $Q$, then 
$$ \call_{\Sigma X}(HQ; Hk) \simeq \call_{\Sigma X}(Hk; Hk) \wedge^L_{Hk} 
\call_{X}^{HQ}(Hk)$$
and if $k$ is a field, then we obtain on the level of homotopy groups 
$$ \pi_*\call_{\Sigma X}(HQ; Hk) \cong \pi_*(\Sigma X \otimes Hk)
\otimes_k \pi_*(\call_X^{HQ}(Hk)).$$
\end{cor}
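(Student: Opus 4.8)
The plan is to deduce Corollary \ref{cor:augmented} directly from the Juggling Formula (Theorem \ref{thm:juggling}) by feeding it the indicated sequence of cofibrations. First I would address the only real subtlety: Theorem \ref{thm:juggling} is stated for a sequence of \emph{cofibrations} $S \ra R \ra A \ra B \ra C$, so one must check that the sequence $S \xra{=} S \ra A \ra B \xra{\varepsilon} A$ can be replaced, up to weak equivalence, by an honest sequence of cofibrations of commutative $S$-algebras; this is standard model-category machinery (factor the augmentation $B \to A$ as a cofibration followed by an acyclic fibration, and note that the Loday construction sends weak equivalences of such sequences to weak equivalences, e.g. by cofibrancy of the smash factors). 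The identity map $S \xra{=} S$ is already a cofibration, and $S \to A$, $A \to B$ are cofibrations by hypothesis. With this in place, Theorem \ref{thm:juggling} applied to $R = S$, $C = A$, gives
$$ \L_{\Sigma X}^S(B; A) \simeq \L_{\Sigma X}^S(A; A) \wedge^L_{\L_X^A(A)} \L_X^B(A).$$
Since the Loday construction over $S$ is what we abbreviate $\L$, and since $\L_X^A(A) \simeq A$ (the Loday construction of $A$ over $A$ with coefficients in $A$ is just $A$ at every simplicial level, as all smash factors are over $A$), the derived smash product over $\L_X^A(A)$ becomes a derived smash product over $A$, yielding the first displayed equivalence $\L_{\Sigma X}(B;A) \simeq \L_{\Sigma X}(A;A) \wedge^L_A \L_X^B(A)$.

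Next I would specialize. Setting $A = Hk$ for a commutative ring $k$ and $B = HQ$ for an augmented commutative $k$-algebra $Q$ (so the augmentation $Q \to k$ induces $HQ \to Hk$, making $HQ$ an augmented commutative $Hk$-algebra) gives the second displayed equivalence immediately by substitution. Finally, when $k$ is a field, I would pass to homotopy groups. Here $Hk$-modules are formal: the derived smash product $\L_{\Sigma X}(Hk;Hk) \wedge^L_{Hk} \L_X^{HQ}(Hk)$ has a strongly convergent Künneth spectral sequence with $E^2 = \Tor^{\pi_* Hk}_{*,*}\big(\pi_*\L_{\Sigma X}(Hk;Hk),\, \pi_*\L_X^{HQ}(Hk)\big)$ (by \cite[Theorem IV.4.1]{ekmm}), and since $\pi_* Hk = k$ is a field, all higher $\Tor$ terms vanish and the spectral sequence collapses to give $\pi_*\big(\L_{\Sigma X}(Hk;Hk) \wedge^L_{Hk} \L_X^{HQ}(Hk)\big) \cong \pi_*\L_{\Sigma X}(Hk;Hk) \otimes_k \pi_*\L_X^{HQ}(Hk)$. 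Identifying $\L_{\Sigma X}(Hk;Hk) = \L_{\Sigma X}^S(Hk;Hk) \simeq \Sigma X \otimes Hk$ (the case $\L_Y^S(A;A) = Y \otimes A$ from the introduction, applied to $Y = \Sigma X$ and $A = Hk$) produces the claimed formula $\pi_*\L_{\Sigma X}(HQ;Hk) \cong \pi_*(\Sigma X \otimes Hk) \otimes_k \pi_*(\L_X^{HQ}(Hk))$.

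I expect the main obstacle to be purely a bookkeeping matter rather than a conceptual one: verifying that the given sequence with a repeated object ($S \xra{=} S$) and an augmentation ($B \to A$ where $A$ has already appeared) legitimately fits the hypotheses of Theorem \ref{thm:juggling} after cofibrant replacement, and that the Loday construction is invariant under such replacement. One should also be slightly careful that $\L_X^A(A) \simeq A$ genuinely holds at the level needed (it does, since for every $p$ the $p$-simplices are $A \wedge_A \bigwedge_{x \in X_p \setminus *} A \cong A$ with all face and degeneracy maps identities or unit maps, so the realization is $A$), so that the base change of the derived smash product from $\L_X^A(A)$ to $A$ is justified. Everything else — the specialization and the Künneth collapse over a field — is routine.
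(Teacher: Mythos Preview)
Your proposal is correct and matches the paper's approach exactly: the paper states the corollary without a separate proof, treating it as an immediate consequence of applying Theorem~\ref{thm:juggling} to the sequence $S = S \to A \to B \to A$, and you have simply spelled out the routine details (cofibrant replacement, the identification $\L_X^A(A) \simeq A$, and the K\"unneth collapse over a field) that the paper leaves implicit.
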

\begin{rem}
We stress that in Corollary \ref{cor:augmented} there is a spectrum
level splitting of $\call_{\Sigma X}(HQ; Hk)$ into $\call_{\Sigma
  X}(Hk)$ smashed with an additional factor. In particular, for
$X=S^n$ higher $\THH$ of an augmented commutative $k$-algebra splits
as 
$$ \THH^{[n+1]}(Q;k) \simeq \THH^{[n+1]}(k) \wedge^L_{Hk} \THH^{[n],Q}(k).$$
Greenlees proposed a splitting result in \cite[Remark 7.2]{greenlees}:
If $k$ is a field and $Q$ is a augmented commutative  $k$-algebra, then
his results yield a splitting 
$$ \THH_*(Q; k) \simeq \THH_*(k) \otimes_k \Tor_*^Q(k,k).$$
Our result generalizes his because for $X = S^0$ the term
$\call_{S^0}^{HQ}(Hk)$ is nothing but $Hk \wedge^L_{HQ} Hk$ whose
homotopy groups are isomorphic to $\Tor_*^Q(k,k)$. We will revisit
this splitting result later in Theorem \ref{prop:genGreenlees},
relating it to higher order Hochschild homology.  
\end{rem}

\section{Higher Shukla homology} \label{sec:shukla}
Let $k$ be a commutative ring. Ordinary Shukla homology \cite{shukla}
of a $k$-algebra $A$ with coefficients in an  
$A$-bimodule $M$ can be identified with $\THH^{k}(A; M)$. We will define
higher order Shukla homology in the context of commutative algebras as
an iterated bar construction and 
identify it with $\THH^{[n],k}(A; M)$ in Proposition \ref{prop:same}. 

\begin{defn}
  Let $A$ be a commutative $k$-algebra and $B$ be a commutative $A$-algebra.  We
  define
  \[\Sh^{[0],k}(A;B) = HA \wedge^L_{Hk} HB.\]
  For $n \geq 1$ we define \emph{$n$th order Shukla homology of $A$ over $k$
    with coefficients in $B$} as
  $$ \Shukla^{[n],k}(A;B) = B^S(HB, \Shukla^{[n-1],k}(A; B), HB). $$
  where the latter is the two sided bar construction with respect to $HB$ over
  the sphere spectrum.
\end{defn}

Thus for $n = 1$ we have $\Sh^{[1],k}(A;B) \simeq \THH^{k}(A;B)$.  For example,
when $k = \Z$, $p$ is a prime and $a = p^m$,
$$\Shukla^{\Z}_*(\Z/p^m; \Z/p) \cong \Gamma_{\Z/p}(x(m))$$
with $|x(m)| = 2$.

It is consistent to set $\Shukla^{[-1],k}(A) =Hk$. 

A priori, $\THH^{k}(A; B)$ is a simplicial spectrum and
$\Shukla^{[n],k}(A;B)$ is therefore an $n$-simplicial spectrum, but we
take iterated diagonals to get a simplicial spectrum and can then use
geometric realization to get an honest spectrum.

\begin{prop} \label{prop:shuklazzp} \label{lem:shTor} Let $R$ be a commutative
  ring and let $a,p\in R$ be elements which are not zero divisors such
  that $(p)$ is maximal and $a \in
  (p)^2$.
  Then
  \[\Sh^{[0],R}_*(R/p)  \cong \Lambda_{R/p}(\tau_1) \cong
  \Sh^{[0],R}_*(R/a;R/p), \qquad |\tau_1| = 1\] 
  and for $n \geq 2$,
  \[\Sh_*^{[n], R}(R/p) \cong \Tor_*^{\Sh_*^{[n-1],R}(R/p)}(R/p;R/p)\]
  and
  \[\Sh_*^{[n],R}(R/a;R/p) \cong \Tor_*^{\Sh_*^{[n-1],R}(R/a,R/p)}(R/p;R/p).\]
\end{prop}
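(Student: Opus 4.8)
The plan is to prove Proposition \ref{prop:shuklazzp} by separating the low-degree base case from the inductive step, and in each part transferring a known commutative-algebra computation of $\Tor$-groups to the spectrum level via the definition of higher Shukla homology as an iterated bar construction.

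First I would establish the base case $\Sh^{[0],R}_*(R/p) \cong \Lambda_{R/p}(\tau_1) \cong \Sh^{[0],R}_*(R/a;R/p)$. By definition $\Sh^{[0],R}(R/p) = HR/p \wedge^L_{HR} HR/p$ and $\Sh^{[0],R}(R/a;R/p) = HR/a \wedge^L_{HR} HR/p$, so their homotopy groups are $\Tor^R_*(R/p,R/p)$ and $\Tor^R_*(R/a,R/p)$ respectively. Since $p$ is not a zero divisor, $0 \to R \xrightarrow{\cdot p} R \to R/p \to 0$ is a free resolution, giving $\Tor^R_*(R/p,R/p) \cong \Lambda_{R/p}(\tau_1)$ with $|\tau_1|=1$; similarly $a$ is a non-zero-divisor so $0 \to R \xrightarrow{\cdot a} R \to R/a \to 0$ resolves $R/a$, and tensoring with $R/p$ gives $R/p \xrightarrow{\cdot \bar a} R/p$; because $a \in (p)^2$, the element $\bar a = 0$ in $R/p$, so this differential vanishes and $\Tor^R_*(R/a,R/p) \cong \Lambda_{R/p}(\tau_1)$ as well. (I should note that in both cases the ring structure is the expected exterior one, e.g. because these are Koszul-type resolutions; this is standard.)

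Next I would handle the inductive step for $n \geq 2$. By definition $\Sh^{[n],R}(R/p) = B^S(HR/p, \Sh^{[n-1],R}(R/p), HR/p)$, the two-sided bar construction over the sphere spectrum, whose associated spectral sequence (or, since everything in sight is an $HR/p$-algebra, the algebraic bar spectral sequence) has $E^2$-term $\Tor^{\Sh^{[n-1],R}_*(R/p)}_*(R/p,R/p)$ converging to $\Sh^{[n],R}_*(R/p)$. The crucial point is that this spectral sequence collapses with no extension problems. For $n=2$ this is because $\Sh^{[1],R}_*(R/p) \cong \Lambda_{R/p}(\tau_1)$ is an exterior algebra on a degree-one class, so $\Tor$ over it is a divided power algebra $\Gamma_{R/p}(\sigma\tau_1)$ on a degree-two class, and one can embed its generators into the bar complex over $R/p$ explicitly as in the proof of the earlier theorem on $\F_p[x]/x^m$ (and in \cite{blprz}), showing the bar construction deformation-retracts onto its homology with trivial differential — hence no differentials and no extensions. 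For larger $n$ one iterates: at each stage the algebra $\Sh^{[n-1],R}_*(R/p)$ is, up to the choice of a quasi-isomorphic formal model, a tensor product of exterior algebras on odd classes and divided power algebras (equivalently, in characteristic $p$, truncated polynomial algebras) on even classes, and this is exactly the situation handled by the flow-chart argument of Proposition \ref{prop:flowcharts} and the method of \cite[Section 2]{blprz}; so the bar spectral sequence collapses and $\Sh^{[n],R}_*(R/p) \cong \Tor^{\Sh^{[n-1],R}_*(R/p)}_*(R/p,R/p)$. The argument for $\Sh^{[n],R}_*(R/a;R/p)$ is identical, starting from the base case $\Sh^{[0],R}_*(R/a;R/p) \cong \Lambda_{R/p}(\tau_1)$, since from $n \geq 1$ onward the two sequences of algebras are built by the same bar-construction recipe and in fact become abstractly isomorphic.

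I expect the main obstacle to be the collapse-and-no-extension claim for the bar spectral sequence at each stage, i.e. showing that $\pi_*\, B^S(HR/p, \Sh^{[n-1],R}(R/p), HR/p)$ really is the algebraic $\Tor$ with no correction. The honest way to do this is to argue, as in the $\F_p[x]/x^m$ theorem above, that the relevant bar constructions over $R/p$ are formal — their underlying DGAs are quasi-isomorphic to their homology with zero differential, with the homology sitting inside as a deformation retract — so that no $d^r$ for $r \geq 2$ can be nonzero, and that the multiplicative generators can be chosen so their $p$th powers (even degree) or squares (odd degree) already vanish in the bar complex, ruling out multiplicative extensions. I would also need to be a little careful about passing between the $n$-simplicial object and its iterated diagonal, but since geometric realization commutes with diagonals up to weak equivalence this does not affect homotopy groups. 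Everything else is bookkeeping.
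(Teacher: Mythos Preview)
Your overall strategy matches the paper's: compute the base case via the Koszul resolution, then run the bar/K\"unneth spectral sequence at each stage and argue it collapses using the \cite{blprz} flow-chart. The base case is fine.

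The gap is in the inductive step, specifically in the passage you make in the parenthetical ``since everything in sight is an $HR/p$-algebra, the algebraic bar spectral sequence''. You are treating $\Sh^{[n-1],R}(R/p)$ as if it were the Eilenberg--Mac Lane spectrum of a simplicial (or dg) commutative $R/p$-algebra, so that $B^S(HR/p,\Sh^{[n-1],R}(R/p),HR/p)$ can be replaced by $H$ applied to an algebraic bar complex, to which the deformation-retract arguments of \cite{blprz} then apply. Being an $HR/p$-algebra spectrum is not enough to guarantee this: you need to know the spectrum is actually in the image of the Eilenberg--Mac Lane functor on commutative algebras, not merely that its homotopy groups have the right form. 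The paper supplies exactly this missing ingredient: it invokes \cite[Proposition 2.1]{dlr} to identify $\Sh^{[0],R}(R/p)\simeq HR/p\vee\Sigma HR/p$ as an augmented commutative $HR/p$-algebra (the square-zero extension, i.e.\ $H\Lambda_{R/p}(\tau_1)$), and then cites \cite{dlr} for the fact that once a commutative $H\F$-algebra is exhibited as $H$ of a simplicial commutative algebra, applying $B(H\F,-,H\F)$ keeps it in that image. Only after this reduction does the \cite{blprz} collapse argument apply.

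Without that step your ``formality'' claim is circular: you cannot embed generators into ``the bar complex over $R/p$'' until you know there \emph{is} such an algebraic bar complex modeling the spectrum. So add the identification of $\Sh^{[0],R}(R/p)$ (and of $\Sh^{[0],R}(R/a;R/p)$) with $H\Lambda_{R/p}(\tau_1)$ as an augmented commutative $HR/p$-algebra, and the inductive ``stay algebraic under bar'' principle from \cite{dlr}; then your argument goes through and coincides with the paper's.
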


\noindent
\textbf{Warning:} the reduction $R/a \rto R/p$ does {\Large
  \textbf{not}} induce an isomorphism $\Sh_*^{[n],R}(R/a;R/p) \rto
\Sh_*^{[n],R}(R/p)$.  By considering resolutions we can see that at
$n=0$ the induced map is the map taking $\tau_1$ to $0$.  In fact, in
Corollary~\ref{cor:sh:ap->pp} we show that the map induced by $R/a
\rto R/p$ is zero on all generators other than the $R/p$ in dimension $0$.

\begin{proof}
  We prove this by induction on $n$.  At $n = 0$,
  \[\Sh^{[0], R} (R/p; R/p) = R/p \wedge^L_{R} R/p.\]
  There is a K\"unneth spectral sequence,
  \[E^2_{s,t} = \Tor_{s,t}^R (R/p, R/p) \Longrightarrow \pi_{s+t}(R/p \wedge^L_{R}
    R/p).\]
  We have a short resolution
  \[R \xra{\cdot p} R \to R/p,\]
  so
  \[\Tor^R_{s,t}(R/p,R/p) \cong H_s(R/p \xra{0} R/p)_t \cong
    \begin{cases}
      R/p, & s = 0 = t, \\
      R/p, & s = 1, t = 0, \\
      0, & \text{ otherwise.}
    \end{cases}\]
  For degree reasons, there cannot be any differentials or
  extensions in this spectral sequence, and the product of $\tau_1$
  with itself has to vanish.  Thus $\Sh^{[0]}_*(R/p) \cong
  \Lambda_{R/p}(\tau_1)$, as 
  desired.  Note that this proof works (almost) verbatim for $\Sh_*^{[0],
    R}(R/a;R/p)$.

  By \cite[Proposition 2.1]{dlr}, as a  augmented commutative $HR/p$-algebra,
  \[\Sh^{[0],R}(R/p) \simeq HR/p \vee \Sigma HR/p \simeq
    \Sh^{[0],R}(R/a;R/p).\]
  Thus
  \[\Sh^{[1],R}(R/p) = B(HR/p, \Sh^{[0],R}(R/p), HR/p) \simeq HB(R/p,
    \Lambda_{R/p}(\tau_1), R/p).\]

In the following let $\F$ be $R/p$. 
 By \cite{blprz}, if we start with $B^{\F}(\F, \Lambda_\F(\tau_1),
 \F)$  for $\F$ a field of positive characteristic, we know that the
 spectral sequence $\Tor_{*,*}^{ \Lambda_\F(\tau_1)}(\F,\F)
 \Rightarrow H_*(B^{\F}(\F, \Lambda_\F(\tau_1), \F))$ collapses at
 $E^2$, which concludes the proof of the $n=1$ case.   We have that
 $B^{\F}(\F, \Lambda_\F(\tau_1), \F)\simeq B(\F, \Lambda_\F(\tau_1),
 \F)$ because both calculate the homology of $\F\otimes^L _{
   \Lambda_\F(\tau_1)} \F$.  Moreover, in  \cite{blprz} we show that
 if we keep applying $B^{\F}(\F, -, \F)$ to the result, having started
 with 
 $ \Lambda_\F(\tau_1)$, the spectral sequences
 $\Tor_{*,*}^{ H_*(-)}(\F,\F) \Rightarrow H_*(B^{\F}(\F,-, \F))$ will
 keep collapsing.  Since in the case of a characteristic zero field, a
 divided power algebra is isomorphic to a polynomial one, we can use
 the method of \cite{blprz}, adjusted as in the proof of Proposition
 \ref{prop:flowcharts}, to get the same result. 
  
 Finally, in \cite{dlr} we show that once we can exhibit a commutative
 $H\F$-algebra as the image of the Eilenberg Mac Lane functor on some
 simplicial algebra, we can continue doing that when we apply
 $B(\F,-,\F)$ to that algebra---once we get to the algebraic setting
 we can stay there.  This concludes the proof for the collapsing of
 the spectral sequences both for $R/p$ and for $R/a$. 
 \end{proof}

\begin{defn}
For any commutative $k$-algebra  $A$ and any commutative $A$-algebra $B$ we
define \emph{higher derived Hochschild homology of $A$ over $k$ with
  coefficients in $B$}, 
$\widetilde{\HH}^{[n],k}(A; B)$,  as
$$  \widetilde{\HH}^{[n],k}(A; B) = \THH^{[n], k}(A; B). $$
\end{defn} 
Note that $\Shukla^{[1],k}(A;B) =  \widetilde{\HH}^{[1],k}(A; B) =
\Shukla^k(A; B)$. 

\begin{prop}\label{prop:same}
There is an isomorphism 
$$ \Shukla^{[n],k}_*(A; B) \cong \widetilde{\HH}^{[n],k}_*(A; B)$$
for all $n \geq 0$. 
\end{prop}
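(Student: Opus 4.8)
The plan is to prove the stronger statement that $\Shukla^{[n],k}(A;B)$ and $\widetilde{\HH}^{[n],k}(A;B) = \THH^{[n],k}(A;B) = \L^{Hk}_{S^n}(HA;HB)$ are equivalent as commutative augmented $HB$-algebra spectra, by induction on $n$; the isomorphism on homotopy groups then follows. For the base case $n=0$, note that $S^0$ is the constant simplicial set on a two-point set, so all of its face and degeneracy maps are identities and $\L^{Hk}_{S^0}(HA;HB)$ is the constant simplicial spectrum with value $HB \wedge_{Hk} HA$; its realization is $HA \wedge^L_{Hk} HB = \Shukla^{[0],k}(A;B)$, where, as always, the smash products are read as derived.

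For the inductive step the key identity is
\[ \L^{Hk}_{S^n}(HA;HB) \simeq B^S\bigl(HB,\, \L^{Hk}_{S^{n-1}}(HA;HB),\, HB\bigr), \qquad n \geq 1. \]
To obtain it I would present $S^n = \Sigma S^{n-1}$ as the homotopy pushout in pointed simplicial sets of $* \leftarrow S^{n-1} \to *$, equivalently of the two cone inclusions $CS^{n-1} \leftarrow S^{n-1} \to CS^{n-1}$. Since the Loday construction carries homotopy pushouts of pointed simplicial sets to homotopy pushouts of commutative ring spectra --- the property recalled at the start of Section~\ref{sec:truncated}, see also \cite[Proposition~2.10(b)]{hhlrz}, with the argument unaffected by the coefficient ring placed at the basepoint --- and since $\L^{Hk}_*(HA;HB) = HB$, this gives
\[ \L^{Hk}_{S^n}(HA;HB) \simeq HB \wedge^L_{\L^{Hk}_{S^{n-1}}(HA;HB)} HB, \]
where both $HB$-module structures on the edge terms are the augmentation $\L^{Hk}_{S^{n-1}}(HA;HB) \to HB$ induced by $S^{n-1} \to *$. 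As the two-sided bar construction $B^S(HB,-,HB)$ over the sphere computes this relative smash product (see \cite{ekmm}), the displayed identity follows. Substituting the inductive hypothesis $\L^{Hk}_{S^{n-1}}(HA;HB) \simeq \Shukla^{[n-1],k}(A;B)$ and comparing with the defining recursion $\Shukla^{[n],k}(A;B) = B^S(HB,\Shukla^{[n-1],k}(A;B),HB)$ closes the induction.

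The point that needs the most care is the bookkeeping of the $HB$-module structures: one must check that the two $HB$-actions implicit in the bar-construction definition of $\Shukla^{[n],k}$ are both the canonical augmentation obtained by collapsing the underlying simplicial set, and that under the inductive equivalence these match the augmentation of $\L^{Hk}_{S^{n-1}}(HA;HB)$ coming from the pushout presentation of $S^n$. Since both cone inclusions $S^{n-1} \hookrightarrow CS^{n-1}$ are homotopic to the collapse $S^{n-1} \to *$, the left and right actions coincide and no twist is introduced. A secondary, harmless point is the passage from the a priori $n$-simplicial object defining $\Shukla^{[n],k}$ (via iterated diagonals) to a single geometric realization. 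Neither is a genuine obstacle; the only substantive input is the homotopy-pushout behaviour of the Loday construction, which is already in hand.
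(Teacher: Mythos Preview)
Your proposal is correct and follows essentially the same route as the paper: induct on $n$, use the hemisphere decomposition $S^n \simeq * \cup_{S^{n-1}} *$ to identify $\L^{Hk}_{S^n}(HA;HB)$ with the homotopy pushout $HB \wedge^L_{\L^{Hk}_{S^{n-1}}(HA;HB)} HB$, and recognize the latter as the bar construction $B^S(HB,-,HB)$. The one point the paper dwells on more explicitly than you do is why the two-sided bar construction \emph{over the sphere} computes the homotopy pushout taken in commutative $Hk$-algebras (equivalently, commutative $S$-algebras under $Hk$): the paper invokes the \cite{ekmm} model structure in which cofibrant commutative $S$-algebras have the correct underlying module homotopy type, so the pushouts in $S$-modules, commutative $S$-algebras, and commutative $S$-algebras under $Hk$ all agree. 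You gesture at this with your citation of \cite{ekmm}, so there is no genuine gap, but it is the substantive step and worth stating rather than folding into the bookkeeping about augmentations.
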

The idea for the following proof is due to Bj{\o}rn Dundas. 
\begin{proof}
By definition, the claim is true for $n=0$. For higher $n$ we
have to show that  
the two-sided bar construction $B^S(HB, \THH^{[n], k}(A; B), HB)$
is a model for  
$\THH^{[n+1], k}(A; B)$; then the claim follows by
induction. Using the decomposition of  
the $(n+1)$-sphere as two hemispheres glued along the equator
$S^n$ gives a homotopy pushout  
diagram for $\THH^{[n+1], k}(A; B)$ 
$$ \xymatrix{
{\THH^{[n], k}(A; B)} \ar[d] \ar[r] & {HB} \ar[d] \\
{HB} \ar[r] & {\THH^{[n+1], k}(A; B)}
}$$
in the model category of commutative $Hk$-algebras. The latter
category is equivalent to the  
category of commutative $S$-algebras under $Hk$. The two-sided bar
construction  
$$B^S(HB, \THH^{[n], k}(A; B), HB)$$ 
models the homotopy pushout 
$$ HB \wedge^L_{\THH^{[n], k}(A; B)} HB$$ 
in the category of $S$-modules but this is also the homotopy pushout
in the category  
of commutative $S$-algebras and in the category of commutative
$S$-algebras under $Hk$; here one can use the model structure from
\cite{ekmm}   
where cofibrant commutative $S$-algebras give the correct homotopy
type when involved in a  smash
product as an underlying $S$-module.   
\end{proof}

\begin{prop} \label{prop:stablesh}
  In the special case of a sequence of cofibrations of commutative $S$-algebras
  $R=Hk \ra Hk \ra HA \ra Hk$ with a cofibrant model of $Hk$ and an augmented
  commutative $k$-algebra $A$ we obtain
\begin{equation} \label{eq:xshukla}
\call_{\Sigma X}^{Hk}(HA; Hk) \simeq \call^{HA}_X(Hk)
\end{equation} 
for any $X$.   
\end{prop}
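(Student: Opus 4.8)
The plan is to obtain this as a degenerate instance of the Juggling Formula (Theorem~\ref{thm:juggling}); the Brun splitting (Theorem~\ref{thm:brun}) would serve just as well. The point is that the given sequence involves only copies of $Hk$ apart from $HA$, and the Loday construction of $Hk$ over $Hk$ with coefficients in $Hk$ collapses to $Hk$, so the splitting loses its non-trivial smash factor. Concretely, I would specialize the sequence $S \to R \to A \to B \to C$ occurring in Theorem~\ref{thm:juggling} to the sequence of cofibrations of commutative $S$-algebras
\[ S \lra Hk \lra Hk \lra HA \lra Hk, \]
in which the displayed map $Hk \lra Hk$ is the identity and in which $Hk$ is taken to be a cofibrant model, the remaining maps being replaced by cofibrations if necessary; this is the role of the cofibrancy hypothesis. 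The conclusion of Theorem~\ref{thm:juggling} then reads
\[ \call_{\Sigma X}^{Hk}(HA; Hk) \simeq \call_{\Sigma X}^{Hk}(Hk; Hk) \wedge^L_{\call_X^{Hk}(Hk)} \call_X^{HA}(Hk). \]

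Next I would record that for any pointed simplicial set $Y$ one has $\call^{Hk}_Y(Hk;Hk) \simeq Hk$. Indeed, by Definition~\ref{def:loday} the $p$-simplices of $\call^{Hk}_Y(Hk;Hk)$ are $Hk \wedge_{Hk} \bigwedge_{y \in Y_p \setminus *} Hk$, and since $Hk$ is the unit for $\wedge_{Hk}$ each of these equals $Hk$ with all face and degeneracy maps the identity (the cofibrancy assumption ensures these are the derived smash products); hence $\call^{Hk}_Y(Hk;Hk)$ is the constant simplicial commutative $Hk$-algebra on $Hk$. Applying this once with $Y = \Sigma X$ and once with $Y = X$ and substituting into the previous display, the right-hand side simplifies to $Hk \wedge^L_{Hk} \call_X^{HA}(Hk) \simeq \call_X^{HA}(Hk)$, which is precisely the asserted equivalence.

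I do not anticipate a genuine obstacle in this argument. The only thing needing care is the cofibrancy bookkeeping that makes the Loday constructions and the derived smash product over $\call_X^{Hk}(Hk) \simeq Hk$ homotopy invariant --- which is exactly what the hypothesis of a cofibrant model of $Hk$ supplies --- together with the routine check that all the equivalences above can be taken to be equivalences of commutative $Hk$-algebra spectra.
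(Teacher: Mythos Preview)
Your proposal is correct and follows essentially the same route as the paper: apply the juggling formula (Theorem~\ref{thm:juggling}) to the sequence $S \to Hk \to Hk \to HA \to Hk$ and then collapse the two factors $\call_{\Sigma X}^{Hk}(Hk;Hk)$ and $\call_X^{Hk}(Hk)$ to $Hk$. Your extra remarks on cofibrancy and on the constant-simplicial-object argument for $\call_Y^{Hk}(Hk;Hk)\simeq Hk$ are fine elaborations of what the paper leaves implicit.
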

\begin{proof}
The juggling formula \ref{thm:juggling} for the sequence $Hk \ra Hk
\ra HA \ra Hk$ gives  
$$ \call_{\Sigma X}^{Hk}(HA; Hk) \simeq \call_{\Sigma X}^{Hk}(Hk; Hk) 
\wedge^L_{\call_X^{Hk}(Hk)} \call_X^{HA}(Hk)$$
but $\call_X^{Hk}(Hk) \simeq Hk \simeq \call_{\Sigma X}^{Hk}(Hk; Hk)$.
\end{proof}
\begin{rem}
Note that Proposition \ref{prop:stablesh} implies that $\call^{HA}_X(Hk)$
depends only on the homotopy type of $\Sigma X$, so $\call^{HA}_X(Hk)$
is a stable invariant of $X$. 
\end{rem} 
Consider the case where $A$ is flat over $k$ and $X = S^n$. Then
Equation \eqref{eq:xshukla} gives
\begin{equation} \label{eq:snshukla}
\call_{S^{n+1}}^{Hk}(HA; Hk) \simeq
\call^{HA}_{S^n}(Hk). 
\end{equation}
The term on the left hand side of \eqref{eq:snshukla} has as homotopy groups the
Hochschild homology of order $n+1$ of $A$ with coefficients in $k$. The right
hand side simplifies to $ \THH^{[n], A}(k)$ and this is Shukla homology of
order $n$ of $k$ over $A$. Therefore we obtain:
\begin{prop} \label{prop:hhsh}
Let $k$ be a commutative ring and let $A$ be an augmented commutative 
$k$-algebra which is flat as a $k$-module. Then for all $n \geq 0$
$$ \HH^{[n+1],k}_*(A; k) \cong \Sh^{[n], A}_*(k).$$
\end{prop}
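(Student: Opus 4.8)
The plan is to deduce Proposition~\ref{prop:hhsh} directly from the results already assembled in this section, so that the only work is bookkeeping with the identifications $\Shukla^{[n],k}(A;B) \simeq \widetilde{\HH}^{[n],k}(A;B) = \THH^{[n],k}(A;B)$ (Proposition~\ref{prop:same}) and the stable-invariance equivalence \eqref{eq:snshukla}. First I would invoke Proposition~\ref{prop:stablesh} with $X = S^n$, which is legitimate because $A$ is an augmented commutative $k$-algebra; this gives $\call_{S^{n+1}}^{Hk}(HA;Hk) \simeq \call_{S^n}^{HA}(Hk)$, i.e.\ \eqref{eq:snshukla}. The left-hand side is by definition $\THH^{[n+1],k}(A;k)$, and since $A$ is $k$-flat we may pass to homotopy groups using $\pi_*\THH^{[n],k}(A;M) \cong HH^{[n],k}_*(A;M)$ from \cite[Proposition 7.2]{blprz} (stated in Section~\ref{sec:loday}) to identify $\pi_*$ of the left-hand side with $\HH^{[n+1],k}_*(A;k)$.

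Next I would unwind the right-hand side. The term $\call_{S^n}^{HA}(Hk)$ is by the notational convention of Section~\ref{sec:loday} exactly $\THH^{[n],A}(k) = \L_{S^n}^{HA}(Hk;Hk)$, which is $\widetilde{\HH}^{[n],A}(k;k)$ in the notation of this section, and hence by Proposition~\ref{prop:same} it is equivalent to $\Shukla^{[n],A}(k;k)$, abbreviated $\Sh^{[n],A}(k)$. Taking $\pi_*$ of both sides of \eqref{eq:snshukla} then yields $\HH^{[n+1],k}_*(A;k) \cong \Sh^{[n],A}_*(k)$, which is the claim. I should double-check the edge cases $n=0$ and, if needed, $n=-1$: at $n=0$ the left side is $\HH^{[1],k}_*(A;k) = \THH^k_*(A;k)$ (using flatness) and the right side is $\Sh^{[0],A}_*(k) = \pi_*(Hk \wedge^L_{HA} Hk) \cong \Tor^A_*(k,k)$, and these agree by the standard identification of Hochschild homology with coefficients in the ground ring — consistent with the Greenlees-type statement discussed after Corollary~\ref{cor:augmented}.

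The one genuine subtlety — and the step I expect to be the main obstacle — is making sure the hypotheses line up so that Proposition~\ref{prop:stablesh} really applies and so that the flatness hypothesis is used in exactly the right place. Proposition~\ref{prop:stablesh} requires the sequence of cofibrations $Hk \ra Hk \ra HA \ra Hk$ with a cofibrant model of $Hk$; the augmentation $A \to k$ supplies the last map, but one must note that no flatness is needed there, whereas flatness of $A$ over $k$ is precisely what converts the abstract spectra $\THH^{[n+1],k}(A;k)$ and $\THH^{[n],A}(k) = \Sh^{[n],A}(k)$ into their algebraic homotopy-group descriptions $\HH^{[n+1],k}_*(A;k)$ and $\Sh^{[n],A}_*(k)$. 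So I would phrase the proof as: apply \eqref{eq:xshukla} with $X = S^n$ to get the equivalence of spectra, then separately observe that $k$-flatness of $A$ (together with \cite[Proposition 7.2]{blprz}) lets us read off the stated isomorphism of graded groups. This is essentially the three-line argument already sketched in the paragraph preceding the proposition; the writeup just needs to name Propositions~\ref{prop:stablesh} and~\ref{prop:same} and the flatness input explicitly.
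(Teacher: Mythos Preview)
Your proposal is correct and follows exactly the paper's own argument, which is the short paragraph immediately preceding the proposition: apply \eqref{eq:xshukla} (Proposition~\ref{prop:stablesh}) with $X=S^n$, identify the left-hand side with $\HH^{[n+1],k}_*(A;k)$ using $k$-flatness of $A$, and identify the right-hand side with $\Sh^{[n],A}_*(k)$ via Proposition~\ref{prop:same}. One tiny clean-up: flatness is only needed on the left-hand side, since $\Sh^{[n],A}_*(k)$ is \emph{defined} as $\pi_*\THH^{[n],A}(k)$ and requires no further hypothesis.
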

Note that for $n=0$ we obtain the classical formula \cite[X.2.1]{CE}
$$ \HH^{k}_*(A;k) \cong \Tor^A_*(k, k)$$
Combining Proposition \ref{prop:hhsh} with Corollary
\ref{cor:augmented} we get the following splitting result
for augmented commutative $k$-algebras. 
\begin{thm} \label{prop:genGreenlees}
Let $k$ be a commutative ring and let $A$ be an augmented commutative 
$k$-algebra which is flat as a $k$-module. Then for all $n \geq 0$ 
$$ \THH^{[n]}(A; k) \simeq \THH^{[n]}(k) \wedge_{Hk}  \THH^{[n],k}(A; k).$$
If $k$ is a field then we obtain the following isomorphism on the
level of homotopy groups 
$$ \THH_*^{[n]}(A; k) \cong \THH^{[n]}_*(k) \otimes_{k}  \HH_*^{[n],k}(A; k).$$
\end{thm}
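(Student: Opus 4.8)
The plan is to obtain this by feeding the augmented algebra $A$ into Corollary~\ref{cor:augmented} and then rewriting the resulting second smash factor by means of the stable-invariance computation of Proposition~\ref{prop:stablesh}; the homotopy-group statement then follows by a K\"unneth argument over the field $k$. Before starting, I would dispose of the case $n=0$ by hand: there $\THH^{[0]}(A;k) = Hk \wedge HA$, $\THH^{[0]}(k) = Hk \wedge Hk$ and $\THH^{[0],k}(A;k) \simeq HA$, so both sides of the claimed equivalence are $Hk \wedge HA$. Hence assume $n \geq 1$ and write $S^{n} = \Sigma S^{n-1}$.

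The first step is to apply Corollary~\ref{cor:augmented} with $X = S^{n-1}$ and with $A$ playing the role of the augmented commutative $k$-algebra. Being augmented over $k$ is exactly what makes the sequence $S \to Hk \to HA \to Hk$, to which Theorem~\ref{thm:juggling} is applied, admissible, and the corollary then yields an equivalence of commutative $Hk$-algebra spectra
$$ \THH^{[n]}(A;k) = \L_{S^{n}}(HA;Hk) \;\simeq\; \L_{S^{n}}(Hk) \wedge^L_{Hk} \L^{HA}_{S^{n-1}}(Hk) = \THH^{[n]}(k) \wedge^L_{Hk} \THH^{[n-1],A}(k). $$
The second step is to identify $\THH^{[n-1],A}(k) = \L^{HA}_{S^{n-1}}(Hk)$ with $\THH^{[n],k}(A;k)$. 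This is precisely equation~\eqref{eq:snshukla}, the case $X = S^{n-1}$ of Proposition~\ref{prop:stablesh}: applied to the sequence $Hk \to Hk \to HA \to Hk$, the juggling formula kills the two unwanted factors and leaves $\L^{Hk}_{S^{n}}(HA;Hk) \simeq \L^{HA}_{S^{n-1}}(Hk)$. Substituting this into the display above gives the spectrum-level statement $\THH^{[n]}(A;k) \simeq \THH^{[n]}(k) \wedge^L_{Hk} \THH^{[n],k}(A;k)$. I would remark that flatness of $A$ has not been used so far; it enters only to ensure that $\pi_*\THH^{[n],k}(A;k) \cong \HH^{[n],k}_*(A;k)$ and, via Proposition~\ref{prop:hhsh}, that this coincides with $\Sh^{[n-1],A}_*(k)$, so that the second smash factor has the advertised homotopy groups.

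For the homotopy-group formula I would now specialize to $k$ a field, so that $A$ is automatically $k$-flat, every $Hk$-module spectrum is free over $Hk$, and the K\"unneth spectral sequence for $\wedge^L_{Hk}$ collapses with no higher $\Tor$-terms — equivalently, one invokes the final displayed isomorphism of Corollary~\ref{cor:augmented}. This yields $\THH^{[n]}_*(A;k) \cong \THH^{[n]}_*(k) \otimes_k \pi_*\THH^{[n],k}(A;k) \cong \THH^{[n]}_*(k) \otimes_k \HH^{[n],k}_*(A;k)$, as claimed.

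The argument is essentially formal once the ingredients are assembled; the only genuinely delicate point is that the rewriting of the second factor as $\THH^{[n],k}(A;k)$ has to be carried out on the level of spectra, using Proposition~\ref{prop:stablesh}, rather than merely on homotopy groups using Proposition~\ref{prop:hhsh}. Beyond that one must simply be careful about the degree shift between $S^{n}$ and $S^{n-1}$, about the separate trivial case $n=0$, and about keeping track of the $Hk$-module structures along which the relative smash products are formed.
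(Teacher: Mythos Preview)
Your proof is correct and follows essentially the same route as the paper, which derives the theorem by combining Corollary~\ref{cor:augmented} (applied to $X=S^{n-1}$) with the spectrum-level identification $\L^{Hk}_{S^n}(HA;Hk)\simeq \L^{HA}_{S^{n-1}}(Hk)$ of equation~\eqref{eq:snshukla}/Proposition~\ref{prop:stablesh}. Your separate treatment of $n=0$ and your remark that flatness is only needed for the $\HH$-identification are accurate refinements of the paper's terse ``Combining Proposition~\ref{prop:hhsh} with Corollary~\ref{cor:augmented}''.
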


\section{A weak splitting for $\THH(R/(a_1,\ldots, a_r); R/\m)$}

Using a Tor-calculation by Tate from the 50's we obtain a splitting on
the level of homotopy groups of $\THH_*(R/(a_1,\ldots, a_r); R/\m)$ in
good cases. This yields an easy way of calculating $\THH_*(\Z/p^m;
\Z/p)$ for $m \geq 2$. Compare
\cite{pirashvili,brun,angeltveit} for other approaches. 

\begin{thm} \label{thm:Tate}
Let $R$ be a regular local ring with maximal ideal $\m$ and let $(a_1, \ldots, 
a_r)$ be a regular sequence in $R$ with $a_i \in \m^2$ for $1 \leq i
\leq r$. Then 
$$ \THH_*(R/(a_1,\ldots, a_r); R/\m) \cong \THH_*(R; R/\m) \otimes_{R/\m}
\Gamma_{R/\m}(S_1, \ldots, S_r)$$ 
with $|S_i| =2$. 
\end{thm}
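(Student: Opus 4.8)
The plan is to reduce this to a $\THH$-computation over $R$ via the standard change-of-rings and flatness arguments, and then to feed in Tate's resolution. First I would use the equivalence $\THH(R/(a_1,\ldots,a_r)) \simeq R/(a_1,\ldots,a_r) \wedge^L_R \THH(R)$ (valid because $\THH(-)$ is a functor of commutative $S$-algebras and $R \to R/(a_1,\ldots,a_r)$ is built from the Koszul complex), so that
\[
  \THH(R/(a_1,\ldots,a_r); R/\m) \simeq HR/\m \wedge^L_{HR/(a_1,\ldots,a_r)} \bigl(R/(a_1,\ldots,a_r)\wedge^L_{R} \THH(R)\bigr) \simeq HR/\m \wedge^L_{R} \THH(R).
\]
Since $R$ is regular local, $\THH_*(R; R/\m)$ is known (it is $R/\m$ tensored with an exterior algebra on the cotangent space, by the Hochschild--Kostant--Rosenberg–type computation, and in any case its homotopy is flat — in fact free — over $R/\m$), so the relevant spectral sequence computing $\pi_*(HR/\m \wedge^L_R \THH(R))$ has
\[
  E^2_{s,t} = \Tor^R_{s}(R/\m, \THH_t(R)) \Longrightarrow \THH_{s+t}(R/(a_1,\ldots,a_r); R/\m).
\]

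The key input is then Tate's theorem: for $R$ regular local and $(a_1,\ldots,a_r)$ a regular sequence contained in $\m^2$, $\Tor^R_*(R/\m, R/\m)$ — equivalently the homology of the minimal free resolution of $R/\m$ over $R/(a_1,\ldots,a_r)$, or rather the relevant derived functor — acquires divided-power polynomial generators $S_1,\ldots,S_r$ in homological degree $2$ precisely because the $a_i$ lie in $\m^2$ (so they contribute nothing in degree $1$, and their Koszul cycles must be killed by degree-$2$ divided-power variables). Concretely, one writes $R/(a_1,\ldots,a_r) = R \otimes_{R[y_1,\ldots,y_r]} R$ with $y_i \mapsto a_i$ and $y_i \mapsto 0$, and uses that over $R[y_1,\ldots,y_r]$ the element $R$ has a Koszul resolution, giving $\Tor^{R}_*(R/\m,R/\m) \cong \THH_*(R;R/\m) \otimes_{R/\m} \Gamma_{R/\m}(S_1,\ldots,S_r)$. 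I would cite \cite{Tate} for exactly this divided-power structure; the condition $a_i \in \m^2$ is what guarantees the $S_i$ appear in degree $2$ rather than degree $1$ and that no further algebra generators are forced.

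Next I would argue that the spectral sequence collapses at $E^2$ with no extension problems. Collapse: the $\Tor$-algebra $\Tor^R_*(R/\m,R/\m)$ — and hence, by flatness of $\THH_*(R;R/\m)$ over $R/\m$, the whole $E^2$-page — is concentrated so that the generators $S_i$ sit in a single column (filtration degree governed by the $y$-resolution), and the differentials $d^r$ for $r \geq 2$ must vanish on the algebra generators for degree/filtration reasons, hence vanish everywhere by multiplicativity. No extensions: again because $\THH_*(R;R/\m)\otimes_{R/\m}\Gamma_{R/\m}(S_1,\ldots,S_r)$ is free over $R/\m$ and the divided-power generators already have the correct multiplicative relations on the nose (there is no room for a nontrivial extension since everything is an $R/\m$-module and the filtration pieces are free), one gets an isomorphism of algebras. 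The main obstacle is making the identification of Tate's algebraic $\Tor$-computation with the correct part of the $E^2$-page precise and checking that the divided-power generators $S_i$ land in filtration/homological degree exactly $2$ with the claimed multiplicative structure — i.e. importing Tate's result cleanly into the spectral sequence bookkeeping. Once that is pinned down, the collapse and extension arguments are the now-familiar ``all relevant differentials vanish, all relevant squares/$p$-th powers already vanish'' argument used repeatedly above.
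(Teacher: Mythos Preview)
Your first step is incorrect, and the error propagates through the rest of the argument. The equivalence
\[
\THH(R/(a_1,\ldots,a_r)) \simeq R/(a_1,\ldots,a_r) \wedge^L_R \THH(R)
\]
is \emph{not} valid. Being ``built from the Koszul complex'' does not give base change for $\THH$; such a formula would require $R \to R/I$ to be (formally) $\THH$-\'etale, which a quotient by a regular sequence certainly is not. If you follow your chain of equivalences you end up asserting $\THH(R/I; R/\m) \simeq HR/\m \wedge^L_R \THH(R) = \THH(R; R/\m)$, which directly contradicts the statement you are trying to prove (the $\Gamma_{R/\m}(S_1,\ldots,S_r)$ factor would vanish). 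Correspondingly, the spectral sequence you write down, with $E^2_{s,t} = \Tor^R_s(R/\m, \THH_t(R))$, converges to $\THH_*(R;R/\m)$, not to $\THH_*(R/I;R/\m)$. You also misstate Tate: his theorem computes $\Tor^{R/I}_*(R/\m,R/\m)$ as $\Lambda_{R/\m}(T_1,\ldots,T_d)\otimes_{R/\m}\Gamma_{R/\m}(S_1,\ldots,S_r)$; by contrast $\Tor^{R}_*(R/\m,R/\m)$ is simply the exterior algebra $\Lambda_{R/\m}(T_1,\ldots,T_d)$ because $R$ is regular --- no divided-power generators appear there.

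The paper's argument is different in exactly the way that fixes this. It applies the juggling formula (Theorem~\ref{thm:juggling}) for $X=S^0$ to the sequence $S \to HR \to HR/I \to HR/\m$, obtaining
\[
\THH(R/I;R/\m) \simeq \THH(R;R/\m) \wedge^L_{HR/\m \wedge^L_{HR} HR/\m} \bigl(HR/\m \wedge^L_{HR/I} HR/\m\bigr),
\]
i.e.\ a smash over $\Tor^R_*(R/\m,R/\m)$, not over $R$. Tate's theorem now enters precisely: it identifies $\Tor^{R/I}_*(R/\m,R/\m)$ as a free module over $\Tor^R_*(R/\m,R/\m) \cong \Lambda_{R/\m}(T_1,\ldots,T_d)$ on the divided-power part $\Gamma_{R/\m}(S_1,\ldots,S_r)$, with the canonical module structure. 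Hence the K\"unneth spectral sequence has $E^2$ concentrated in the zeroth column, equal to $\THH_*(R;R/\m)\otimes_{\Lambda_{R/\m}(T_i)}\Tor^{R/I}_*(R/\m,R/\m) \cong \THH_*(R;R/\m)\otimes_{R/\m}\Gamma_{R/\m}(S_1,\ldots,S_r)$, and collapses without extensions. Your intuition about what makes the spectral sequence collapse (freeness, degree/column considerations) is fine; it is the initial decomposition that needs to be replaced.
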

\begin{proof}
  Let $I = (a_1,\ldots,a_r)$.  Applying the juggling formula
  \ref{thm:juggling} to $X = S^0$ and to
  the sequence $S \ra HR \ra HR/I \ra HR/\m$ gives
  $$ \THH(R/I; R/\m) \simeq \THH(R; R/\m) 
  \underset{HR/\m \wedge^L_{HR} HR/\m}{\wedge^L}
 ( HR/\m \underset{{HR/I}}{\wedge^L} HR/\m).$$
  In \cite{Tate} Tate determines the algebra structure on the homotopy
  groups of the last term, 
  $$ \Tor_*^{R/I}(R/\m, R/\m) \cong 
  \Lambda_{R/\m}(T_1, \ldots, T_d) \otimes_{R/\m} 
  \Gamma_{R/\m}(S_1, \ldots, S_r).$$
  Here, $d$ is the dimension of $\m/\m^2$ as an $R/\m$-vector space. We 
  can choose a regular system of generators $(t_1, \ldots, t_d)$ for $\m$ 
  such that the module structure of $\Tor_*^{R/I}(R/\m, R/\m)$ 
  over $\Tor_*^R(R/\m, R/\m) \cong \Lambda_{R/\m}(T_1, \ldots, T_d)$ is the 
  canonical one (see \cite[p.~22]{Tate}). Hence the K\"unneth spectral sequence 
  for $\THH(R/I; R/\m)$ has an $E^2$-term isomorphic to 
  $$ \THH_*(R; R/\m) \otimes_{R/\m} \Gamma_{R/\m}(S_1, \ldots, S_r)$$
  which is concentrated in the zeroth column that consists of 
  $$\THH_*(R; R/\m) \otimes _{ \Tor_*^{R}(R/\m, R/\m)}
  \Tor_*^{R/I}(R/\m, R/\m).$$ Therefore, there are no  
  non-trivial differentials and extensions in this spectral sequence. 
\end{proof}

We call the splitting of Theorem \ref{thm:Tate} a \emph{weak
  splitting} because it is only a splitting on the level of homotopy
groups. In Section~\ref{sec:splitting} we develop a stronger spectrum-level
splitting of a similar form.

We apply the above result in the special case where $R$ is a principal 
ideal domain. 
Let $p \neq 0$ be an element of $R$, such that $(p)$ is a maximal ideal in 
$R$ and let $n$ be bigger or equal to $2$. Then  we are in the situation of 
Theorem \ref{thm:Tate} because $R_{(p)}/(p^n) \cong R/(p^n)$ so we can drop 
the assumption that $R$ is local. 
The above result immediately gives an explicit formula for 
$\THH(R/(p^n); R/(p))$. 
\begin{cor} \label{cor:thhpid}
For all $n > 1$: 
$$ \THH_*(R/(p^n); R/(p)) \cong \THH_*(R; R/(p)) \otimes_{R/(p)} 
\Gamma_{R/(p)}(S_1).$$
\end{cor}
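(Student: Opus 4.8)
The plan is to derive Corollary~\ref{cor:thhpid} as an essentially immediate specialization of Theorem~\ref{thm:Tate}. First I would reduce to the local case: since localization is flat, $R_{(p)}/(p^n) \cong R/(p^n)$ and the $\THH$ in question only depends on this ring together with its residue field $R/(p) \cong R_{(p)}/(p)$, so we may replace $R$ by the regular local ring $R_{(p)}$ (which has dimension one, as $p$ is a nonzero element generating a maximal ideal in the domain $R$). Then $\m = (p)$, the element $a_1 = p^n$ lies in $\m^2$ because $n \geq 2$, and $(p^n)$ is a regular sequence of length $r = 1$ since $p$ (hence $p^n$) is a nonzerodivisor. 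Thus the hypotheses of Theorem~\ref{thm:Tate} are met with $r = 1$.

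Next I would simply invoke Theorem~\ref{thm:Tate} in this situation. It gives
$$ \THH_*(R/(p^n); R/(p)) \cong \THH_*(R; R/(p)) \otimes_{R/(p)} \Gamma_{R/(p)}(S_1), \qquad |S_1| = 2,$$
which is exactly the asserted formula (after translating notation back, noting that $\THH_*(R_{(p)}; R/(p)) \cong \THH_*(R; R/(p))$ by flatness of localization, or simply by observing that one may take $R$ local from the outset as the statement preceding the corollary already does). There is essentially nothing further to prove; the only things worth a sentence are the verification that $\dim \m/\m^2 = 1$ in the localized ring (so that the exterior-algebra factor $\Lambda_{R/(p)}(T_1,\dots,T_d)$ contributes only the $T_1$ already absorbed into $\THH_*(R;R/(p))$ via $\Tor_*^R(R/(p),R/(p))$), and the remark that localization does not affect the $\THH$ groups with $R/(p)$-coefficients.

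I do not expect any real obstacle here: the corollary is a direct instance of the theorem with $r=1$, and the only mild subtlety — passing from an arbitrary principal ideal domain to its localization at $(p)$ — is already flagged in the paragraph preceding the corollary and is handled by the standard flatness argument. If one wanted to be maximally self-contained one could instead re-run the proof of Theorem~\ref{thm:Tate} directly in the PID setting, but that would be pointless duplication; the clean approach is just to cite the theorem.
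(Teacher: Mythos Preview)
Your proposal is correct and matches the paper's approach exactly: the paper also notes that $R_{(p)}/(p^n)\cong R/(p^n)$ so one may work over the regular local ring $R_{(p)}$ with $\m=(p)$ and $a_1=p^n\in\m^2$, and then simply reads off the $r=1$ case of Theorem~\ref{thm:Tate}. The paper does not even record a separate proof beyond the sentence preceding the corollary.
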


\begin{rem}
One may try to use the same method for
$\THH^{[n]}$.  The juggling formula from Theorem \ref{thm:juggling}
for $\xymatrix@1@C=15pt{S \ar[r]^{=} & S \ar[r] & H\Z \ar[r] & H\Z/p^m \ar[r]
  & H\Z/p}$ gives us 
\[\THH^{[n]}(\Z/p^m; \Z/p) \simeq \THH^{[n]}(\Z;\Z/p)
  \underset{\Sh^{[n-1],\Z}(\Z/p)}{\wedge^L} \Sh^{[n-1],\Z/p^m}(\Z/p).\]
Thus we must understand the structure of $\Sh^{[n-1],\Z/p^m}(\Z/p)$ as a
$\Sh^{[n-1],\Z}(\Z/p)$-algebra.  It is not possible to do this through direct
$\Tor$ computations for all $n$, as the computations rapidly become
intractable; even 
$\Sh^{[1],\Z/p^2}(\Z/p)$ is rather involved \cite[(5.2)]{BP}, but see
Proposition \ref{prop:shuklaRaRp} for a general formula.  

In order to obtain calculations in this example and in related cases, we need to
develop the more delicate splitting of 
Section~\ref{sec:splitting}.  
\end{rem}

\section{A splitting for $\THH^{[n]}(R/a;R/p)$}
\label{sec:splitting}

Throughout this section, we assume that $R$ is a commutative
  ring and $a,p\in R$ are elements which are not zero divisors for which
   $(p)$ is a maximal ideal and $a \in
  (p)^2$. 

\begin{lem} \label{lem:sh:ap->pp}
  Let $R$, $p$, and $a$ be as above, and let $\pi:R/a \rto R/p$ be the
  obvious reduction. 
   Then the map induced by  $\pi$, 
  \[\pi_*:\Sh_*^{[0],R}(R/a;R/p) \longrightarrow \Sh_*^{[0],R}(R/p),\]
  factors as 
$$ \xymatrix@1{\Sh_*^{[0],R}(R/a;R/p) \ar[r]^(0.7)\epsilon & R/p
  \ar[r]^(0.3)\eta &  \Sh_*^{[0],R}(R/p).} $$
\end{lem}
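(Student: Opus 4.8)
The plan is to compute both sides explicitly using the short free resolution of $R/a$ and $R/p$ over $R$, and to track what the reduction map $\pi$ does on generators. First I would recall from the proof of Proposition~\ref{prop:shuklazzp} that $\Sh^{[0],R}_*(R/p) = \pi_*(R/p \wedge^L_R R/p) \cong \Lambda_{R/p}(\tau_1)$ with $|\tau_1| = 1$, where $\tau_1$ comes from the class in homological degree one in the K\"unneth (or $\Tor$) spectral sequence for the resolution $R \xra{\cdot p} R \to R/p$; and similarly $\Sh^{[0],R}_*(R/a;R/p) \cong \Lambda_{R/p}(\tau_1')$ with $\tau_1'$ of degree one coming from the resolution $R \xra{\cdot a} R \to R/a$ tensored with $R/p$. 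So both sides are free $R/p$-modules of rank one in degrees $0$ and $1$, and the only question is the behaviour of $\pi_*$ in degree one.

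Next I would use the fact that the degree-zero part is an isomorphism (both are canonically $R/p$, and $\pi$ is unital), which gives the factorization of the bottom filtration through $\eta$. The content is then that $\pi_*$ kills $\tau_1'$. To see this I would compare the two free resolutions: the map $R/a \to R/p$ lifts to a map of complexes
\[
\xymatrix{
R \ar[r]^{\cdot a} \ar[d]^{=} & R \ar[r] \ar[d]^{\cdot c} & R/a \ar[d] \\
R \ar[r]^{\cdot p} & R \ar[r] & R/p
}
\]
where $c \in R$ is any element with $a = cp$; such a $c$ exists precisely because $a \in (p)^2 \subseteq (p)$, and in fact $c \in (p)$. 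Applying $- \otimes_R R/p$ and taking homology, the induced map on $H_1 = \Tor_1^R(R/a,R/p) \to \Tor_1^R(R/p,R/p)$ is multiplication by the image of $c$ in $R/p$, which is $0$ since $c \in (p)$. Passing through the K\"unneth spectral sequences (which collapse for degree reasons, as in the proof of Proposition~\ref{prop:shuklazzp}) this shows $\pi_*(\tau_1') = 0$, hence $\pi_*$ factors through the augmentation $\epsilon : \Sh^{[0],R}_*(R/a;R/p) \to R/p$ followed by the unit $\eta$.

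The main obstacle is purely bookkeeping: making sure that the class $\tau_1'$ detected in the $E^\infty = E^2$ page of the K\"unneth spectral sequence for $R/a \wedge^L_R R/p$ really is carried by the lift $\cdot c$ of the comparison map, and that no extension problem or hidden multiple of $\tau_1$ survives on the target side. Since both spectral sequences are concentrated in filtrations $0$ and $1$ with no room for differentials or extensions, the associated-graded computation is the actual computation, so this is routine; I would simply note that $a = cp$ with $c \in (p)$ forces the degree-one component of $\pi_*$ to vanish. This is exactly the assertion that $\pi_*$ factors as $\eta \circ \epsilon$.
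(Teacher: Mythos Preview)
Your approach is essentially identical to the paper's: compute both $\Tor$ groups via the two-term free resolutions, lift $\pi$ to a chain map, and observe that the degree-one component is multiplication by an element of $(p)$, hence zero in $R/p$. The only slip is that in your diagram the two vertical arrows are swapped---for the squares to commute with $a=cp$ you need $\cdot c$ on the \emph{left} column (homological degree one) and the identity in the middle (degree zero), exactly as the paper does (writing $a=bp^{2}$ and using $\cdot bp$ there); with that correction the two proofs coincide.
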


\begin{proof}
  The assumptions on $a$ and $p$ ensure that there exists a $b\in R$
  such that $a = bp^2$. 
  We have the following diagram:
  \[\xymatrix{
      R \ar[r]^{\cdot a}  \ar[d]_{\cdot bp} & R \ar[r]^-\epsilon \ar[d]^= & R/a \\
      R \ar[r]^{\cdot p} & R \ar[r]^-\epsilon & R/p
    }\]
  Thus we have a map of resolutions.  When we tensor up with $R/p$ we get the
  following diagram:
  \[\xymatrix{
      R \otimes_R R/p \ar[r]^{a\otimes 1} \ar[d]_{bp\otimes 1 = 0} &
      R\otimes_R R/p  \ar[d]^=\\
      R \otimes_R R/p \ar[r]^{p\otimes 1} & R\otimes_R R/p 
      }
    \]
    We take the homology of the top and bottom row.  Note that since $a,p \in
    (p)$, the horizontal maps are $0$; thus the top and bottom row produce
    $\Tor$'s which are of the form $\Lambda_{R/p}(\tau_1)$.  However, when we
    look at where $\tau_1$ goes from the top to the bottom, it maps by
    multiplication by $bp$---which is $0$ in $R/p$.  Thus this map is $0$.
\end{proof}

Surprisingly enough, this special case allows us to prove a spectrum-level
splitting for all $n \geq 0$.

\begin{defn}
  Let $\A_{HR/p}$ be the category of augmented commutative
  $HR/p$-algebras and $h\A_{HR/p}$ its homotopy category.  Let
  $\Mod_{HR/p}$ be the category of $HR/p$-modules.
\end{defn}

\begin{lem} \label{lem:ap->pp}
For $R$, $p$, and $a$  as above, the map
  \[\varphi_n\colon\THH^{[n], R}(R/a; R/p) \rto \THH^{[n], R}(R/p)\]
  induced by $R/a \rto R/p$ factors through $HR/p$ in $h\A_{HR/p}$.
\end{lem}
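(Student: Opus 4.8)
The plan is to bootstrap from the $n=0$ case, which is already handled by Lemma~\ref{lem:sh:ap->pp}, using the iterated bar construction description of higher $\THH$ together with the identification of $\THH^{[n],R}$ with higher Shukla homology from Proposition~\ref{prop:same}. Recall that $\THH^{[n+1],R}(R/a;R/p) \simeq B^S(HR/p, \THH^{[n],R}(R/a;R/p), HR/p)$ and similarly $\THH^{[n+1],R}(R/p) \simeq B^S(HR/p, \THH^{[n],R}(R/p), HR/p)$, as augmented commutative $HR/p$-algebras. The map $\varphi_{n+1}$ is obtained by applying $B^S(HR/p, -, HR/p)$ to $\varphi_n$. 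So the strategy is an induction: assuming $\varphi_n$ factors as $\THH^{[n],R}(R/a;R/p) \xra{\varepsilon} HR/p \xra{\eta} \THH^{[n],R}(R/p)$ in $h\A_{HR/p}$, I want to conclude the same for $\varphi_{n+1}$.

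The key point is that the bar construction $B^S(HR/p, -, HR/p)$ is functorial in its middle argument and sends a composite to a composite; moreover it is homotopy-invariant, so it descends to a functor on $h\A_{HR/p}$. Applying it to the factorization of $\varphi_n$ gives a factorization of $\varphi_{n+1}$ through $B^S(HR/p, HR/p, HR/p)$. But $B^S(HR/p, HR/p, HR/p) \simeq HR/p$ (the bar construction on the trivial algebra), so this is exactly the claimed factorization through $HR/p$ in $h\A_{HR/p}$, with the augmentation being $B^S(HR/p,\varepsilon,HR/p)$ composed with the equivalence to $HR/p$, and the unit being its analogue on the target side. One should be a little careful that the identifications are taking place in the homotopy category of augmented commutative $HR/p$-algebras rather than merely of $HR/p$-modules, but since $\THH^{[n],R}$ of these quotients lands naturally in $\A_{HR/p}$ (the basepoint copy of $R/p$ supplies both the algebra structure over $HR/p$ and the augmentation), and $B^S(HR/p,-,HR/p)$ preserves this structure, this is fine; the passage between model-categorical descriptions is exactly as in the proof of Proposition~\ref{prop:same}.

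The base case $n=0$ is Lemma~\ref{lem:sh:ap->pp} restated via $\THH^{[0],R}(R/a;R/p) = \Sh^{[0],R}(R/a;R/p) = HR/a \wedge^L_{HR} HR/p$, noting that the resolution computation there exhibits the factoring map $\varepsilon$ concretely as collapsing $\tau_1$ to $0$. The main obstacle, such as it is, is bookkeeping: making sure the factorization produced by $B^S(HR/p,-,HR/p)$ is genuinely a factorization in $h\A_{HR/p}$ and not just on underlying $HR/p$-modules, and that the equivalence $B^S(HR/p,HR/p,HR/p)\simeq HR/p$ is one of augmented commutative $HR/p$-algebras. Both are routine given the model-category setup of \cite{ekmm} already invoked in Proposition~\ref{prop:same}, so the proof is genuinely just the inductive step described above.
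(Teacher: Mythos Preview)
Your induction step is correct and is exactly what the paper does: apply the two-sided bar construction $B^S(HR/p,-,HR/p)$ to a factorization of $\varphi_{n-1}$ through $HR/p$ and note $B^S(HR/p,HR/p,HR/p)\simeq HR/p$.

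The gap is in the base case. Lemma~\ref{lem:sh:ap->pp} is a statement about the induced map on \emph{homotopy groups}: it says $\pi_*\varphi_0$ factors through $R/p$. What Lemma~\ref{lem:ap->pp} asserts is that $\varphi_0$ itself factors through $HR/p$ in the homotopy category $h\A_{HR/p}$ of augmented commutative $HR/p$-algebras. These are not a priori the same: a map of augmented commutative $HR/p$-algebras between two copies of the square-zero extension $HR/p\vee\Sigma HR/p$ could in principle be nontrivial in $h\A_{HR/p}$ while inducing the zero map on $\pi_1$. You have not ruled this out; saying ``the resolution computation exhibits the factoring map $\varepsilon$ concretely as collapsing $\tau_1$ to $0$'' only produces a factorization on $\pi_*$.

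The paper closes this gap with a genuine computation: using Basterra's identification
\[
h\A_{HR/p}(HR/p\vee\Sigma HR/p,\,HR/p\vee\Sigma HR/p)\;\cong\;h\Mod_{HR/p}\bigl(\LL Q\,\RR I(HR/p\vee\Sigma HR/p),\,\Sigma HR/p\bigr),
\]
together with the Basterra--McCarthy connectivity estimate to show $\pi_1\LL Q\,\RR I(HR/p\vee\Sigma HR/p)\cong R/p$, and the universal coefficient spectral sequence (over the field $R/p$) to conclude that this hom-set is exactly $\Hom_{R/p}(R/p,R/p)$. In other words, maps in $h\A_{HR/p}$ between these square-zero extensions \emph{are} detected on $\pi_1$, and only then does Lemma~\ref{lem:sh:ap->pp} finish the job. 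This TAQ calculation is the ``key step'' you are missing.
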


\begin{proof} 
  The key step is the $n = 0$ case.

  From \cite[Proposition 2.1]{dlr} we know that
  $\THH^{[0],R}(R/a; R/p) \simeq HR/p \vee \Sigma HR/p$ and also
  $\THH^{[0],R}(R/p) \simeq HR/p \vee \Sigma HR/p$.  So we need to
  understand 
  \[h\A_{HR/p}(H R/p \vee \Sigma HR/p, HR/p \vee \Sigma
    HR/p).\]
  By \cite[Proposition 3.2]{basterra}, we can identify this as
  \[h\Mod_{HR/p}(\LL Q\RR I(HR/p \vee \Sigma HR/p), \Sigma HR/p).\]
  Given an $A\in \A_{HR/p}$, we have a pullback
  \[\xymatrix{
      IA\ar[d] \ar[r] & A \ar[d]^\epsilon \\
      {*} \ar[r] & HR/p
    }\]
  Let $B$ be a nonunital $HR/p$-algebra. Basterra defines $Q(B)$ to be the
  pushout
  \[\xymatrix{
      B \wedge_{HR/p} B \ar[r] \ar[d] & B \ar[d] \\
      {*} \ar[r] & Q(B)
    }\]
  We want to take the left- and right-derived versions of these functors for
  Basterra's result.  

  Let $X$ be a fibrant replacement for $HR/p \vee \Sigma HR/p$ in $\A_{HR/p}$,
  so that $X \xra\epsilon HR/p$ is a fibration.  Thus the square
  \[\xymatrix{\RR I(HR/p\vee \Sigma HR/p) \ar[r] \ar[d] & X
    \ar@{->>}[d]^\epsilon \\ 
      {*} \ar[r] & HR/p}\]
  is a homotopy pullback square (since every spectrum is fibrant
  \cite[Proposition 13.1.2]{hirschhorn}).  For conciseness we write
  $Y = \RR I(HR/p\vee \Sigma HR/p)$.  We have a long exact sequence of homotopy
  groups
  \[0 \rto \pi_1 Y \rto \pi_1 X \rto \pi_1 HR/p \rto \pi_0 Y \rto \pi_0 X \rto
    \pi_0 HR/p \rto \pi_{-1} Y \rto 0,\]
  where we have used that $X \simeq HR/p \vee \Sigma HR/p$ so that its homotopy
  groups are concentrated in degrees $0$ and $1$.  Note that the map $\pi_0X
  \rto \pi_0 HR/p$ is the identity.  We thus see that $\pi_iY \cong 0$ for $i
  \neq 1$ and $\pi_1Y \cong R/p$. % , so that  $\RR I( HR/p \vee \Sigma 
  % HR/p) \simeq \Sigma HR/p$.

  We need to identify
  \[h\Mod_{ HR/p}(\LL Q(\Sigma  HR/p), \Sigma HR/p) \cong \pi_0 F_{
      HR/p}(\LL Q(\Sigma HR/p), \Sigma  HR/p),\]
  where $F_{HR/p}(\cdot,\cdot)$ is the function spectrum.
 We use the universal coefficient spectral sequence
  \[E^{s,t}_2 &= \Ext_{R/p}^{s,t}(\pi_*\LL Q(\Sigma HR/p), \pi_*\Sigma HR/p) \\
    &\Longrightarrow \pi_{t-s} F_{ HR/p}(\LL Q(\Sigma HR/p), \Sigma HR/p).\]
  Note that we're working over a field, so $E_2^{s,t} = 0$ for $s \neq 0$, and
  $\pi_*(\Sigma HR/p)$ is zero everywhere except at $\pi_1$, so in fact the
  spectral sequence collapses at $E_2$.  Since we are only interested in
  $\pi_0$, the only term relevant to us is
  \[E_\infty^{0,0} \cong E_2^{0,0}
  \cong \Hom_{R/p}(\pi_1\LL Q(\Sigma HR/p), R/p).\]  Note that this group
  cannot be $0$, since our hom-set contains at least two elements: the identity
  map and the $0$ map. Thus it remains to compute $\pi_1 \LL Q(\Sigma HR/p)$.
  
  Consider the diagram
  \[\xymatrix{
      (\Sigma HR/p)^{cof} \ar[r]^f \ar@{->>}[d]_\sim & \LL Q(\Sigma HR/p) \\
      \Sigma HR/p 
    }\]
  By \cite[Proposition 2.1]{BastMcC}, since $(\Sigma HR/p)^{cof}$ is
  $0$-connected, $f$ is $1$-connected; thus $\pi_1f$ is surjective.  Since $R/p$
  is a field, we must have
  \[\pi_1\LL Q(\Sigma HR/p) \cong 0 \hbox{ or } R/p.\]
  Since it can't be $0$, it must be $R/p$, with the induced map being the
  identity.  

  Let $\tau_1^{(a)}$ be the generator of the $\Lambda(\tau_1)$ obtained as
  $\Sh^{[0],R}(R/a;R/p)$ and let $\tau_1^{(p)}$ be the generator of the
  $\Lambda(\tau_1)$ obtained as $\Sh^{[0],R}(R/p)$ in the calculation of
  Lemma~\ref{lem:sh:ap->pp}.   The above calculation shows
  that
  \[h\Mod_{H R/p}(\LL Q \RR I(HR/p \vee \Sigma HR/p),\Sigma HR/p) \\ \qquad\ 
    \qquad\ \qquad \cong
    \Hom_{R/p}(R/p, R/p)\]
where the first copy of $R/p$ is generated by $\tau_1^{(a)}$ and the
second copy is generated by $\tau_1^{(p)}$. But the induced map on
  $\Sh^{[0],R}$ takes $\tau_1^{(a)}$ to $0$.  Thus the corresponding map in
  $h\A_{R/p}$ is also $0$.  This proves the $n=0$ case. 
  
  We now turn to the induction step.  We have the composition
  \[\xymatrix@C=-1.5em{
      B(HR/p, \THH^{[n-1],R}(R/a;R/p), HR/p) \ar[rd]^{\mathrm{hyp.}}
      \ar[dd]_{B(1,\varphi_{n-1},1)}
      \\
      &  B(HR/p, HR/p, HR/p)   \ar[ld]  \\
       B(HR/p, \THH^{[n-1],R}(R/p; R/p), HR/p)
    }\]
  of maps of simplicial spectra.  Taking the realization gives us the
  composition 
  \[\varphi_n: \THH^{[n]}(R/a;R/p) \longrightarrow HR/p \longrightarrow
    \THH^{[n]}(R/p),\]
  as desired.
\end{proof}

By applying $\pi_*$ to the result of Lemma~\ref{lem:ap->pp} we get the following
generalization of Lemma~\ref{lem:sh:ap->pp}.

\begin{cor} \label{cor:sh:ap->pp}
  For $R$, $p$, and $a$ as above, for all $n \geq 0$ the map
  \[\Sh^{[n], R}_*(R/a; R/p) \longrightarrow \Sh_*^{[n],R}(R/p)\]
  induced by $R/a \rto R/p$ factors as
  \[\xymatrix@1{\Sh^{[n],R}_*(R/a; R/p) \ar[r]^(0.7)\epsilon & R/p
    \ar[r]^(0.3)\eta & \Sh_*^{[n],R}(R/p).}\] 
  Here, $R/p$ is considered as a graded ring concentrated in degree $0$; the
  first map in the factorization is the augmentation and the second is the unit
  map induced by the inclusion of the basepoint.
\end{cor}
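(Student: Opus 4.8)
The plan is to deduce the corollary from Lemma~\ref{lem:ap->pp} simply by applying $\pi_*$, once the map on higher Shukla homology has been identified with the map $\varphi_n$ of that lemma. First I would invoke Proposition~\ref{prop:same}, which provides an equivalence $\Sh^{[n],R}(A;B) \simeq \widetilde{\HH}^{[n],R}(A;B) = \THH^{[n],R}(A;B)$; the equivalence is built inductively from two-sided bar constructions and is natural in the algebra $A$, so the reduction $R/a \to R/p$ over $R$ induces, compatibly, the map of augmented commutative $HR/p$-algebra spectra $\varphi_n\colon \THH^{[n],R}(R/a;R/p) \to \THH^{[n],R}(R/p)$ from Lemma~\ref{lem:ap->pp}, whose effect on homotopy groups is exactly the map $\Sh^{[n],R}_*(R/a;R/p) \to \Sh_*^{[n],R}(R/p)$ we need to factor. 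Thus it suffices to compute $\pi_*\varphi_n$.

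Next I would feed in Lemma~\ref{lem:ap->pp}: it tells us that $\varphi_n$ factors through $HR/p$ in $h\A_{HR/p}$. The key point is that $HR/p$ is a zero object of $\A_{HR/p}$, hence of its homotopy category $h\A_{HR/p}$. Indeed, a morphism of augmented commutative $HR/p$-algebras into $HR/p$ is compatible with the augmentations and must therefore coincide with the augmentation $\epsilon$, and dually a morphism out of $HR/p$ must coincide with the unit $\eta$. Consequently the factorization of $\varphi_n$ through $HR/p$ is not merely a bare factorization but is forced to be the composite $\THH^{[n],R}(R/a;R/p) \xra{\epsilon} HR/p \xra{\eta} \THH^{[n],R}(R/p)$ in $h\A_{HR/p}$.

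Finally I would apply $\pi_*$. Since $\pi_* HR/p \cong R/p$ is concentrated in degree $0$, and $\pi_*$ takes $\epsilon$ to the augmentation of $\Sh^{[n],R}_*(R/a;R/p)$ and $\eta$ to the unit map $R/p \to \Sh_*^{[n],R}(R/p)$ induced by the inclusion of the basepoint, the factorization $\varphi_n \simeq \eta\circ\epsilon$ yields precisely the factorization asserted in the corollary. I do not anticipate a genuine obstacle here---the real work is Lemma~\ref{lem:ap->pp}---but I would be careful about two bookkeeping points: the naturality in $A$ of the equivalence in Proposition~\ref{prop:same}, which is what legitimizes replacing the Shukla map by $\pi_*\varphi_n$; and the observation that a factorization through $HR/p$ in $h\A_{HR/p}$ is unique, so that the two factors really are $\epsilon$ and $\eta$, because $HR/p$ is a zero object.
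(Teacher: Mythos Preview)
Your proposal is correct and takes essentially the same approach as the paper, which simply says the corollary follows ``by applying $\pi_*$ to the result of Lemma~\ref{lem:ap->pp}.'' Your added observation that $HR/p$ is a zero object in $\A_{HR/p}$, forcing the factorization through $HR/p$ to be $\eta\circ\epsilon$, is a helpful clarification that the paper leaves implicit.
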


By Lemma \ref{lem:coequ}, 
\[ \THH^{[n],R/a}(R/p) \simeq HR/p \wedge_{\THH^{[n],R}(R/a;R/p)}
  \THH^{[n],R}(R/p). \]
However, by Lemma~\ref{lem:ap->pp} the map $\THH^{[n],R}(R/a;R/p) \rto
\THH^{[n],R}(R/p)$ factors through $HR/p$. This proves the following
result about higher order Shukla homology: 
\begin{prop} \label{prop:shuklaRaRp} For $R$, $p$, and $a$ as above,
\begin{align*} 
\THH^{[n],R/a}(R/p) & \simeq 
 (HR/p \wedge_{\THH^{[n],R}(R/a;R/p)} \wedge HR/p)  \wedge_{HR/p}
  \THH^{[n],R}(R/p) \\ 
 & \simeq \THH^{[n+1],R}(R/a;R/p) \wedge_{R/p}
  \THH^{[n],R}(R/p).
\end{align*}
\end{prop}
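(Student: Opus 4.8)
The plan is to read the result off by combining the coequalizer identification of $\THH^{[n],R/a}(R/p)$ recorded just above with the factorization of Lemma~\ref{lem:ap->pp}, and then to recognize the resulting relative smash square over $\THH^{[n],R}(R/a;R/p)$ as one more application of the bar construction, \ie\ as $\THH^{[n+1],R}(R/a;R/p)$. In other words, the two sentences preceding the statement are the whole argument, and what remains is to spell out the bookkeeping of module structures.

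First I would invoke Lemma~\ref{lem:coequ} for the sequence of cofibrations $S \ra R \ra R/a \ra R/p \ra R/p$ with $X = S^n$, which gives
\[\THH^{[n],R/a}(R/p) \simeq HR/p \wedge^L_{\THH^{[n],R}(R/a;R/p)} \THH^{[n],R}(R/p),\]
where, writing $T := \THH^{[n],R}(R/a;R/p)$, the left factor $HR/p$ is a $T$-module via the augmentation and $\THH^{[n],R}(R/p)$ is a $T$-module via the map $\varphi_n$ induced by $R/a \ra R/p$. By Lemma~\ref{lem:ap->pp} the map $\varphi_n$ factors in $h\A_{HR/p}$ through $HR/p$; since $HR/p$ is both initial and terminal in $\A_{HR/p}$, this factorization is necessarily the augmentation of $T$ followed by the unit of $\THH^{[n],R}(R/p)$. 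Hence, in the homotopy category of $T$-modules, $\THH^{[n],R}(R/p) \simeq HR/p \wedge^L_{HR/p} \THH^{[n],R}(R/p)$ with the outer $HR/p$ carrying the $T$-module structure given by the augmentation. Associativity of the derived relative smash product then yields
\[\THH^{[n],R/a}(R/p) \simeq \left( HR/p \wedge^L_{T} HR/p \right) \wedge^L_{HR/p} \THH^{[n],R}(R/p),\]
which is the first displayed equivalence. Finally, the two‑hemisphere homotopy pushout decomposition of $S^{n+1}$ as two cones glued along $S^n$ — exactly the one used in the proof of Proposition~\ref{prop:same} — identifies $HR/p \wedge^L_T HR/p$ with $\THH^{[n+1],R}(R/a;R/p)$ (equivalently, with $\Sh^{[n+1],R}(R/a;R/p)$), and substituting this in gives the second equivalence once we write $\wedge_{R/p}$ for $\wedge^L_{HR/p}$.

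The one step that needs care — and the only real obstacle, since the substantive work is already done in Lemma~\ref{lem:ap->pp} — is the transition from the $h\A_{HR/p}$‑level factorization of $\varphi_n$ to a genuine manipulation of derived smash products: one must either lift the augmentation and unit to honest maps of cofibrant–fibrant commutative $HR/p$‑algebras representing the given homotopy class, or observe that $HR/p \wedge^L_{T}(-)$ descends to a functor on the homotopy category of $T$‑modules and therefore depends only on the isomorphism class of the $T$‑module structure on $\THH^{[n],R}(R/p)$. In the $S$‑module setting of \cite{ekmm} this is routine, and once it is in place the associativity isomorphism for the two‑sided bar construction does the rest; I do not expect any further difficulty.
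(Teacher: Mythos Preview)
Your proposal is correct and follows exactly the same approach as the paper: the paper's proof is literally the two sentences you quote (apply Lemma~\ref{lem:coequ} and then use Lemma~\ref{lem:ap->pp} to factor the action through $HR/p$), together with the hemisphere identification of $HR/p \wedge^L_T HR/p$ with $\THH^{[n+1],R}(R/a;R/p)$. Your write-up is in fact more careful than the paper's about the module-structure bookkeeping and the passage from the $h\A_{HR/p}$-factorization to the derived smash product, which the paper leaves implicit.
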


This recovers the calculation of $\Sh_*^{\Z/p^2}(\Z/p)$ from
\cite[5.2]{BP}. It also explains why these Shukla calculations are
more involved than Shukla homology calculations of the form
$\Sh_*^R(R/x)$ where $x$ is a regular element. In the latter case we
just obtain a divided power algebra over $R/x$ on a generator of
degree two, whereas for all $m \geq 2$ 
\begin{align*}
\Sh_*^{\Z/p^m}(\Z/p) & \cong \Sh^{[2],\Z}_*(\Z/p^m; \Z/p)
\otimes_{\Z/p} \Sh^{\Z}_*(\Z/p) \\
& \cong \bigotimes_{i \geq 0}
\Lambda(\varepsilon(\varrho^i(\tau_1^{(p^m)}))) 
\otimes \Gamma_{\Z/p}(\varphi^0\varrho^i(\tau_1^{(p^m)})))
\otimes_{\Z/p} \Gamma_{\Z/p}(\varrho^0(\tau_1^{(p)})).    
\end{align*}

We are now ready to prove the main splitting result:

\begin{thm} \label{thm:Zp^m} If $R$ is a commutative ring and if $p,a\in R$ are 
elements which are not zero divisors for which $(p)$ is a maximal ideal
and $a\in(p)^2$, then 
  \[\THH^{[n]}(R/a; R/p) \simeq \THH^{[n]}(R; R/p) \wedge^L_{HR/p} \THH^{[n],
      R}(R/a;R/p).\]
\end{thm}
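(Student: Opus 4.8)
The plan is to reduce the statement, via the juggling formula, to the relative Shukla computation of Proposition~\ref{prop:shuklaRaRp}. First I would apply Corollary~\ref{cor:juggling} (equivalently Theorem~\ref{thm:juggling} with $\Sigma X = S^n$) to the sequence of cofibrations $S \to S \to HR \to HR/a \to HR/p$. For $n \geq 1$ this produces an equivalence of augmented commutative $HR/p$-algebras
\[
  \THH^{[n]}(R/a;R/p) \simeq \THH^{[n]}(R;R/p) \wedge^L_{\THH^{[n-1],R}(R/p)} \THH^{[n-1],R/a}(R/p),
\]
in which $\THH^{[n-1],R/a}(R/p)$ is regarded as a $\THH^{[n-1],R}(R/p)$-algebra through the map induced by $R \to R/a$. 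The case $n=0$ is immediate from Lemma~\ref{lem:coequ} together with the $n=0$ instance of Lemma~\ref{lem:ap->pp}, so I will assume $n \geq 1$ in what follows.

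Next I would feed in Proposition~\ref{prop:shuklaRaRp} with $n$ replaced by $n-1$; this is exactly the step where the hypothesis $a \in (p)^2$ enters, since that proposition rests on Lemma~\ref{lem:ap->pp}. It gives
\[
  \THH^{[n-1],R/a}(R/p) \simeq \THH^{[n],R}(R/a;R/p) \wedge^L_{HR/p} \THH^{[n-1],R}(R/p),
\]
and, on inspecting its proof (an application of Lemma~\ref{lem:coequ} followed by the factorization of Lemma~\ref{lem:ap->pp}), this is an equivalence of $\THH^{[n-1],R}(R/p)$-algebras in which $\THH^{[n-1],R}(R/p)$ acts through the second tensor factor. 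Substituting this into the previous display, the inner term becomes an induced algebra $\THH^{[n],R}(R/a;R/p) \wedge^L_{HR/p} \THH^{[n-1],R}(R/p)$ on which $\THH^{[n-1],R}(R/p)$ acts only through the second factor, so the base-change identity $M \wedge^L_V (N \wedge^L_{HR/p} V) \simeq M \wedge^L_{HR/p} N$ (valid in $S$-modules for $M$ a commutative $V$-algebra, $N$ a commutative $HR/p$-algebra and $HR/p \to V$ the structure map) collapses the expression to
\[
  \THH^{[n]}(R/a;R/p) \simeq \THH^{[n]}(R;R/p) \wedge^L_{HR/p} \THH^{[n],R}(R/a;R/p),
\]
which is the assertion.

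The part that needs care is the bookkeeping of module structures rather than any single delicate estimate. Specifically, one must check that the two a priori different $\THH^{[n-1],R}(R/p)$-algebra structures on $\THH^{[n-1],R/a}(R/p)$ — the one appearing in the juggling formula and the one appearing in Proposition~\ref{prop:shuklaRaRp} — agree; both are induced by $R \to R/a$, but in view of the warning in the Remark after Corollary~\ref{cor:juggling} this genuinely has to be verified. One must also check that the $HR/p$-module structure on $\THH^{[n]}(R;R/p)$ produced by the base-change step is the canonical one, i.e.\ that the composite $HR/p \to \THH^{[n-1],R}(R/p) \to \THH^{[n]}(R;R/p)$ is the unit; this holds because the structure map $\THH^{[n-1],R}(R/p) \to \THH^{[n]}(R;R/p)$ coming out of the juggling formula is a morphism of augmented commutative $HR/p$-algebras. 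Carrying out the whole argument in the category of augmented commutative $HR/p$-algebras, and identifying every structure map with a canonical one (a unit, an augmentation, or a map induced by $R \to R/a$), is what makes these verifications routine.
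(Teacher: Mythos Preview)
Your proof is correct and is essentially the paper's own argument: you combine the juggling formula for $S \to HR \to HR/a \to HR/p$ with Proposition~\ref{prop:shuklaRaRp} (shifted down by one) and then cancel the $\THH^{[n-1],R}(R/p)$-factor, which is exactly what the paper does by pasting two homotopy pushout squares. Your explicit verification that the two $\THH^{[n-1],R}(R/p)$-algebra structures on $\THH^{[n-1],R/a}(R/p)$ agree is welcome, since the paper only gestures at this with the phrase ``with the maps of those formulas.''
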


\begin{proof}
Recall that in the category of commutative algebras, the smash product
is the same as the pushout. Consider the following diagram: 
\[\xymatrix{
HR/p \ar[rr] \ar[d] & & \THH^{[n-1],R}(R/p;R/p) \ar[rr]\ar[d]
      & & \THH^{[n]}(R;R/p) \ar[d] \\
\THH^{[n],R}(R/a;R/p) \ar[rr] && \THH^{[n-1],R/a}(R/p;R/p)
      \ar[rr] && \THH^{[n]}(R/a;R/p). }\] 
By Proposition \ref{prop:shuklaRaRp} the left square is a homotopy
pushout square and the right square is a homotopy pushout 
square by the juggling formula \ref{thm:juggling}, with the maps of
those formulas. Thus the  outside of the diagram also gives a homotopy pushout,
producing the formula 
  \[\THH^{[n]}(R/a;R/p) \simeq \THH^{[n]}(R;R/p)
    \wedge^L_{HR/p} \THH^{[n],R}(R/a;R/p),\]
as desired.
\end{proof}

\section{The Loday construction of products}
\label{sec:sums}
We establish a splitting formula for Loday constructions of products of
ring spectra. This result is probably well-known,
but as we will need it later, we provide a proof.  In the case of $X =
S^1$ such a splitting is proved for connective ring spectra in 
\cite{dmcc} in the context of 'ring functors'. See also
\cite[Proposition 4.2.4.4]{dgm}.

For two ring spectra $A$ and $B$ we consider their product $A
\times B$ with the multiplication 
\[ (A \times B) \wedge (A \times B) \ra  A \times B \]
that is induced by the maps 
$(A \times B) \wedge (A \times B) \ra  A$ and $(A \times B) \wedge
(A \times B) \ra  B$ that are given by the projection maps to $A$
and $B$ and the multiplication on $A$ and $B$:

\[
  \xymatrix{(A \times B) \wedge (A \times B) \ar[rr]^(0.6){\text{pr}_A
      \wedge \text{pr}_A} \ar[d]_{\text{pr}_B \wedge \text{pr}_B} & &
    A \wedge A \ar[r]^{\mu_A}& A \\ 
    B \wedge B \ar[d]_{\mu_B}& & & \\
  B & & &}
\]
For $X= S^0$ we obtain  
$$\L_{S^0}(A \times B) = (A \times B) \wedge (A \times B)$$ 
and this is equivalent to $A \wedge A \vee A \wedge B \vee B
\wedge A \vee  
B \wedge B$ whereas $\L_{S^0}(A) \times \L_{S^0}(B)$ 
is equivalent to $A \wedge A \vee B \wedge B$ so in this case 
$\L_{S^0}(A \times B)$ is not equivalent to
$\L_{S^0}(A) \vee \L_{S^0}(B)$. In general, if 
a simplicial set has finitely many connected components, say $X = X_1
\sqcup \ldots \sqcup X_n$, then 
$$
\L_X(A \times B)  \simeq \L_{X_1}(A \times B) \wedge \ldots
\wedge \L_{X_n}(A \times B)$$ 
so it suffices to study $\L_X(A \times B)$ for connected simplicial
sets $X$.  We will
first consider the case $X=S^1$, where $\L_{S^1}$ with respect to
the minimal simplicial model of the circle is $\THH=\THH^{[1]}$, and
then use that special case to prove the result for general connected 
finite simplicial sets $X$.

We thank Mike Mandell who suggested to use Brooke Shipley's version of $\THH$
in the setting of symmetric spectra. Brooke Shipley shows in
\cite{shipley} that a variant of B\"okstedt's model 
for $\THH$ in symmetric spectra of simplicial sets
is equivalent to the version that mimics the Hochschild complex and she
proves several important features of this construction. 
See also \cite{ps} for a correction of the proof of the comparison.

\begin{prop}\label{prop:THHsum}
  For symmetric ring spectra $A$ and $B$, the product of the projections induces a stable
  equivalence
 $\THH(A \times B) \to \THH(A) \times \THH(B)$. 
\end{prop}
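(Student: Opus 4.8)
The plan is to exploit the explicit Hochschild-style model of $\THH$ in symmetric spectra provided by Shipley, in which the $q$-simplices of $\THH(A)$ are (a cofibrant replacement of) the $(q+1)$-fold smash power $A^{\wedge(q+1)}$ with the cyclic simplicial structure coming from multiplication in $A$. First I would reduce to a levelwise (i.e., simplicial-degree-wise) statement: since geometric realization and the product $\times$ of symmetric spectra both commute with colimits in the appropriate sense, and since a map of simplicial symmetric spectra that is a levelwise stable equivalence realizes to a stable equivalence, it suffices to show that in each simplicial degree $q$ the product of the projections
\[
(A\times B)^{\wedge(q+1)} \longrightarrow A^{\wedge(q+1)} \times B^{\wedge(q+1)}
\]
is a stable equivalence, and moreover that this collection of maps is compatible with the cyclic/simplicial structure maps on both sides (so that it assembles into a map of simplicial objects in the first place). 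The compatibility with faces and degeneracies is where one has to be a little careful, because the face maps use the multiplication on $A\times B$, which by construction is the product of $\mathrm{pr}_A\wedge\mathrm{pr}_A$ followed by $\mu_A$ and $\mathrm{pr}_B\wedge\mathrm{pr}_B$ followed by $\mu_B$; unwinding definitions shows precisely that under the identification of $(A\times B)^{\wedge(q+1)}$ with a wedge of smash powers of $A$'s and $B$'s indexed by functions $\{0,\dots,q\}\to\{A,B\}$, the multiplication collapses each coordinate onto $A$ or $B$ exactly as needed for the two projections to be simplicial.

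The key computational input is the distributivity of smash over finite coproducts of symmetric spectra: $(A\times B)$ is stably equivalent to $A\vee B$, and smashing out,
\[
(A\times B)^{\wedge(q+1)} \simeq (A\vee B)^{\wedge(q+1)} \simeq \bigvee_{f\colon\{0,\dots,q\}\to\{A,B\}} \bigwedge_{i=0}^{q} f(i),
\]
a wedge over $2^{q+1}$ functions. The product of the projections sends this wedge to $A^{\wedge(q+1)}\times B^{\wedge(q+1)}\simeq A^{\wedge(q+1)}\vee B^{\wedge(q+1)}$ by killing every summand that involves \emph{both} an $A$ and a $B$ factor, and mapping the constant-$A$ and constant-$B$ summands isomorphically to the two evident targets. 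So I would verify that each ``mixed'' summand $\bigwedge_i f(i)$ with $f$ non-constant is stably contractible: this is because such a smash power contains a factor of the form $A\wedge B$ obtained from a wedge summand, and more to the point, once we pass to the realization the mixed terms contribute an acyclic subcomplex — concretely, one can filter the bisimplicial (or bicomplex) object by the ``number of $B$'s'' and observe that the multiplication-induced face maps force these mixed pieces to cancel, exactly as in the classical computation $\THH(A\times B)\simeq\THH(A)\times\THH(B)$ for ordinary rings via the Hochschild complex. Alternatively, and perhaps more cleanly, one can argue that $\THH$, built as a realization of a smash-power construction, sends the idempotent decomposition $A\times B$ into a corresponding idempotent decomposition after realization, the mixed terms being killed because the two idempotents $e_A,e_B\in\pi_0(A\times B)$ are orthogonal and central.

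The main obstacle I anticipate is not the levelwise wedge decomposition, which is formal, but rather making rigorous the claim that the mixed summands die \emph{after} taking the diagonal/realization of the cyclic object — i.e., that the ``obvious'' algebraic cancellation in the Hochschild complex of a product ring lifts to the spectrum level in Shipley's model without running into point-set subtleties about cofibrant replacement and the fact that the smash power functor used in the model is only homotopically well-behaved on cofibrant objects. I would handle this by choosing $A$ and $B$ (hence $A\times B$) to be cofibrant symmetric ring spectra from the start, so that all the relevant smash powers compute the derived smash, and then invoke that a levelwise stable equivalence of (good) simplicial symmetric spectra realizes to a stable equivalence (this is standard, e.g. via the skeletal filtration and the fact that stable equivalences are closed under the relevant homotopy colimits). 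With that in place the proof reduces to the purely combinatorial identification above, and the passage to general connected finite $X$ — already flagged before the statement as following from the $S^1$ case — is carried out separately in the surrounding text.
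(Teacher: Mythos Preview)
Your plan contains a genuine gap at its central step. You propose to reduce to showing that
\[
(A\times B)^{\wedge(q+1)} \longrightarrow A^{\wedge(q+1)} \times B^{\wedge(q+1)}
\]
is a stable equivalence in each simplicial degree, but this is simply false for $q\geq 1$: the left-hand side has $2^{q+1}$ wedge summands and the right has two, and the mixed summands $A\wedge B$, $A\wedge A\wedge B$, etc.\ are not contractible (take $A=B=S$). You notice this yourself a few lines later and retreat to the weaker claim that the mixed terms ``contribute an acyclic subcomplex after realization,'' but neither of your two suggested mechanisms is made precise. The filtration by ``number of $B$'s'' does not obviously yield acyclic associated graded pieces, and the idempotent argument, while morally right, does not by itself produce the required homotopy at the spectrum level.

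The paper handles exactly this point, and the mechanism is worth knowing. First, one passes from simplicial to \emph{presimplicial} objects (homotopy colimits over $\Delta^{op}_f$); this is essential because the retraction $r\colon \THH_\bullet(A\vee B)\to \THH_\bullet(A)\vee\THH_\bullet(B)$ that kills mixed summands is \emph{not} compatible with degeneracies (the unit of $A\times B$ does not live in $A\vee B$), so $A\vee B$ is treated as a non-unital ring spectrum and only face maps are available. Second, the actual cancellation of mixed terms is achieved by writing down an explicit presimplicial homotopy $h_j\colon (A\vee B)^{\wedge(n+1)}\to (A\vee B)^{\wedge(n+2)}$ between $j\circ r$ and the identity, built using the unit maps $\eta_A,\eta_B$ on the pure summands and the zero map on mixed ones. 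This is the content of the auxiliary lemma immediately following the proposition, and it is the step your outline is missing.
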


In the following proof we denote by $\THH$ the model of $\THH$ defined in
\cite[Definition 4.2.6]{shipley}. This is no abuse of notation:
Let $A$ and $B$ be $S$-cofibrant symmetric ring
spectra \cite[Theorem 2.6]{shipley-conv}. Then by
\cite[Theorem 4.2.8]{shipley} and \cite[Theorem 3.6]{ps}
$\THH(A)$ and $\THH(B)$ are stably equivalent to $\THH(A)$ and $\THH(B)$ in
our sense. As $\THH(-)$ sends stable equivalences to stable equivalences
(\cite[Corollary 4.2.9]{shipley}) we can choose an $S$-cofibrant 
replacement of $A \times B$, $(A \times B)^c$, and get that $\THH(A \times B)$
is stably equivalent to $\THH((A \times B)^c)$ and this in turn is stably
equivalent to our notion of $\THH$.

\begin{proof}
Note that for any symmetric ring spectrum $R$, $\THH(R)$ is defined as the
diagonal of a bisimplicial symmetric spectrum $\THH_\bullet(R)$
\cite[4.2.6]{shipley}, where one of the simplicial directions 
comes from the $\THH$-construction and the other one comes from the fact
that we are working with symmetric spectra in simplicial sets. In
\cite[p.~4101]{ps} the authors use the geometric realization instead of the diagonal, but this
does not cause any difference in the arguments. 

We will start by showing that there is a chain of stable equivalences
  between  $\THH(A \times B)$ and $\THH(A) \times \THH(B)$. 
We use the following chain of identifications:
\[\xymatrix@C=0.7ex{
    \THH(A \times B) \ar@{.>}[dddd]_{(\text{pr}_A,\text{pr}_B)}& \ar[l]_(0.6)\simeq^(0.6){1)}
    \mathrm{hocolim}_{\Delta^{op}} \THH_\bullet(A \times B) & \ar[l]_\simeq^{2)}
    \mathrm{hocolim}_{\Delta^{op}_f} {\THH_\bullet(A \times B)}
    \\
 & & \mathrm{hocolim}_{\Delta^{op}_f} {\THH_\bullet(A \vee B)}  \ar[u]_\simeq^{3)} \ar@<1ex>[d]^R \\
 && \mathrm{hocolim}_{\Delta^{op}_f} {(\THH_\bullet(A) \vee \THH_\bullet(B))}
 \ar[d]_\simeq^{4)} \ar@<1ex>[u]^J\\
 & &  \mathrm{hocolim}_{\Delta^{op}_f} {\THH_\bullet(A)} \vee
 \mathrm{hocolim}_{\Delta^{op}_f} {\THH_\bullet(B)}
 \ar[d]_\simeq^{2)}\\
 \THH(A) \times \THH(B) & \ar[l]_\simeq^{5)} {\THH(A) \vee \THH(B)} &
 \mathrm{hocolim}_{\Delta^{op}} {\THH_\bullet(A)} \vee
 \mathrm{hocolim}_{\Delta^{op}} {\THH_\bullet(B)} \ar[l]_(0.65)\simeq^(0.65){1)}\\
    }
\]

1) With \cite[Proposition 4.27]{shipley} we obtain a level equivalence between 
$\THH(A\times B)$ and 
$\mathrm{hocolim}_{\Delta^{op}}\THH_\bullet(A \times B)$. Similarly, in the
bottom row we can identify the homotopy colimit with $\THH$. This does not need
any cofibrancy assumptions.

2) Let $\Delta_f$
denote the subcategory of $\Delta$ containing all objects but only injective
maps. The induced map on homotopy colimits 
\[ \mathrm{hocolim}_{\Delta^{op}_f} \THH_\bullet(A \times B) \ra
    \mathrm{hocolim}_{\Delta^{op}} \THH_{\bullet}(A \times B) \]
is an equivalence because the homotopy colimit of symmetric spectra is 
defined levelwise \cite[Definition 2.2.1]{shipley} and in every 
simplicial degree $p$ and every level $\ell$, $\THH_p(A \times B)(\ell)$ 
is cofibrant (because it is just a simplicial set) and hence the 
claim follows as in \cite[Proposition 20.5]{dugger}. An analoguous argument
applies in the second occurence of 2). 

3) 
We consider $A \vee B$ as a non-unital commutative ring spectrum via
the multiplication map
\[ (A \vee B) \wedge (A \vee B) \simeq (A \wedge A) \vee (A \wedge B) \vee
  (B \wedge A) \vee (B \wedge B) \ra (A \wedge A) \vee (B \wedge B)
  \ra A \vee B\]
where the first map sends the mixed terms to the terminal spectrum and the
second one uses the multiplication in $A$ and $B$. Correspondingly,
$\THH_\bullet(A \vee B)$ is a presimplicial spectrum, that only uses the face
maps of the structure maps of $\THH$.

The canonical map $A \vee B \ra A \times B$ is a stable equivalence of
non-unital symmetric ring spectra and hence by adapting the argument in the
proof of \cite[Corollary 4.2.9]{shipley} we get a $\pi_*$-equivalence of the
corresponding presimplicial objects
\[ \THH_\bullet(A \vee B) \simeq \THH_\bullet(A \times B).\]
Pointwise level equivalences give level equivalences on homotopy colimits
\cite[Proposition 2.2.2]{shipley}, so the map in 3) is a level equivalence.

4) Homotopy colimits commute with sums. 

5) The product is stably equivalent to the sum.

\smallskip
To have a chain of stable equivalences between  $\THH(A \times B)$ and $\THH(A) \times \THH(B)$ it remains to
understand the effect of the maps $J$ and $R$ and we will control them in Lemma \ref{actualhomotopy}
below.  

We claim that the product of the projections 
\[\text{pr}_A \colon  A\times B\to A, \quad \text{pr}_B\colon  A\times B \to B\] 
produces an equivalence.  Observe that we can apply the projection $\text{pr}_A$ to every stage in our diagram.
On $A\vee B$, this will induce the collapse map $A\vee B \to A$.
By applying $\text{pr}_A$ to the entire chain of equivalences,
we get a diagram of equivalences between various versions of $\THH(A)$.
We can do the same for $\text{pr}_B$.
This gives a commutative diagram
\[ \xymatrix{
\THH(A \times B)   \ar[d]_{(\text{pr}_A, \text{pr}_B)} &
\mathrm {chain\ of\  maps} 
&  \THH(A) \times \THH(B)  \ar[d]^{\text{pr}_A \times \text{pr}_B} \\
 \THH(A) \times \THH(B) &
 (\text{pr}_A( \mathrm{the\ chain}), \text{pr}_B(\mathrm{ the\ chain}))  
& \THH(A) \times \THH(B)  \\ 
}\]
where the chain is a zigzag of arrows going both ways.  The chain on top has all its stages equivalences.
By the above discussion, so is the product of the chains on the bottom.  And the map on the right is the identity.
So working step by step in the zigzag from the right, we show that the pair of projections
$(\text{pr}_A,\text{pr}_B)$ induces equivalences
at all the intermediate steps, until we get to the leftmost $(\text{pr}_A, \text{pr}_B)$ which is therefore also
an equivalence.
\end{proof}

We consider the map
\[j \colon \THH_\bullet(A) \vee \THH_\bullet(B) \ra
  \THH_\bullet(A \vee B)\]
that is induced by the inclusions $A \hookrightarrow A
\vee  B$ and $B \hookrightarrow A \vee B$. We let $J$ be the induced map
on the homotopy colimit. It has a retraction
$R = \mathrm{hocolim}_{\Delta^{op}_f}r$ with
\[ r
  \colon \THH_\bullet(A \vee B) \ra \THH_\bullet(A) \vee \THH_\bullet(B)\]
that sends all mixed
smash products to the terminal object. Note that $R \circ J = \mathrm{id}$.

\begin{lem}\label{actualhomotopy}
There is a presimplicial homotopy $j \circ r \simeq \mathrm{id}$.
\end{lem}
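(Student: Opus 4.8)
The plan is to write down an explicit presimplicial homotopy, following the standard model for the presimplicial homotopy that detects when a retraction onto a direct summand is a deformation retraction. Recall that $\THH_\bullet(A\vee B)$ in presimplicial degree $q$ is built from the smash powers $(A\vee B)^{\wedge(q+1)}$, which distribute as the wedge over all words $w\in\{A,B\}^{q+1}$ of the smash products $C_{w_0}\wedge\cdots\wedge C_{w_q}$ (with $C_A=A$, $C_B=B$). The map $r$ collapses to the terminal object every summand indexed by a non-constant word, and $j$ is the inclusion of the two constant words. So $j\circ r$ is the idempotent that projects $\THH_\bullet(A\vee B)$ onto its ``pure'' part. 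A presimplicial homotopy from $j\circ r$ to $\mathrm{id}$ is a collection of maps $h_i\colon \THH_q(A\vee B)\to \THH_{q+1}(A\vee B)$ for $0\le i\le q$ satisfying the usual presimplicial homotopy identities with the face maps $d_0,\dots,d_q$.

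First I would fix the degree $q$ and describe $h_i$ summand by summand on each word $w=(w_0,\dots,w_q)$. The idea is the familiar one for ``inserting the identity'': $h_i$ should insert the relevant unit factor at position $i+1$, but since we are in the non-unital wedge $A\vee B$ the ``unit'' must be handled via the terminal/zero object. Concretely, for a word $w$ that is already constant (all $A$'s or all $B$'s) one uses the genuine degeneracy-type insertion of the unit of $A$ (resp.\ $B$), which lands in the constant word of length $q+2$; for a non-constant word one sends the whole summand to the terminal object. One checks that with this prescription the maps $h_i$ assemble into a map on the wedge, and that the presimplicial homotopy identities
\[
d_j h_i = h_{i-1} d_j \ (j<i),\quad d_i h_i = d_i h_{i-1},\quad d_j h_i = h_i d_{j-1}\ (j>i+1),
\]
together with $d_0 h_0 = \mathrm{id}$ and $d_{q+1}h_q = j\circ r$, hold. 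These identities reduce to checking that, on each word, the face maps of $\THH$ (which multiply adjacent factors, or act at the basepoint) interact correctly with the insertion; on constant words this is exactly the simplicial identity relating degeneracies and faces in the cyclic bar construction of $A$ (or $B$), and on non-constant words both sides collapse to the terminal object, so the identity is trivially satisfied once one verifies that the relevant face map of a non-constant word is again non-constant or lands where $h$ kills it.

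The main obstacle I anticipate is bookkeeping rather than conceptual: verifying that the face maps $d_j$ of $\THH_\bullet(A\vee B)$ genuinely respect the decomposition into words and carry non-constant words to summands that $h$ annihilates, so that the homotopy identities on the non-constant summands are not just ``both sides are the terminal object'' by accident but are actually compatible with how $h_i$ is defined. One has to be slightly careful at the ``wrap-around'' face map $d_{q}$ (the one that multiplies the last and first factors, or acts at the basepoint in the non-cyclic Hochschild model used in \cite{shipley}), since that is where a constant word could in principle be produced from a non-constant one or vice versa; checking that this does not break the identities $d_i h_i = d_i h_{i-1}$ and $d_{q+1}h_q = j r$ is the delicate point. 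Once the homotopy $h_\bullet$ is written out on constant words it is literally the standard contraction showing that the cyclic bar construction of a unital ring spectrum absorbs inserted units, so no new input is needed there; the content is purely in confirming the insertion is compatible with the wedge decomposition, after which passing to $\mathrm{hocolim}_{\Delta^{op}_f}$ gives the asserted homotopy $J\circ R\simeq\mathrm{id}$ and hence, combined with $R\circ J=\mathrm{id}$, the fact that $J$ and $R$ are inverse equivalences needed to complete the proof of Proposition~\ref{prop:THHsum}.
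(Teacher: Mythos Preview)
There is a genuine gap in your construction. Your prescription ``for a non-constant word one sends the whole summand to the terminal object'' is too coarse: it breaks the boundary identity $d_0 h_0 = \mathrm{id}$. On any non-constant summand---take for instance the one indexed by the word $(A,B)$ in degree $q=1$---your $h_0$ collapses it to the terminal object, so $d_0 h_0$ is the terminal map as well, while $\mathrm{id}$ is not. The same failure occurs on every mixed summand, so your homotopy never satisfies the first boundary condition except on the pure part, where there was nothing to prove. Your check that ``on non-constant words both sides collapse to the terminal object'' only covers the identities with an $h$ on each side; it cannot cover $d_0 h_0=\mathrm{id}$.

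The repair, which is what the paper does, is to let $h_j$ depend on the length of the constant \emph{prefix} of the word rather than on whether the whole word is constant. Write a summand as $A^{\wedge i_1}\wedge B^{\wedge i_2}\wedge\cdots\wedge A^{\wedge i_{k+1}}$ (allowing $i_1=0$ or $i_{k+1}=0$). Then $h_j$ inserts the unit $\eta_A$ (respectively $\eta_B$) after the first $j+1$ factors precisely when those first $j+1$ factors are all $A$ (respectively all $B$), i.e.\ when $j+1\le i_1$ (respectively $i_1=0$ and $j+1\le i_2$); otherwise $h_j$ is the terminal map. With this definition $h_0$ \emph{always} inserts a unit (the first letter is trivially a constant block of length $1$), so $d_0 h_0=\mathrm{id}$ holds on every summand; at the other end $h_n$ inserts a unit only on the two constant words, so $d_{n+1}h_n=j\circ r$ also holds. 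The intermediate identities then reduce either to the usual unit/multiplication relations on the constant prefix or to ``both sides are terminal'' once the prefix condition fails. Your anticipated bookkeeping worry about the wrap-around face is not the real obstacle; the failure of your scheme is already visible at $h_0$.
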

\begin{proof} 
We consider the $n$th presimplicial degree of $\THH_n(A \vee B)$:
\[ \THH_n(A \vee B) = (A \vee B)^{n+1}.\]
This is a sum of terms of the form $A^{\wedge i_1} \wedge B^{\wedge i_2} \wedge
\ldots \wedge B^{\wedge i_k} \wedge A^{\wedge i_{k+1}}$ for suitable $k$ with
  $0 \leq i_1, i_{k+1} $ and $0<i_j $ for $1<j<k+1$, so that $\sum_{j=.1}^{k+1} i_j=n+1$.

  Restricted to such a summand we define $h_j \colon   (A \vee B)^{n+1} \ra
  (A \vee B)^{n+2}$ for $0 \leq j \leq n$ as
\[ h_j|_{A^{\wedge i_1} \wedge B^{\wedge i_2} \wedge
    \ldots \wedge B^{\wedge i_k} \wedge A^{\wedge i_{k+1}}} =
    \begin{cases}
      \mathrm{id}_{A^{\wedge {j+1}}} \wedge \eta_A \wedge \mathrm{id}, &
      \text{ if } {j+1} \leq i_1, \\
      \mathrm{id}_{B^{\wedge {j+1}}} \wedge \eta_B \wedge \mathrm{id}, & \text{ if }
      i_1=0 \text{ and } {j+1} \leq i_2, \\
      *, & \text{ otherwise.}
    \end{cases}
\]
Then $d_0 h_0 = \mathrm{id}$, $d_{n+1}h_n = j\circ r$ and 
\[ d_ih_j = 
\begin{cases}
h_{j-1}d_i, & i < j, \\
d_ih_{j-1}, & i = j\neq 0, \\
h_jd_{i-1}, & i > j+1. 
\end{cases}
\]
\end{proof}  
This implies that $J \circ R \simeq \mathrm{id}$, so we get that 
\[ \mathrm{hocolim}_{\Delta^{op}_f}\THH_\bullet(A \vee B) \simeq
 \mathrm{hocolim}_{\Delta^{op}_f} (\THH_\bullet(A) \vee \THH_\bullet(B)).\]

For the general setting we work with \emph{commutative} ring spectra and we
return to the setting of \cite{ekmm}. 
Note that the naturality
of cofibrant replacements ensures that we get morphisms of commutative
ring spectra 
\[ A^c \leftarrow (A \times B)^c \ra B^c\]
and hence a weak equivalence (because the product of acyclic fibrations is an acyclic fibration). 
\[ (A \times B)^c \ra A^c \times B^c.\]
This proves the case of $X = *$ of the following proposition and is needed in the proof. 
\begin{prop} \label{prop:product} 
For any connected finite simplicial set $X$, the projection  maps
$\text{pr}_A \colon A\times B$ and $\text{pr}_B 
\colon A\times B \ra B$ induce an equivalence
\[ \L_X((A\times B)^c) \simeq \L_X(A^c)\times \L_X(B^c)\]
 and in particular, for all $n\geq 1$,
\[\THH^{[n]}((A\times B)^c) \simeq \THH^{[n]}(A^c) \times \THH^{[n]}(B^c).\]
\end{prop}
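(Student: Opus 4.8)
The plan is to bootstrap from the two cases we already control --- $X=*$ (settled in the paragraph preceding the statement) and $X=S^1$ (Proposition~\ref{prop:THHsum}, transported into the category of $S$-modules) --- using the fact, recalled at the start of Section~\ref{sec:truncated}, that $\L_X$ carries homotopy pushouts of pointed simplicial sets to homotopy pushouts of commutative $S$-algebras, i.e.\ to derived smash products. Since $\L_{(-)}$ is a homotopy invariant of the simplicial set, we may replace $X$ by any weakly equivalent model; taking $X$ to be a connected finite CW complex, its $1$-skeleton is homotopy equivalent to a finite wedge $\bigvee S^1 = S^1\cup_* \cdots \cup_* S^1$, and $X$ is then obtained from its $1$-skeleton by attaching cells of dimension $k\geq 2$, each attachment being a homotopy pushout of $Y\leftarrow S^{k-1}\to *$. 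Combining this with $S^{n}\simeq\Sigma S^{n-1}$, the homotopy pushout of $\ast\leftarrow S^{n-1}\to\ast$, one sees that every connected finite simplicial set is, up to weak equivalence, built from $*$ and $S^1$ by iterated homotopy pushouts. (Throughout we work with cofibrant replacements of the ring spectra, suppressed from the notation where harmless, and use that $(A\times B)^c\to A^c\times B^c$ is a weak equivalence, as noted before the statement.)

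First I would prove the following closure statement: if the asserted splitting holds for pointed connected finite simplicial sets $Y_1,Y_2,Z$, and $X$ is the homotopy pushout of $Y_1\leftarrow Z\to Y_2$, then it holds for $X$. Applying $\L_{(-)}$ (to $(A\times B)^c$, and separately to $A^c$ and $B^c$) to the pushout square gives
\[ \L_X((A\times B)^c)\ \simeq\ \L_{Y_1}((A\times B)^c)\wedge^L_{\L_Z((A\times B)^c)}\L_{Y_2}((A\times B)^c), \]
and by the inductive hypothesis, together with naturality of the Loday construction in the ring argument, the projections $\text{pr}_A,\text{pr}_B$ identify this pushout diagram with
\[ \bigl(\L_{Y_1}(A^c)\times\L_{Y_1}(B^c)\bigr)\ \wedge^L_{\L_Z(A^c)\times\L_Z(B^c)}\ \bigl(\L_{Y_2}(A^c)\times\L_{Y_2}(B^c)\bigr), \]
with the module structures being the product ones (the maps $\L_Z(C)\to\L_{Y_i}(C)$ being natural in $C$).

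Now I would invoke the standard fact that for a product $R_1\times R_2$ of commutative $S$-algebras the homotopy category of modules splits as $\mathrm{Ho}(\Mod_{R_1})\times\mathrm{Ho}(\Mod_{R_2})$ via the orthogonal idempotents $(1,0),(0,1)\in\pi_0(R_1\times R_2)$, compatibly with derived smash products; this rewrites the displayed pushout as $\bigl(\L_{Y_1}(A^c)\wedge^L_{\L_Z(A^c)}\L_{Y_2}(A^c)\bigr)\times\bigl(\L_{Y_1}(B^c)\wedge^L_{\L_Z(B^c)}\L_{Y_2}(B^c)\bigr)=\L_X(A^c)\times\L_X(B^c)$, and the resulting equivalence is the one induced by $(\text{pr}_A,\text{pr}_B)$, every identification used having been natural in the ring. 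With the closure statement in hand, the proposition follows by induction along the cell structure above, starting from the base cases $X=*$ and $X=S^1$; note that the gluings used are always over $*$ or over a sphere $S^m$ with $m\geq 1$, so disconnected gluing never occurs and the product-over-components formula is not needed. The statement for $\THH^{[n]}$ is the special case $X=S^n$ (which appears in the chain $S^1,\ S^2=\ast\cup_{S^1}\ast,\ \dots$), together with homotopy invariance of $\L_{(-)}$.

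The main obstacle is the module-category splitting step: one must check that the idempotents $(1,0),(0,1)$ genuinely split modules and derived smash products in the chosen model category of commutative $S$-algebras, and --- crucially --- that all the structure maps occurring (the augmentations $\L_Z(C)\to C$, the maps $\L_Z(C)\to\L_{Y_i}(C)$, and the cofibrant-replacement maps $(A\times B)^c\to A^c\times B^c$) respect this splitting. Once this is set up carefully, the remainder is formal manipulation of homotopy pushouts.
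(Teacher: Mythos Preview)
Your proposal is correct and follows the same overall architecture as the paper: induct on a cell structure, using as the key step that a homotopy pushout of product diagrams over a product base splits as a product of homotopy pushouts. The differences are in execution. First, the paper inducts on the simplicial dimension and glues along actual sub-simplicial-sets, which forces a four-fold edgewise subdivision trick to ensure $\partial\Delta^n$ is embedded in $X$; you work up to homotopy equivalence with homotopy pushouts throughout, which sidesteps that technicality. Second, and more substantively, the step you flag as the ``main obstacle'' --- that the derived smash product over $R_1\times R_2$ of product modules is the product of the derived smash products --- is handled differently: you invoke the idempotent decomposition of $\mathrm{Ho}(\Mod_{R_1\times R_2})$, whereas the paper argues directly that the $\L_Z(A^c)$-action on the $B$-factors (and vice versa) sends them to the terminal ring spectrum, and then appeals to the K\"unneth spectral sequence. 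Both justifications are valid; the paper's is more elementary and self-contained, yours is cleaner once the categorical splitting is in hand but requires that splitting to be set up carefully for derived smash products in the chosen model (as you acknowledge).
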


\begin{proof}
We prove the result for all finite connected simplicial sets $X$
by induction on the dimension $n$ of the top non-degenerate simplex in
$X$. Since the only finite connected simplicial set with its only
non-degenerate simplices in dimension zero is a point, the result is
obvious for $n=0$. 

\smallskip
For higher $n$, the crucial observation is that if we have simplicial
sets $X$, $Y$, and $Z$ so that $Z$ is a non-empty subset of both $X$
and $Y$, then if the projection maps $(A \times B)^c \ra A^c$ and
$(A \times B)^c \ra B^c$ 
induce equivalences as given in the statement of this proposition for 
$X$, $Y$ and $Z$, then we also obtain an equivalence 
\begin{equation}\label{eqn:glue}
 \xymatrix@1{
{\L_{X\cup_Z Y}((A \times B)^c)} 
\ar[r]^(0.4){\simeq} &
 {\L_{X\cup_Z Y}(A^c) \times \L_{X\cup_Z Y}(B^c). } }
\end{equation}
This is because then 
\begin{align*}
\L_{X\cup_Z Y}((A \times B)^c) & \simeq  \L_X((A\times B)^c) \wedge_{\L_Z((A\times B)^c)}
    \L_Y((A\times B)^c) \\
 & \simeq (\L_X(A^c)\times \L_X(B^c)) \wedge^L_{\L_Z(A^c)\times \L_Z(B^c)}
   (\L_Y(A^c)\times \L_Y(B^c))\\ 
& \simeq 
(\L_X(A^c)\wedge_{\L_Z(A^c)} \L_Y(A^c)) \times (\L_X(B^c)\wedge_{\L_Z(B^c)} \L_Y(B^c))\\
& \simeq  \L_{X\cup_Z Y} (A^c)\times \L_{X\cup_Z Y}(B^c). 
\end{align*}
For the first and last equivalence we use that the Loday construction
sends pushouts to homotopy pushouts and that for a cofibrant commutative ring spectrum
the map $\L_Z(R) \ra \L_Y(R)$ is a cofibration. For the second equivalence we use our assumption that
the proposition holds for $X$, $Y$ and $Z$.

The third equivalence 
holds because $\L_Z(A^c)$ acts trivially on $\L_X(B^c)$ and on $\L_Y(B^c)$ thus
it sends the corresponding factors to the terminal ring spectrum; the same
holds for the action of $\L_Z(B^c)$ on $\L_X(A^c)$ and on $\L_Y(A^c)$. Therefore a K\"unneth
spectral sequence argument shows that we obtain a weak equivalence.

\smallskip
For $n=1$, we use homotopy invariance of the Loday construction and the
fact that any  finite connected simplicial set with non-degenerate
simplices only in dimensions $0$ and $1$ is homotopy equivalent  
to $\bigvee_{i=1}^m S^1$ for some $m\geq 0$.  If $m=0$, we deduce the
proposition 
from the $n=0$ case above; if $m=1$, we use Proposition \ref{prop:THHsum},
and for $m>1$, we use induction and Equation (\ref{eqn:glue}).  

\smallskip
For the inductive step, assume that $n>1$ and we know that the
proposition holds for any finite connected simplicial set with
non-degenerate cells in dimensions $<n$, and in particular for
$\partial \Delta^n$, the boundary of the standard $n$-simplex.  Assume
that we have a finite simplicial set $X$ for which the proposition holds.  We
then prove that the proposition also holds for $X \cup_{\partial
  \Delta^n} \Delta^n$, that is: $X$ with an additional $n$-simplex
glued to it along the boundary.  Without loss of generality, we may
assume that the boundary of the new $n$-simplex is embedded in $X$: if
it is not, apply four-fold edgewise subdivision to everything, and
then $X \cup_{\partial \Delta^n} \Delta^n$ will consist of the central
small $n$-simplex inside the original $n$-simplex that was added that
does not touch the boundary of the originally added $n$-simplex and all
the rest of the subdivided complex.  But the rest of the subdivided
complex is homotopy equivalent to the original $X$, so the proposition
%has to 
holds for it, and the central small $n$-simplex does indeed have
its boundary embedded in the four-fold edgewise subdivision of  
$X \cup_{\partial \Delta^n} \Delta^n$.

Then by assumption, the proposition holds for $X$, by the inductive
hypothesis it holds for $\partial \Delta^n$, by homotopy invariance it
holds for $\Delta^n\simeq *$, and so by Equation (\ref{eqn:glue}) it
holds for $X \cup_{\partial \Delta^n} \Delta^n$. 
\end{proof}
For later use we need a version of Proposition \ref{prop:product} with 
coefficients. Again, we choose cofibrant models $(A \times B)^c$, $A^c$ and $B^c$
and we assume $M^c$ is a cofibrant $A^c$-module spectrum, $N^c$ is a cofibrant $B^c$-module
spectrum and $(M \times N)^c$ is a cofibrant $(A \times B)^c$-module spectrum, such that
these cofibrant replacements are compatible with the projection maps on $(A \times B)^c$ and
$(M \times N)^c$. 
\begin{cor} \label{cor:sums}
For all connected pointed finite simplicial sets there is an
equivalence 
\[\L_X((A \times B)^c; (M \times N)^c) \ra \L_X(A^c; M^c) \times \L_X(B^c; N^c).\]
\end{cor}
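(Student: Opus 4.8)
The plan is to deduce the corollary from Proposition~\ref{prop:product} by moving the coefficient module out of the Loday construction. The basic observation is that for a pointed connected simplicial set $X$, a cofibrant commutative ring spectrum $A^c$ and a cofibrant $A^c$-module $M^c$, there is a natural equivalence $\L_X(A^c;M^c)\simeq \L_X(A^c)\wedge^L_{A^c}M^c$, where $\L_X(A^c)$ is regarded as an $A^c$-algebra via the inclusion of the basepoint. Indeed, in each simplicial degree $p$ one has $\L_X(A^c;M^c)_p = M^c\wedge\bigwedge_{x\in X_p\setminus *}A^c$ and $\L_X(A^c)_p = A^c\wedge\bigwedge_{x\in X_p\setminus *}A^c$, so that $\L_X(A^c;M^c)_p$ is obtained from $\L_X(A^c)_p$ by smashing the basepoint factor with $M^c$ over $A^c$; this identification is compatible with all simplicial structure maps, and the cofibrancy hypotheses guarantee that the geometric realization computes the derived smash product. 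The same applies to $A^c\times B^c$ with $M^c\times N^c$, and the compatibility of the chosen cofibrant replacements with the projections makes all of this natural.

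Granting this, I would argue in three moves. First, by the identity above, $\L_X((A\times B)^c;(M\times N)^c)\simeq \L_X((A\times B)^c)\wedge^L_{(A\times B)^c}(M\times N)^c$; Proposition~\ref{prop:product}, whose equivalence is induced by the projections and is hence an equivalence of $(A\times B)^c$-algebras via the basepoint, identifies the first factor with $\L_X(A^c)\times\L_X(B^c)$. Second, since $(A\times B)^c\to A^c\times B^c$ and $(M\times N)^c\to M^c\times N^c$ are weak equivalences (as noted just before Proposition~\ref{prop:product}), we may replace the smash over $(A\times B)^c$ by the derived smash of $\L_X(A^c)\times\L_X(B^c)$ with $M^c\times N^c$ over $A^c\times B^c$. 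Third, the ring $A^c\times B^c$ carries orthogonal idempotents, so $\L_X(A^c)$ is an $A^c\times B^c$-module on which the $B^c$-summand acts through the zero map, and dually for $M^c$ and $A^c$; hence the mixed smash products $\L_X(A^c)\wedge^L_{A^c\times B^c}N^c$ and $\L_X(B^c)\wedge^L_{A^c\times B^c}M^c$ are trivial, while $\L_X(A^c)\wedge^L_{A^c\times B^c}M^c\simeq \L_X(A^c)\wedge^L_{A^c}M^c$ and similarly for $B^c$. This can be read off the K\"unneth spectral sequence, whose $\Tor$ over $\pi_*(A^c\times B^c)=\pi_*A^c\times\pi_*B^c$ splits accordingly. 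Therefore $(\L_X(A^c)\times\L_X(B^c))\wedge^L_{A^c\times B^c}(M^c\times N^c)\simeq (\L_X(A^c)\wedge^L_{A^c}M^c)\times(\L_X(B^c)\wedge^L_{B^c}N^c)$, and applying the identity of the first paragraph once more to each factor yields $\L_X(A^c;M^c)\times\L_X(B^c;N^c)$. Unwinding the chain shows the composite is the map induced by the projections.

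The main obstacle is the model-categorical bookkeeping: one must check that $\L_X(A^c)$ is cofibrant enough as an $A^c$-algebra, i.e.\ that the basepoint inclusion $A^c\to\L_X(A^c)$ is a cofibration (which uses that the basepoint is a nondegenerate vertex), so that the derived smash products are computed correctly, and that the equivalence of Proposition~\ref{prop:product} is genuinely compatible with the relevant module structures and base changes and not merely an equivalence of underlying spectra. Alternatively one can bypass the identity of the first paragraph and instead repeat verbatim the dimension induction in the proof of Proposition~\ref{prop:product}, carrying the coefficient module at the basepoint throughout: the gluing step then rests on the relative pushout formula $\L_{X\cup_Z Y}(A;M)\simeq \L_X(A;M)\wedge^L_{\L_Z(A)}\L_Y(A)$ (a consequence of \cite[Proposition 2.10]{hhlrz} and homotopy invariance), and the case of a single $1$-cell uses the coefficiented form of Proposition~\ref{prop:THHsum}, which itself follows from Proposition~\ref{prop:THHsum} by the same device of pulling the module out of $\THH$.
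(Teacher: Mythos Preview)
Your argument is correct and is essentially the same as the paper's: the paper also pulls the coefficient module out via $\L_X(A^c;M^c)\simeq \L_X(A^c)\wedge_{A^c} M^c$, applies Proposition~\ref{prop:product}, and then splits the resulting smash product over $A^c\times B^c$ using the triviality of the cross actions. Your write-up is considerably more careful about the derived and model-categorical bookkeeping than the paper's terse proof, and your alternative induction route is not pursued there.
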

\begin{proof}
The argument in the proof of Proposition  \ref{prop:product} can be
adapted to pointed finite simplicial sets. We know that 
\[ \L_X((A \times B)^c; (M \times N)^c) \simeq \L_X((A \times B)^c)
  \wedge_{(A \times B)^c} ((M \times N)^c)\]
and by the result above this is equivalent to 
\[ (\L_X(A^c) \times \L_X(B^c)) \wedge^L_{A^c \times B^c} (M^c \times N^c)\]
Again, we can identify the coequalizers because the action of $A^c$ on $N^c$ and the
one of $B^c$ on $M^c$ is trivial and obtain
\[ (\L_X(A^c) \wedge_{A^c} M^c) \times (\L_X(B^c) \wedge_{B^c} N^c) \simeq
  \L_X(A^c;M^c) \times \L_X(B^c; N^c).\]
  
\end{proof}

\section{Applications} \label{sec:appl}

\subsection{$\THH^{[n]}(\Z/p^m;\Z/p)$} \label{subsec:thhnzmpzp}
This example was our original motivation for obtaining the splitting
result of Theorem~\ref{thm:Zp^m}. We apply it to the case where $R = \Z$,
$p$ is a prime, and $a= p^m$ for $m\geq 2$. As a special case
of Theorem \ref{thm:Zp^m} we obtain the following splitting. 
\begin{thm} \label{thm:thhnzpmzp}
\begin{align*}
\THH^{[n]}(\Z/p^m;\Z/p) &\simeq \THH^{[n]}(\Z;\Z/p) \wedge^L_{H\Z/p}
  \THH^{[n],\Z}(\Z/p^m;\Z/p) \\&\cong \THH^{[n]}(\Z;\Z/p) \wedge^L_{H\Z/p}
  \Sh^{[n],\Z}(\Z/p^m;\Z/p).
\end{align*}
\end{thm}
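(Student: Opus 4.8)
The plan is to recognize the statement as the specialization of Theorem~\ref{thm:Zp^m} to $R=\Z$, $p$ a prime, and $a=p^m$, so that the only work is verifying the hypotheses and rewriting the relative factor in terms of higher Shukla homology.

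First I would check that the triple $(\Z,p,p^m)$ meets the standing assumptions of Theorem~\ref{thm:Zp^m}: $\Z$ is a commutative ring, both $p$ and $p^m$ are nonzero and hence not zero divisors, $(p)$ is maximal since $\Z/p$ is a field, and $p^m=p^{m-2}\cdot p^2\in(p)^2$ because $m\geq 2$. Granting these, Theorem~\ref{thm:Zp^m} applies directly and gives
\[\THH^{[n]}(\Z/p^m;\Z/p)\simeq\THH^{[n]}(\Z;\Z/p)\wedge^L_{H\Z/p}\THH^{[n],\Z}(\Z/p^m;\Z/p),\]
which is the first line.

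For the second line I would identify $\THH^{[n],\Z}(\Z/p^m;\Z/p)=\L^{H\Z}_{S^n}(H\Z/p^m;H\Z/p)$ with $\Sh^{[n],\Z}(\Z/p^m;\Z/p)$. By definition $\widetilde{\HH}^{[n],\Z}(\Z/p^m;\Z/p)=\THH^{[n],\Z}(\Z/p^m;\Z/p)$, and Proposition~\ref{prop:same} provides an isomorphism $\Sh^{[n],k}_*(A;B)\cong\widetilde{\HH}^{[n],k}_*(A;B)$ for all $n\geq 0$, whose proof in fact produces an underlying equivalence of spectra. Substituting this identification into the first line yields the second.

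The point requiring the most care — and the reason one must pass through Shukla homology rather than Pirashvili's higher Hochschild homology of the discrete ring $\Z/p^m$ — is that $\Z/p^m$ is \emph{not} flat over $\Z$, so $\pi_*\THH^{[n],\Z}(\Z/p^m;\Z/p)$ is a genuinely derived invariant; Proposition~\ref{prop:same} is precisely what licenses rewriting it as $\Sh^{[n],\Z}(\Z/p^m;\Z/p)$. Beyond this bookkeeping there is no real obstacle, since Theorem~\ref{thm:Zp^m} has already carried out the essential argument.
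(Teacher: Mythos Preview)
Your proposal is correct and matches the paper's approach: the paper presents this theorem explicitly as a special case of Theorem~\ref{thm:Zp^m} with $R=\Z$, $p$ a prime, and $a=p^m$ for $m\geq 2$, and offers no further argument. Your verification of the hypotheses and your invocation of Proposition~\ref{prop:same} for the Shukla identification supply exactly the (brief) justification the paper leaves implicit.
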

This gives a direct calculation of $\THH_*^{[n]}(\Z/p^m;\Z/p)$ for all
$n$ because in \cite[Theorem 3.1]{dlr} we determine 
$\THH_*^{[n]}(\Z;\Z/p)$ as an iterated Tor-algebra $B^n_{\F_p}(x)
\otimes_{\F_p} B^{n+1}_{\F_p}(y)$ where $|x|= 2p$, $|y|=2p-2$,
$B^1_{\F_p}(z) = \F_p[z]$ and $B^n_{\F_p}(z) =
\Tor_*^{B^{n-1}_{\F_p}(z)}(\F_p, \F_p)$. We determined
$\Sh^{[n],\Z}(\Z/p^m;\Z/p)$ in Proposition \ref{prop:shuklazzp}. 

The case $n = 1$ was calculated in \cite{pirashvili}, \cite{brun} and
later as well in \cite{angeltveit}. 

\begin{rem}
Note that we cannot use the sequence 
canonical projection maps 
$$ \xymatrix@1{{\ldots} \ar[r] & {\Z/p^{m+1}\Z} \ar[r] & {\Z/p^m\Z}
  \ar[r] & {\ldots} \ar[r] & {\Z/p^2\Z} \ar[r] & {\Z/p\Z}}$$
in order to compare the groups $\THH(H\Z/p^m\Z; H\Z/p\Z)$ for varying
$m$ because the tensor factors coming from $\THH_*(H\Z; H\Z/p\Z)$ are mapped
isomorphically from $\THH(H\Z/p^{m+1}\Z; H\Z/p\Z)$ to $\THH(H\Z/p^m\Z;
H\Z/p\Z)$ whereas the tensor factor $\Sh^{[n],\Z}(\Z/p^{m+1};\Z/p)$ is
mapped via the augmentation map to $\Sh^{[n],\Z}(\Z/p^m;\Z/p)$ in each
step of the sequence.   
This is straightforward to see with the help of the explicit 
resolutions used in the proof of Lemma  \ref{lem:ap->pp}. 
\end{rem}

\subsection{Number rings} \label{subsec:numberrings}
As a warm-up we consider $R = \Z[i]$, $p = 1-i$ and $2 \in (p)^2$.  Then
we get that 
\[\THH^{[n]}(\Z[i]/2; \Z[i]/(1-i)) \simeq \THH^{[n]}(\Z[i];\Z[i]/(1-i))
  \wedge_{H\Z[i]/(1-i)} \Sh^{[n],\Z[i]}(\Z[i]/2;\Z[i]/(1-i)).\]
Note that $\Z[i]/(1-i) \cong \Z/2$ and $\Z[i]/2 \cong \F_2[x]/x^2$.
Thus we can calculate  
$$\THH^{[n]}(\Z[i]/2;\Z[i](1-i)) \cong \THH^{[n]}(\F_2[x]/x^2;\F_2)$$
using the flow chart in \cite{blprz} and we know from \cite[Theorem
4.3]{dlr} that $\THH_*^{[n]}(\Z[i];\Z[i]/(1-i))$  can also be computed using
iterated $\Tor$'s. 
The term
$\Sh^{[n],\Z[i]}(\Z[i]/2;\Z[i]/(1-i))$ can be computed as an iterated
$\Tor$ by Proposition \ref{prop:shuklazzp}. 
Thus all of the components of the above expression are known. What was
not known before is that $\THH^{[n]}(\Z[i]/2; \Z[i]/(1-i))$ splits in
the above manner. 

The general case is as follows: 
Consider $p\in \Z$ a prime, and let $K$ be a number field such that
$p$ is ramified 
in $\O_K$, with $p = \p_1^{e_1}\cdots \p_r^{e_r}$ with one $e_i >
1$. The Chinese Remainder Theorem let's us split $\O_K/p$ as a ring as 
$$ \O_K/p \cong \prod_{j=1}^r \O_K/\p_j^{e_j}$$ and $\O_K/\p_i$ as an
$\O_K/p$-module is then isomorphic to $0\times\ldots \times \O_k/\p_i
\times \ldots\times 0$ with the non-trivial component sitting in spot
number $i$. 
With Corollary \ref{cor:sums} we obtain the following
result. 

\begin{thm} 
\begin{equation} \label{eq:THHOK}
  \THH^{[n]}(\O_K/p;\O_K/\p_i) \simeq \THH^{[n]}(\O_K;\O/\p_i)
  \wedge_{H\O_K/\p_i} \THH^{[n],\O_K}(\O_K/p;\O_K/\p_i).
\end{equation}
\end{thm}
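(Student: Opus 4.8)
The plan is to reduce the statement to Theorem \ref{thm:Zp^m} (the splitting for $\THH^{[n]}(R/a;R/p)$) applied with $R = \O_K$, combined with the product-splitting Corollary \ref{cor:sums}. First I would observe that $\O_K$ is a Dedekind domain, hence for the fixed prime $\p_i$ with $e_i \geq 2$ the localization $(\O_K)_{\p_i}$ is a discrete valuation ring, so $(\O_K)_{\p_i}/p \cong \O_K/\p_i^{e_i}$; choosing a uniformizer $\pi$ we can write $p = u\pi^{e_i}$ for a unit $u$, so that in $(\O_K)_{\p_i}$ the element $p$ lies in $(\pi)^{e_i} \subseteq (\pi)^2$. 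Both $p$ and $\pi$ are nonzero and not zero divisors, and $(\pi)$ is maximal. Thus Theorem \ref{thm:Zp^m} applies (exactly as in the case $R = (\O_K)_{\p_i}$, $a = p$, ``$p$'' $=\pi$) to give
\[ \THH^{[n]}((\O_K)_{\p_i}/p; (\O_K)_{\p_i}/\pi) \simeq \THH^{[n]}((\O_K)_{\p_i}; (\O_K)_{\p_i}/\pi) \wedge^L_{H\O_K/\p_i} \THH^{[n],(\O_K)_{\p_i}}((\O_K)_{\p_i}/p; (\O_K)_{\p_i}/\pi). \]
Since $(\O_K)_{\p_i}/p \cong \O_K/p$ as rings (the other primes become invertible after localizing, so localization does not change the quotient by $p^{\text{?}}$ — more precisely $\O_K/\p_i^{e_i}$ is already local) and $(\O_K)_{\p_i}/\pi \cong \O_K/\p_i$, this localization step replaces $\O_K$ by a ring to which the theorem literally applies without changing any of the quotients appearing.

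The remaining issue is that the left-hand side of \eqref{eq:THHOK} is $\THH^{[n]}(\O_K/p; \O_K/\p_i)$ with $\O_K/p$ split as the product $\prod_{j=1}^r \O_K/\p_j^{e_j}$ and $\O_K/\p_i$ sitting as the module concentrated in the $i$-th factor. Here I would invoke Corollary \ref{cor:sums}: for a connected pointed finite simplicial set $X$ (in particular $X = S^n$), $\L_X((A\times B)^c; (M\times N)^c) \simeq \L_X(A^c; M^c) \times \L_X(B^c; N^c)$, and iterating over the $r$ factors of the product decomposition of $\O_K/p$ shows that
\[ \THH^{[n]}(\O_K/p; \O_K/\p_i) \simeq \THH^{[n]}(\O_K/\p_i^{e_i}; \O_K/\p_i), \]
since for $j \neq i$ the module is the zero module on the $j$-th factor and $\L_X(C; 0) \simeq 0$ for connected $X$. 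So the left side of \eqref{eq:THHOK} is identified with $\THH^{[n]}(\O_K/\p_i^{e_i}; \O_K/\p_i)$, which is exactly the left side of the localized splitting above.

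It then remains to identify the two factors on the right. The factor $\THH^{[n]}((\O_K)_{\p_i}; (\O_K)_{\p_i}/\pi)$ is $\THH^{[n]}(\O_K; \O_K/\p_i)$: this is a standard base-change/flatness argument, since $\O_K \to (\O_K)_{\p_i}$ is flat and the coefficients $\O_K/\p_i$ are unchanged, so the Loday constructions agree (one can also phrase this as $\L_{S^n}^{H\O_K}(H\O_K; H\O_K/\p_i) \simeq \L_{S^n}^{H(\O_K)_{\p_i}}(H(\O_K)_{\p_i}; H(\O_K)_{\p_i}/\pi)$ by the colimit description and flatness). The relative factor $\THH^{[n],(\O_K)_{\p_i}}((\O_K)_{\p_i}/p; (\O_K)_{\p_i}/\pi)$ is identified with $\THH^{[n],\O_K}(\O_K/p; \O_K/\p_i)$ by the same flat base change (all three rings $R$, $R/a$, $R/p$ localize compatibly, and Shukla homology $\Sh^{[n],R}(R/a;R/p)$, being built from iterated bar constructions and $\Tor$, is insensitive to the flat localization of the ground ring). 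Assembling these identifications yields exactly \eqref{eq:THHOK}.

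The main obstacle is the bookkeeping around the Chinese Remainder splitting and the localization: one must check carefully that $\O_K/p \cong (\O_K)_{\p_i}/p$ is compatible with the module structures and the projection maps so that Corollary \ref{cor:sums} applies with the correct cofibrant replacements, and that passing to the localization does not disturb the maps appearing in the splitting of Theorem \ref{thm:Zp^m}. Once the identifications of rings and modules are set up cleanly, every individual step is either a citation (Theorem \ref{thm:Zp^m}, Corollary \ref{cor:sums}) or a routine flat base-change argument; the real content has already been done in those earlier results.
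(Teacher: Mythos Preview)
Your approach is correct and coincides with the paper's: the paper's entire proof is the sentence ``With Corollary~\ref{cor:sums} we obtain the following result,'' which packages exactly the Chinese Remainder/product reduction together with Theorem~\ref{thm:Zp^m} that you have spelled out, and your explicit passage to the localization $(\O_K)_{\p_i}$ is the right way to land in the principal-maximal-ideal setting that Theorem~\ref{thm:Zp^m} literally requires. One small slip to correct: $(\O_K)_{\p_i}/p$ is isomorphic to $\O_K/\p_i^{e_i}$, not to all of $\O_K/p$ --- you say this correctly in the parenthetical, but the main clause should be fixed.
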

Again, $\O_K/p \cong (\O_K)_{\p_i}/p$ is isomorphic to
$\O_K/\p_i[\pi]/\pi^{e_i}$ where $\pi$ is the uniformizer, hence
$\O_K/\p_i[\pi]/\pi^{e_i} \cong \O_K/\p_i[x]/x^{e_i}$ so we can
calculate $\THH^{[n]}(\O_K/p;\O_K/\p_i)$ using Proposition
\ref{prop:reduced}. We can determine $\THH^{[n]}(\O_K;\O/\p_i)$ using
\cite[Theorem 4.3]{dlr} and we calculated
$\THH^{[n],\O_K}(\O_K/p;\O_K/\p_i)$ in Proposition 
\ref{prop:shuklazzp}. Using these calculations one can deduce right away
that there is a splitting on the level of homotopy groups of
$\THH^{[n]}_*(\O_K/p;\O_K/\p_i)$. But \eqref{eq:THHOK} yields a splitting
of  $\THH^{[n]}(\O_K/p;\O_K/\p_i)$ on the  level of augmented
commutative $H\O_K/\p_i$-algebra spectra.

\subsection{Galois descent} \label{subsec:gd}
In \cite[Definition 9.2.1]{rognes} John Rognes defines a map of
commutative $S$-algebras $f \colon A \ra B$ to be formally
$\THH$-\'etale, if the unit map  
$B \ra \THH^A(B)$ is a weak equivalence. Note that this  implies
that the augmentation map $\THH^A(B) \ra B$ that is induced by
multiplying all $B$-entries in $\THH^A(B)$ together, is also a weak
equivalence because the compositve $B \ra \THH^A(B) \ra B$ is the
identity on $B$. 

Therefore, applying the Brun juggling formula \ref{thm:brun} in this
case to $X = S^1$ we obtain
$$ \THH^{[2]}(A) \wedge^L_A B 
\simeq \THH^{[2]}(A; B) \simeq B \wedge_{\THH(B)} \THH^A(B) 
\simeq   B \wedge^L_{\THH(B)} B 
\simeq  \THH^{[2]}(B). $$

We can slightly generalize this: 
\begin{defn}
Let $X$ be a pointed simplicial set. A morphism $f \colon A \ra B$ is
formally  $X$-\'etale, if 
the unit map $B \ra \L_X^A(B)$ is a weak equivalence. 
\end{defn}

For formally $X$-\'etale morphisms $f \colon A \ra B$ the Brun
juggling formula \ref{thm:brun} for $X$ implies  
$$ \L_{\Sigma X}(A) \wedge^L_A B \simeq \L_{\Sigma X}(A; B) \simeq B
\wedge_{\L_X(B)} \L_X^A(B) \simeq   B \wedge_{\L_X(B)} B \simeq \L_{\Sigma X}(B). $$

This statement is related to Akhil Mathew's result \cite[Proposition
5.2]{mathew} where he shows that $\L_Y(A; B) \simeq \L_Y(B)$ if $f
\colon A \ra B$ is a faithful finite $G$-Galois extension and if $Y$
is a simply-connected pointed simplicial set. Such Galois
extensions are formally  $\THH$-\'etale by \cite[Lemma 9.2.6]{rognes}.

\subsection{Algebraic function fields over $\F_p$} \label{subsec:ff}
In several of our splitting formulas higher $\THH$ of the ground field
is a tensor factor. So far we have only considered prime fields or
rather simple-minded algebraic extensions of those. Topological
Hochschild homology groups of algebraic function
fields are an important class of examples. 

Let $L$ be an algebraic function field over $\F_p$. Then there is a 
transcendence basis $(x_1,\ldots, x_d)$ such that $L$ is a finite separable 
extension of $\F_p(x_1, \ldots, x_d)$ \cite[Theorem 9.27]{milne}. As 
separable extensions do not contribute anything substantial to topological 
Hochschild homology we obtain the following result:
\begin{thm} \label{thm:thhff}
Let $L$ be an algebraic function field over $\F_p$, then 
$$ \THH(L)_*  \cong 
L \otimes_{\F_p} \THH_*(\F_p) \otimes_{\F_p}  
\Lambda_{\F_p}(\varepsilon x_1, \ldots, \varepsilon x_d).
$$ 
\end{thm}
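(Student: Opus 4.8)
The plan is to reduce the computation of $\THH_*(L)$ to known cases via a transcendence basis and separability. First I would invoke the cited structure theorem \cite[Theorem 9.27]{milne}: there is a transcendence basis $(x_1,\ldots,x_d)$ so that $L/\F_p(x_1,\ldots,x_d)$ is finite separable. The strategy is to handle the purely transcendental part and the finite separable part separately. For the separable extension $\F_p(x_1,\ldots,x_d) \to L$, the point is that this map is formally $\THH$-étale in the sense of Subsection \ref{subsec:gd} (separable field extensions are étale, so $\THH^{\F_p(x_1,\ldots,x_d)}(L) \simeq L$), and hence by base change $\THH(L) \simeq \THH(\F_p(x_1,\ldots,x_d)) \wedge^L_{H\F_p(x_1,\ldots,x_d)} HL$; on homotopy groups this is just $-\otimes_{\F_p(x_1,\ldots,x_d)} L$ since everything in sight is a field. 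So it suffices to compute $\THH_*(\F_p(x_1,\ldots,x_d))$.

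Next I would compute $\THH$ of the rational function field $F := \F_p(x_1,\ldots,x_d)$. The cleanest route is: $F$ is the localization $\F_p[x_1,\ldots,x_d]_S$ where $S$ is the multiplicative set of nonzero elements, and $\THH$ commutes with localization (étale/smooth localizations don't change relative $\THH$ over $\F_p$, or one uses that $\THH^{\F_p}(F) \simeq \HH^{\F_p}(F)$ by flatness and Hochschild homology of a localization is the localization of Hochschild homology, combined with the Künneth-type splitting $\THH(F) \simeq \THH(\F_p)\wedge_{H\F_p}\THH^{H\F_p}(HF)$ coming from $F$ being a pointed-monoid-type algebra or simply from $\THH(F)\simeq \THH(\F_p)\wedge_{H\F_p} HH^{\F_p}(F)$ as in \cite{ll2}). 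Then $\HH^{\F_p}_*(F)$ is computed from $\HH^{\F_p}_*(\F_p[x_1,\ldots,x_d]) \cong \F_p[x_1,\ldots,x_d]\otimes \Lambda_{\F_p}(\varepsilon x_1,\ldots,\varepsilon x_d)$ by localizing, giving $F \otimes_{\F_p} \Lambda_{\F_p}(\varepsilon x_1,\ldots,\varepsilon x_d)$ with $|\varepsilon x_i|=1$. Feeding this back through the separable base change yields
\[ \THH_*(L) \cong L \otimes_{\F_p} \THH_*(\F_p) \otimes_{\F_p} \Lambda_{\F_p}(\varepsilon x_1,\ldots,\varepsilon x_d), \]
as claimed.

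The main obstacle I expect is making the two reduction steps rigorous at the spectrum level rather than just on homotopy groups: (1) checking that a finite separable extension of fields really gives a formally $\THH$-étale map of Eilenberg--Mac Lane spectra — this should follow because $\THH^{\F_p}_*(A;A)$ for $A$ smooth (indeed étale) over $\F_p$ is concentrated in degree zero, i.e.\ $\pi_*\THH^{HF}(HL)=HH^F_*(L)=L$ by formal étaleness of the algebraic extension, so the unit $HL\to \THH^{HF}(HL)$ is an equivalence; and (2) justifying that $\THH$ commutes with the localization $\F_p[x_1,\ldots,x_d]\to F$ — here the safe argument is that $F$ is flat over $\F_p$ so $\pi_*\THH^{\F_p}(F) = HH^{\F_p}_*(F)$, and Hochschild homology commutes with localization by a standard flatness/colimit argument (localization is a filtered colimit of free modules, and the Hochschild complex commutes with filtered colimits), after which the splitting $\THH(F)\simeq \THH(\F_p)\wedge^L_{H\F_p}HH^{\F_p}(F)$ follows from the corresponding statement for the polynomial ring together with compatibility with localization. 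Everything else is bookkeeping with the degree conventions $|\varepsilon x_i|=1$.
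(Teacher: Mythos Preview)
Your proposal is correct and follows the same route as the paper: reduce via the finite separable extension and then via the localization $\F_p[x_1,\ldots,x_d]\to \F_p(x_1,\ldots,x_d)$ to the polynomial ring, and compute there. The paper handles both reduction steps at the spectrum level by invoking McCarthy--Minasian \'etale descent \cite[5.5, 5.6]{mccm} (rather than arguing formal $\THH$-\'etaleness and $\HH$-localization by hand), and then uses the Hesselholt--Madsen monoid-ring splitting \cite[Theorem 7.1]{hm} for $\THH(\F_p[x_1,\ldots,x_d])$; these references dispose precisely of the two obstacles you anticipated.
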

\begin{proof}
McCarthy and Minasian show in \cite[5.5, 5.6]{mccm} that $\THH$ has
\'etale descent in our case. Therefore 
\begin{align*}
\THH(L) & \simeq HL \wedge^L_{H\F_p(x_1, \ldots, x_d)} \THH(\F_p(x_1,
\ldots, x_d)) \\
&  \simeq HL \wedge^L_{H\F_p(x_1, \ldots, x_d)} H\F_p(x_1,
\ldots, x_d) \wedge^L_{H\F_p[x_1, \ldots, x_d]} \THH(\F_p[x_1, \ldots,
x_d]) \\
& \simeq HL \wedge^L_{H\F_p[x_1, \ldots, x_d]} \THH(\F_p[x_1, \ldots,
x_d]).
\end{align*}
But the topological Hochschild homology of monoid rings is known by
\cite[Theorem 7.1]{hm} and hence $\pi_*\THH(\F_p[x_1, \ldots,
x_d]) \cong \THH_*(\F_p) \otimes_{\F_p} \HH_*(\F_p[x_1, \ldots,
x_d])$. As 
$$\HH_*(\F_p[x_1, \ldots, x_d]) \cong \F_p[x_1, \ldots,
x_d] \otimes_{\F_p}  
\Lambda_{\F_p}(\varepsilon x_1, \ldots, \varepsilon x_d)$$
we get the result. 
\end{proof}
McCarthy and Minasian actually show more in \cite[5.5,5.6]{mccm}, and
we can adapt the above  
proof to a more general situation. 
\begin{thm} 
Let $X$ be a connected pointed simplicial set $X$. Then 
$$\L_X(HL)
\simeq HL \wedge^L_{H\F_p[x_1, \ldots, x_d]} \L_X(\F_p[x_1, \ldots,
x_d]).$$ 
\end{thm}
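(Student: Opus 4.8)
The plan is to mimic the proof of Theorem~\ref{thm:thhff}, replacing $\THH=\L_{S^1}$ throughout by $\L_X$ for an arbitrary connected pointed simplicial set $X$, and to isolate exactly where connectedness of $X$ is used. The three ingredients in the $\THH$-case are: (i) $\THH$ satisfies étale descent, so $\THH(L)\simeq HL\wedge^L_{H\F_p(x_1,\dots,x_d)}\THH(\F_p(x_1,\dots,x_d))$ and likewise for the separable extension $\F_p(x_1,\dots,x_d)/\F_p(x_1,\dots,x_d)$; (ii) the base-change/transitivity formula $\L_X(B)\wedge^L_{HB}HC\simeq\L_X^{HB}(HC)$ — really Lemma~\ref{lem:coequ} or the standard fact that the Loday construction commutes with base change — lets one rewrite $\L_X^{H\F_p[x_1,\dots,x_d]}(\text{localization})$; and (iii) the localization $\F_p(x_1,\dots,x_d)$ is a filtered colimit of (smooth, hence étale-contributing-nothing) étale extensions of the polynomial ring, and $\L_X$ commutes with filtered colimits. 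So first I would recall McCarthy–Minasian \cite[5.5, 5.6]{mccm}: they prove not just étale descent for $\THH$ but the stronger statement that $\L_X$ has étale descent for all connected $X$ (this is precisely the ``more'' alluded to in the sentence preceding the theorem). Granting that, the whole argument becomes formal.

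Concretely, I would write, for $L$ a finite separable extension of the rational function field $F:=\F_p(x_1,\dots,x_d)$ and $P:=\F_p[x_1,\dots,x_d]$:
\[
\L_X(HL)\simeq HL\wedge^L_{HF}\L_X(HF)\simeq HL\wedge^L_{HF}\bigl(HF\wedge^L_{HP}\L_X(HP)\bigr)\simeq HL\wedge^L_{HP}\L_X(HP),
\]
where the first equivalence is étale descent for $\L_X$ applied to the finite separable (hence étale) extension $L/F$, the second is étale descent for $\L_X$ applied to the étale (in the derived sense) map $P\to F$ — $F$ being a filtered colimit of localizations $P[1/f]$, each étale over $P$ — combined with the fact that $\L_X$ commutes with the filtered colimit defining $F$, and the third collapses the iterated base change $HL\wedge^L_{HF}HF\simeq HL$. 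The connectedness hypothesis on $X$ is exactly what McCarthy–Minasian need for their descent statement (and is genuinely necessary: for disconnected $X$ the Loday construction of a product splits off extra factors, cf. Section~\ref{sec:sums}), so it must be carried as a standing assumption.

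The main obstacle is verifying that the McCarthy–Minasian results really do give étale descent for $\L_X$ with general connected $X$, not merely for $\THH=\L_{S^1}$; their paper is phrased for $\THH$, and one has to check that their geometric/Galois-descent argument only uses connectedness of the indexing simplicial set and not the specific homotopy type of $S^1$. If that is clean — which the parenthetical remark ``McCarthy and Minasian actually show more'' signals it is — then nothing else in the proof is delicate: the base-change identity for $\L_X$ and its compatibility with filtered colimits are standard (the latter is noted already in Section~\ref{sec:loday}, where an arbitrary pointed simplicial set is written as a colimit of its finite subcomplexes and $\L_X$ as the corresponding colimit, though here one wants colimits in the ring-spectrum variable, which is equally routine since smash products commute with colimits). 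So the proof is essentially two lines once the descent input is pinned down, and I would write it that way, citing \cite[5.5, 5.6]{mccm} for the descent step and Lemma~\ref{lem:coequ} for the base-change step.
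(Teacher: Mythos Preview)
Your proposal is correct and follows exactly the route the paper intends: the paper does not give a separate proof for this theorem but simply remarks that McCarthy--Minasian \cite[5.5, 5.6]{mccm} actually prove \'etale descent for $\L_X$ with $X$ connected, and that the proof of Theorem~\ref{thm:thhff} then adapts verbatim. Your chain of equivalences $\L_X(HL)\simeq HL\wedge^L_{HF}\L_X(HF)\simeq HL\wedge^L_{HP}\L_X(HP)$ is precisely that adaptation.
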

The Loday construction on pointed monoid algebras satisfies a
splitting of the form 
$$ \L_X(H\F_p[\Pi_+]) \simeq \L_X(H\F_p) \wedge_{H\F_p}
\L_X^{H\F_p}(H\F_p[\Pi_+]), $$
see \cite[Theorem 7.1]{hm}. Therefore $\L_X(\F_p[x_1, \ldots,
x_d])$ splits as $\L_X(H\F_p) \wedge_{H\F_p}
\L_X^{H\F_p}(H\F_p[x_1,\ldots, x_d])$. 

In particular, for $X = S^n$ we get an explicit formula for
$\THH^{[n]}(L)$: 
\begin{cor}
For all $n \geq 1$: 
$$ \THH^{[n]}(L) \simeq HL \wedge^L_{H\F_p[x_1, \ldots, x_d]}
(\THH^{[n]}(\F_p) \wedge_{H\F_p} \THH^{[n], \F_p}(\F_p[x_1, \ldots,
x_d])) .$$
\end{cor}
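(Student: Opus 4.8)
The plan is to specialize to $X = S^n$ the two statements established immediately above in this subsection. For $n \geq 1$ the pointed simplicial set $S^n$ is connected, so the preceding theorem applies with $X = S^n$ and, since $\L_{S^n}(HL) = \THH^{[n]}(L)$ by definition, gives
$$\THH^{[n]}(L) \simeq HL \wedge^L_{H\F_p[x_1,\ldots,x_d]} \L_{S^n}(\F_p[x_1,\ldots,x_d]).$$

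It then remains only to rewrite the second smash factor. Since $\F_p[x_1,\ldots,x_d]$ is the pointed monoid algebra $\F_p[\Pi_+]$ with $\Pi = \mathbb{N}^d$ the free commutative monoid on $d$ generators, the splitting for Loday constructions of pointed monoid algebras recalled just above (namely \cite[Theorem 7.1]{hm}, which gives $\L_X(H\F_p[\Pi_+]) \simeq \L_X(H\F_p) \wedge_{H\F_p} \L_X^{H\F_p}(H\F_p[\Pi_+])$), applied with $X = S^n$ yields
$$\L_{S^n}(\F_p[x_1,\ldots,x_d]) \simeq \THH^{[n]}(\F_p) \wedge_{H\F_p} \THH^{[n],\F_p}(\F_p[x_1,\ldots,x_d]),$$
where we use $\L_{S^n}^{H\F_p}(H\F_p[x_1,\ldots,x_d]) = \THH^{[n],\F_p}(\F_p[x_1,\ldots,x_d])$ from the definition of higher $\THH$. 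Substituting this equivalence into the previous one produces the claimed formula.

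There is essentially no obstacle here beyond bookkeeping: the base change $HL \wedge^L_{H\F_p[x_1,\ldots,x_d]}(-)$ is left on the outside exactly as in the statement, so no compatibility of module structures beyond what is already packaged into the two cited results needs to be checked. The only mild point of care is that the étale-descent input of McCarthy--Minasian \cite[5.5, 5.6]{mccm} that underlies the preceding theorem is used there and not re-examined here, and that $n \geq 1$ is needed precisely to ensure $S^n$ is connected so that the theorem is applicable.
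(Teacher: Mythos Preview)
Your proposal is correct and matches the paper's own argument: the corollary is stated without separate proof precisely because it is the specialization $X=S^n$ of the preceding theorem together with the monoid-algebra splitting recalled just before it, exactly as you lay out. Your remark that $n\geq 1$ is needed so that $S^n$ is connected (hence the theorem applies) is the only point that bears mentioning, and you have it.
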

Recall that we know
\begin{align*}
\pi_*\THH^{[n], \F_p}(\F_p[x_1, \ldots, x_d]) & = \HH^{[n], \F_p}_*(\F_p[x_1, \ldots,
x_d]) \\ 
 & \cong  \HH^{[n], \F_p}_*(\F_p[x]^{\otimes_{\F_p} d}) \\
 & \cong \HH^{[n], \F_p}_*(\F_p[x])^{\otimes_{\F_p} d}
\end{align*}
and we determined $\HH^{[n], \F_p}_*(\F_p[x])$ in 
\cite[Theorem 8.6]{blprz}. 

\begin{rem}
Topological Hochschild homology of $L$ considers $HL$ as an $S$-algebra 
and this allows us to consider $L$ over the prime field. 
The Hochschild homology of an algebraic function field $L$ over a 
general field $K$ was for instance determined in \cite[Corollary 5.3]{BK} and 
is more complicated. 
\end{rem}
\begin{rem}
Note that Theorem \ref{thm:thhff} contradicts the statement of \cite[Remark
7.2]{greenlees}. In that remark, it is crucial to assume that one works
in an \emph{augmented} setting; in the above situation, this is not the case. 
\end{rem}

\subsection{$\THH^{[n]}(\Z/p^m)$} \label{subsec:thhnzpm}
We close with the open problem of computing $\THH^{[n]}(\Z/p^m)$ for
higher $n$. 

The juggling formula \ref{thm:juggling} applied to the sequence $S \ra
H\Z \ra H\Z/p^m = H\Z/p^m$ for $m \geq 2$ yields the equivalence 
$$ \THH^{[n]}(\Z/p^m) \simeq \THH^{[n]}(\Z; \Z/p^m)
\wedge^L_{\THH^{[n-1],\Z}(H\Z/p^m)} H\Z/p^m.  $$ 
Up to $n = 2$ we know $\THH_*^{[n]}(H\Z)$: The case $n=1$ is
B\"okstedt's calculation \cite{boekstedt} and $n=2$ is \cite[Theorem
2.1]{dlr2}. Therefore we can determine $\THH_*^{[n]}(\Z; \Z/p^m)$ up
to $n=2$. As
$p^m$ is regular in $\Z$,  
$$\THH_*^\Z(\Z/p^m) = \Sh_*^\Z(\Z/p^m) \cong \Gamma_{\Z/p^m}(x_2)$$
with $|x_2|= 2$. If we could determine the right
$\Sh_*^\Z(\Z/p^m)$-module structure on $\THH_*^{[2]}(\Z; \Z/p^m)$,
then this would allow us to calculate the $E^2$-term of the K\"unneth
spectral sequence for $\THH_*^{[2]}(\Z/p^m)$, 
\[ E^2_{p,q} = \text{Tor}_{p,q}^{\Gamma_{\Z/p^m}(x_2)}(\THH_*^{[2]}(\Z; \Z/p^m),
\Z/p^m) \Rightarrow \THH_*^{[2]}(\Z/p^m). \]

\end{document}